\newcommand{\esssup}{{\rm ess} \sup}
\newcommand{\essinf}{{\rm ess} \inf}
\newtheorem{theorem}{Theorem}[section]
\newtheorem{remark}{Remark}[section]
\newtheorem{lemma}[theorem]{Lemma}
\newtheorem{definition}{Definition}[section]
\newtheorem{proposition}[theorem]{Proposition}
\numberwithin{equation}{section}
\begin{document}

\title[the compressible Euler equations]
	{Decay of solutions of isentropic gas dynamics for large data}
\author{Naoki Tsuge}
\address{Department of Mathematics Education, 
Faculty of Education, Gifu University, 1-1 Yanagido, Gifu
Gifu 501-1193 Japan.}
\email{tuge@gifu-u.ac.jp}
\thanks{
N. Tsuge's research is partially supported by Grant-in-Aid for Scientific 
Research (C) 17K05315, Japan.
}
\keywords{The Compressible Euler Equation, isentropic gas dyanmics, the compensated compactness, the Godunov scheme, global attractor, decay estimates.}
\subjclass{Primary 
35L03, 
35L65, 
35Q31, 
76N10,
76N15; 
Secondary
35A01, 
35B35,   
35B50, 
35L60,   
76H05,   
76M20.   
}
\date{}

\maketitle

\begin{abstract}
In this paper, we are concerned with the Cauchy problem for 
isentropic gas dynamics. Through the contribution of many 
researchers such as Lax, P. D., Glimm, J., DiPerna, R. J. and Liu, T. P., 
the decay of solutions was established. They treated with initial 
data with the small total variation. On the other hand, the decay for large initial data 
has been open for half a century. 
Our goal is to provide a new method to analyze this problem. 
We prove the existence of a global attractor, which 
yields a decay of solutions for large data. To construct approximate solutions, we 
introduce a modified Godunov scheme.
\end{abstract}


\section{Introduction}
The present paper is concerned with isentropic gas dynamics
\begin{align}
\begin{cases}
\displaystyle{\rho_t+m_x=0,}\\
\displaystyle{m_t+\left(\frac{m^2}{\rho}+p(\rho)\right)_x
	=0,}
\end{cases}x\in{\bf R},\quad t\in{\bf R}_+,
\label{Euler}
\end{align}
where $\rho$, $m$ and $p$ are the density, the momentum and the 
pressure of the gas, respectively. If $\rho>0$, 
$v=m/\rho$ represents the velocity of the gas. For a barotropic gas, 
$p(\rho)=\rho^\gamma/\gamma$, where $\gamma\in(1,5/3]$ is the 
adiabatic exponent for usual gases.

We consider the initial value problem (\ref{Euler}) 
with the initial  data
\begin{align}  
(\rho,m)|_{t=0}=(\rho_0(x),m_0(x)),
\label{I.C.}
\end{align}
where 
\begin{align}
(\rho_0(x),m_0(x))=(\bar{\rho},0)\text{ outside a finite interval} 
\label{initial bar rho}
\end{align}
and 
$\bar{\rho}$ is a positive constant. 
The above problem \eqref{Euler}--\eqref{I.C.} can be written in the  following form 
\begin{align}\left\{\begin{array}{lll}
u_t+f(u)_x=0,\quad{x}\in{\bf R},\quad t\in{\bf R}_+,\\
u|_{t=0}=u_0(x),
\label{IP}
\end{array}\right.
\end{align}
by using  $u={}^t(\rho,m)$, $\displaystyle f(u)={}^t\!\left(m, \frac{m^2}{\rho}+p(\rho)\right)$.

We recollect the known results of the above problem. {DiPerna} \cite{D2}
 proved the global existence of solutions to   
 \eqref{Euler}--\eqref{I.C.} by the vanishing viscosity method and a compensated compactness argument.  
{DiPerna} first applied the method  to \eqref{Euler} for 
the special case where $\gamma=1+2/n$ and $n$ is an odd
integer. We notice that this result can treat with the arbitrary $L^{\infty}$ data. Subsequently, {Ding}, {Chen} and {Luo} \cite{DC1} 
and {Chen} \cite{C1} and \cite{C2} extended his analysis to any $\gamma$ in $(1,5/3]$.

On the other hand, the existence of solutions to conservation laws including \eqref{Euler} was established by Glimm \cite{G}. Glimm treatd the Cauchy problem 
with initial data having small total variation. 
The theory of decay for genuinely nonlinear 2$\times$2 systems of conservation laws was constructed by Glimm-Lax \cite{GL}. Glimm-Lax showed that if initial data are constant outside a finite interval and have locally bounded total variation and small oscillation, then the tonal 
variation of the solution of \cite{G} decays to zero at the rate $t^{-1/2}$. 
The Glimm-Lax theory had been further developed by DiPerna and Liu: 
general conservation laws with a convex entropy function \cite{D1}, 
general conservation laws with small initial data in total variation \cite{L}. However, the decay for large initial data has been open for half a century. Our goal in the present paper is to provide a new method to analyze this problem and investigate the decay structure of \eqref{Euler}. We first introduce a modified Godunov scheme
to construct approximate solutions. We next prove the existence of a global attractor, which 
yields decay estimates of solutions for large initial data.

To state our main theorem, we define the Riemann invariants $w,z$, which play important roles
in this paper, as
\begin{definition}
	\begin{align*}
	w(u):=\frac{m}{\rho}+\frac{\rho^{\theta}}{\theta}=v+\frac{\rho^{\theta}}{\theta},
	\quad{z}(u):=\frac{m}{\rho}-\frac{\rho^{\theta}}{\theta}
	=v-\frac{\rho^{\theta}}{\theta}\quad
	\left(\theta=\frac{\gamma-1}{2}\right).
	\end{align*}
\end{definition}
These Riemann invariants satisfy the following.
\begin{remark}\label{rem:Riemann-invariant}
	\begin{align}
	&|w|\geq|z|,\;w\geq0,\;\mbox{\rm when}\;v\geq0.\quad
	|w|\leq|z|,\;z\leq0,\;\mbox{\rm when}\;v\leq0.
    \nonumber\\
	&v=\frac{w+z}2,
	\;\rho=\left(\frac{\theta(w-z)}2\right)^{1/\theta},\;m=\rho v.
\label{relation-Riemann}
	\end{align}From the above, the lower bound of $z$ and the upper bound of $w$ yield the bound of $\rho$ and $|v|$.
\end{remark}

We next introduce the mechanical energy as $\displaystyle 
\eta_{\ast}(u)=\frac12\frac{m^2}{\rho}+\frac1{\gamma(\gamma-1)}\rho^{\gamma}$ and set 
\begin{align}
J(u)=\eta_{\ast}(u)
-\frac{\left(\bar{\rho}\right)^{\gamma-1}}{\gamma-1}\rho
+\dfrac{\left(\bar{\rho}\right)^{\gamma}}{\gamma}.
\label{eqn:J}
\end{align}

\begin{remark}From the convexity of $\eta_{\ast}$, we have 
\begin{align}\begin{split}
J(u)=\eta_{\ast}(u)-\eta_{\ast}(\bar{\rho},0)
-\dfrac{\partial\eta_{\ast}}{\partial \rho}(\bar{\rho},0)(\rho-\bar{\rho})-\dfrac{\partial\eta_{\ast}}{\partial m}
(\bar{\rho},0)m
\geq0.
\end{split}
\end{align}
From the conservation of mass and the energy inequality, 
we have 
\begin{align}\begin{split}
0\leq\int^x_{-\infty}J(u(y,t))dy\leq\int^{\infty}_{-\infty}J(u(x,t))dx
\leq\int^{\infty}_{-\infty}J(u_0(x))dx.
\end{split}
\label{eta}
\end{align}
\end{remark}Moreover, we define the entropy weak solution.
\begin{definition}
A measurable function $u(x,t)$ is called a global {\it entropy weak solution} of the 
Cauchy problems \eqref{IP} if 
\begin{align*}
	\int^{\infty}_{-\infty}\int^{\infty}_0u\phi_t+f(u)\phi_x dxdt
	+\int^{\infty}_{-\infty}u_0(x)\phi(x,0)dx=0
\end{align*}
holds for any test function $\phi\in C^1_0({\bf R}\times{\bf R}_+)$ and 
\begin{align*}
	\int^{\infty}_{-\infty}\int^{\infty}_0\hspace{-1ex}&\eta(u)\psi_t+q(u)\psi_x+\int^{\infty}_{-\infty}\hspace{-1ex}\eta(u_0(x))\psi(x,0)dx
	\geq0
\end{align*}
holds for any non-negative test function $\psi\in C^1_0({\bf R}\times{\bf R}_+)$, where 
$(\eta,q)$ is a pair of convex entropy--entropy flux of \eqref{Euler}.
\end{definition}
Finally, we define $\tilde{z},\tilde{w}$ by
\begin{align}
	\begin{split}
		\tilde{z}(x,t)=z(x,t)-\int^x_{-\infty}J(u(y,t))dy,\;\;
\tilde{w}(x,t)=w(x,t)-\int^x_{-\infty}J(u(y,t))dy.
	\end{split}
	\label{transformation}
\end{align}

Then, our main theorem is as follows.
\begin{theorem}\label{thm:main}
We assume that 
\begin{align}
\begin{split}
\rho_0(x)\geq0\quad{a.e.\ }x\in{\bf R},\quad \rho_0\in L^{\infty}({\bf R}),\quad\dfrac{m_0}{\rho_0}\in L^{\infty}({\bf R}).
\end{split}
\label{maintheorem1}
\end{align}

Then, there exists a global entropy weak solution of the 
Cauchy problems \eqref{IP}. Moreover, for any positive constant $\varepsilon$, 
there exist positive constants $t_0$ such that the solution satisfies 
\begin{align}
\begin{split}
&-\dfrac{\left(\bar{\rho}\right)^{\theta}}{\theta}-E_0-\varepsilon\leq \tilde{z}(x,t),\quad \tilde{w}(x,t)\leq \dfrac{\left(\bar{\rho}\right)^{\theta}}{\theta}+\varepsilon,\quad \rho(x,t)\geq0,\\
&{a.e.\ }(x,t)\in{\bf R}\times[t_0,\infty),
\end{split}
\label{maintheorem2}
\end{align}where 
\begin{align}
E_0=\int^{\infty}_{-\infty}J(u_0(x))dx.
\label{E_0}
\end{align} 
\end{theorem}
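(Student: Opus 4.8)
The plan is to construct approximate solutions by the modified Godunov scheme, to derive for them a quantitative decay estimate on the transformed Riemann invariants $\tilde z,\tilde w$ of \eqref{transformation}, and then to pass to the limit by compensated compactness. First I would fix a space--time mesh, take cell averages of $u_0$ as the initial approximation, solve the Riemann problem at each cell interface, and define the next time level by averaging the exact Riemann solution over each cell; the modification of the scheme serves to keep $\rho\ge0$ (vacuum included) and to make the discrete mechanical--energy balance exact. The first a priori input is the invariant--region bound: across each Riemann fan, and under the cell averaging, $w$ does not exceed and $z$ does not fall below the corresponding extreme values, so the approximate solutions obey uniform $L^\infty$ bounds with $\rho\ge0$ and $|w|,|z|\le C$, where $C$ depends only on the data through \eqref{maintheorem1}. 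Simultaneously I would record the discrete energy inequality, which makes the discrete analogue of $\int J(u^\Delta(\cdot,t))\,dx$ nonincreasing in $t$ and bounded by $E_0$; this is what puts the correction $\Phi(x,t):=\int_{-\infty}^x J(u(y,t))\,dy\in[0,E_0]$ at our disposal, with the monotonicity recorded in \eqref{eta}.

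The heart of the argument is a discrete maximum principle for $\tilde w$ and $\tilde z$. The reason for subtracting the same increasing function $\Phi$ from both $w$ and $z$ is that, at each Godunov step, the error committed when one replaces $w$ (resp. $z$) of the cell average by the average over the cell of $w$ (resp. $z$) --- the projection error intrinsic to the scheme --- is dominated by the local loss of energy, that is, by the increment of $\Phi$ across the cell. Consequently $\sup_x\tilde w^\Delta(\cdot,t_{n+1})\le\sup_x\tilde w^\Delta(\cdot,t_n)$ and $\inf_x\tilde z^\Delta(\cdot,t_{n+1})\ge\inf_x\tilde z^\Delta(\cdot,t_n)$, up to errors that are summable over all steps with total controlled by $E_0$. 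Using that $\Phi$ is confined to $[0,E_0]$ and that the far--field state $(\bar\rho,0)$ forces $w=\bar\rho^\theta/\theta$ and $z=-\bar\rho^\theta/\theta$ at $x=\pm\infty$, I would then show that the surplus of $\sup_x\tilde w^\Delta$ above $\bar\rho^\theta/\theta$, and the deficit of $\inf_x\tilde z^\Delta$ below $-\bar\rho^\theta/\theta-E_0$, must be exhausted in finite time: for every $\varepsilon>0$ there is a $t_0$, independent of $\Delta$, after which the approximate solutions already satisfy \eqref{maintheorem2}. This uniformity in $\Delta$ is precisely what lets the estimate survive the passage to the limit.

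Finally I would extract convergence. The uniform bounds place the entropy dissipation measures of $u^\Delta$ in a compact subset of $H^{-1}_{\mathrm{loc}}$, so by the div--curl lemma together with the reduction of the associated Young measure to a Dirac mass --- available for every $\gamma\in(1,5/3]$ through DiPerna \cite{D2}, Ding--Chen--Luo \cite{DC1} and Chen \cite{C1,C2} --- a subsequence of $u^\Delta$ converges a.e. to an entropy weak solution of \eqref{IP}. Since the estimates in \eqref{maintheorem2} are one--sided pointwise bounds on $\tilde z,\tilde w$, and since $\int_{-\infty}^x J(u^\Delta(y,t))\,dy\to\int_{-\infty}^x J(u(y,t))\,dy$ by the uniform bounds and $E_0$--tail control, the inequalities pass to the a.e. limit and yield the claimed decay. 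I expect the main obstacle to be the second step: proving at the discrete level that the Godunov projection error in the Riemann invariants is dominated by the energy increment of $\Phi$, and then propagating this through the time stepping with a bound on $t_0$ uniform in the mesh, so that one $t_0$ serves all $\Delta$.
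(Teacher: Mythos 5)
There is a genuine gap at the heart of your argument: the step where you claim that the surplus of $\sup_x\tilde w^\Delta$ above $\bar{\rho}^{\theta}/\theta$ ``must be exhausted in finite time.'' The only dynamical input you provide is a discrete maximum principle asserting that $\sup_x\tilde w^\Delta(\cdot,t_n)$ is nonincreasing and $\inf_x\tilde z^\Delta(\cdot,t_n)$ is nondecreasing, up to summable errors. A monotone bounded sequence need not reach any prescribed level in finite time, let alone at a time $t_0$ uniform in the mesh; it could stall at a limit strictly above $\bar{\rho}^{\theta}/\theta+\varepsilon$. Nothing in your proposal produces a quantitative rate of decrease, and the boundedness of $\Phi=\int_{-\infty}^{x}J\,dy$ in $[0,E_0]$ together with the far-field state $(\bar\rho,0)$ controls only the size of the correction, not the speed at which the extrema move.

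The missing idea is the actual decay mechanism of the paper. Combining the mass equation with the energy identity $(\eta_\ast)_t+(q_\ast)_x=0$, the transformed invariants satisfy inhomogeneous transport equations $\tilde z_t+\lambda_1\tilde z_x=g_1(u)$, $\tilde w_t+\lambda_2\tilde w_x=g_2(u)$ with the explicit sources \eqref{inhomo2}, and Proposition \ref{proposition} shows that $g_1>0$ whenever $\tilde z<-\bar{\rho}^{\theta}/\theta-E_0$ and $g_2<0$ whenever $\tilde w>\bar{\rho}^{\theta}/\theta$, with equality only at $\rho=\bar\rho$. This yields the uniform gap \eqref{delta}: $g_2\le-2\delta$ on the set $\tilde w\ge\bar{\rho}^{\theta}/\theta+\varepsilon/2$, so $\hat w=\tilde w+\delta t$ admits an invariant region and $\tilde w$ is driven down at rate at least $\delta$ per unit time, giving the explicit, mesh-independent $t_0=(M_0-\bar{\rho}^{\theta}/\theta-\varepsilon)/\delta$. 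Your projection-error-versus-energy-dissipation estimate corresponds to the control of the functional $L_n$ (Lemma \ref{lem:error}) and is needed for consistency, but it is not where the decay comes from. Relatedly, the paper's scheme does not simply average exact Riemann solutions: inside each cell the approximate solution is a genuinely $(x,t)$-dependent perturbation of the Riemann fan built precisely to carry the terms $g_1,g_2$ and $\int^x J$ (see \eqref{appro1}, \eqref{appr-k}), which is what makes the formal source-term argument survive discretization. Without identifying $g_1,g_2$ and their strict sign outside the target region, the finite-time conclusion cannot be reached.
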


\begin{remark}We remark some points for the above theorem.

In view of \eqref{initial bar rho}, we find that 
$
-\dfrac{\left(\bar{\rho}\right)^{\theta}}{\theta}-E_0\geq\essinf_x\left(\tilde{z}(x,0)\right),\;
\esssup_x\left(\tilde{w}(x,0)\right)\geq\dfrac{\left(\bar{\rho}\right)^{\theta}}{\theta}
$. Therefore, \eqref{maintheorem2} means the decay estimate of $\essinf_x\left(\tilde{z}(x,t)\right)$ and 
$\essinf_x\left(\tilde{w}(x,t)\right)$.

We similarly obtain
$
-\dfrac{\left(\bar{\rho}\right)^{\theta}}{\theta}\geq\essinf_x\left(z(x,0)\right),\;
\esssup_x\left(w(x,0)\right)\geq\dfrac{\left(\bar{\rho}\right)^{\theta}}{\theta}
$. If $-\dfrac{\left(\bar{\rho}\right)^{\theta}}{\theta}-E_0-\varepsilon>\essinf_x\left(z(x,0)\right)$ and 
$\esssup_x\left(w(x,0)\right)>\dfrac{\left(\bar{\rho}\right)^{\theta}}{\theta}+E_0+\varepsilon$, 
\eqref{maintheorem2} yields the decay estimate of 
$\essinf_x\left(z(x,t)\right)$ and $\esssup_x\left(w(x,t)\right)$. In fact, we find that 
	\begin{align*}
\essinf_x&\left(z(x,t_0)\right)-\essinf_x\left(z(x,0)\right)\\&
={\rm ess} \inf_x\left(\tilde{z}(x,t_0)+\int^x_{-\infty}J(u(y,t))dy\right)
-\essinf_x\left(z(x,0)\right)
\\&\geq-\dfrac{\left(\bar{\rho}\right)^{\theta}}{\theta}-E_0-\varepsilon-\essinf_x\left(z(x,0)\right)>0,\\
{\rm ess} \sup_x&\left(w(x,t_0)\right)-{\rm ess} \sup_x\left(w(x,0)\right)\\&
={\rm ess} \sup_x\left(\tilde{w}(x,t_0)+\int^x_{-\infty}J(u(y,t))dy\right)-{\rm ess} \sup_x\left(w(x,0)\right)
\\&\leq\dfrac{\left(\bar{\rho}\right)^{\theta}}{\theta}+\varepsilon+E_0-{\rm ess} \sup_x\left(w(x,0)\right)<0.
	\end{align*}
	
\end{remark}

\subsection{Outline of the proof (formal argument)}

The proof of main theorem is a little complicated. Therefore, 
before proceeding to the subject, let us grasp the point of the main estimate by a formal argument. 
Although \eqref{Euler} has a discontinuous solution in general, 
we assume that solutions are smooth and the density is nonnegative in this section.

We consider the physical region $\rho\geq0$ (i.e., $w\geq z$.). Recalling Remark \ref{rem:Riemann-invariant}, it suffices to 
derive the lower bound of $z(u)$ and the upper bound of $w(u)$ to obtain the bound of $u$. To do this, we diagonalize \eqref{Euler}. 
If solutions are smooth, we deduce from \eqref{Euler} 
\begin{align}
z_t+\lambda_1z_x=0,\quad
w_t+\lambda_2w_x=0,
\label{force2}
\end{align} 
where $\lambda_1$ and $\lambda_2$ are the characteristic speeds defined as follows 
\begin{align}
\lambda_1=v-\rho^{\theta},\quad\lambda_2=v+\rho^{\theta}.
\label{char}
\end{align}

From the conservation of mass 
$\eqref{Euler}_1$ and the conservation of energy 
$\left(\eta_{\ast}\right)_t+\left(q_{\ast}\right)_x=0$, we obtain
\begin{align}
	\tilde{z}_t+\lambda_1 \tilde{z}_x=g_1(u),\quad
	\tilde{w}_t+\lambda_2 \tilde{w}_x=g_2(u),
	\label{Riemann1}
\end{align}
where 
\begin{align*}
	q_{\ast}(u)=m\left(\frac12\frac{m^2}{\rho^2}+\frac{\rho^{\gamma-1}}{\gamma-1}\right) \end{align*}
and
\begin{align}
	\begin{alignedat}{2}
		&g_1(u)=&&-\dfrac{\left(\bar{\rho}\right)^{\gamma}}{\gamma}\lambda_1+\dfrac{1}{\gamma(\gamma-1)}\rho^{\gamma+\theta}
		+\dfrac{1}{\gamma}\rho^{\gamma}v+\dfrac{1}{2}\rho^{\theta+1}v^2-\frac{\left(\bar{\rho}\right)^{\gamma-1}}{\gamma-1}\rho^{\theta+1},\\
		&g_2(u)=&&-\dfrac{\left(\bar{\rho}\right)^{\gamma}}{\gamma}\lambda_2-\dfrac{1}{\gamma(\gamma-1)}\rho^{\gamma+\theta}
		+\dfrac{1}{\gamma}\rho^{\gamma}v-\dfrac{1}{2}\rho^{\theta+1}v^2+\frac{\left(\bar{\rho}\right)^{\gamma-1}}{\gamma-1}\rho^{\theta+1}.
	\end{alignedat}
\label{inhomo2}
\end{align}

To prove Theorem \ref{thm:main}, we prepare the following proposition.
\begin{proposition}\label{proposition}
\begin{align}&	
	\begin{cases}
	\bullet\;&g_1(u(x,t))>0\text{, when }\tilde{z}(x,t)<-\dfrac{\left(\bar{\rho}\right)^{\theta}}{\theta}-E_0,\\
	\bullet\;&	g_1(u(x,t))=0\text{, when }\tilde{z}(x,t)=-\dfrac{\left(\bar{\rho}\right)^{\theta}}{\theta}-E_0,\;\int^x_{-\infty}J(u(y,t))dy=E_0\\&\text{ and }
		\rho(x,t)=\bar{\rho},
	\end{cases}\label{g1}\\	
	&
	\begin{cases}
	\bullet\;&g_2(u(x,t))<0\text{, when }\tilde{w}(x,t)>\dfrac{\left(\bar{\rho}\right)^{\theta}}{\theta},\\
		\bullet\;&g_2(u(x,t))=0\text{, when }\tilde{w}(x,t)=\dfrac{\left(\bar{\rho}\right)^{\theta}}{\theta},\;\int^x_{-\infty}J(u(y,t))dy=0\\&\text{ and }
		\rho(x,t)=\bar{\rho}.
	\end{cases}
	\label{g2}
\end{align}	
\end{proposition}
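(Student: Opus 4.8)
The functions $g_1,g_2$ depend only on the state $u=(\rho,m)$, so the claim is a pointwise algebraic fact in which the hypotheses on $\tilde z,\tilde w$ enter only through the bounds $0\le\int^x_{-\infty}J(u(y,t))\,dy\le E_0$ from \eqref{eta}. The plan is first to rewrite $g_1$ as a quadratic in $v$: collecting the terms in \eqref{inhomo2} and using $\gamma=2\theta+1$ gives
\begin{align*}
g_1(u)=\tfrac12\rho^{\theta+1}v^2+\tfrac1\gamma\left(\rho^\gamma-\bar{\rho}^\gamma\right)v+\rho^\theta J(\rho,0),
\end{align*}
where $J(\rho,0)=\dfrac{\rho^\gamma}{\gamma(\gamma-1)}-\dfrac{\bar{\rho}^{\gamma-1}}{\gamma-1}\rho+\dfrac{\bar{\rho}^\gamma}{\gamma}$ is the value of $J$ at $m=0$; by the convexity remark after \eqref{eqn:J} one has $J(\rho,0)\ge0$, with equality iff $\rho=\bar{\rho}$. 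Since $\tilde z=z-\int^x_{-\infty}J\,dy$ and $\int^x_{-\infty}J\,dy\le E_0$, the hypothesis $\tilde z<-\bar{\rho}^\theta/\theta-E_0$ forces $z<-\bar{\rho}^\theta/\theta$, i.e. $v<v_0(\rho):=(\rho^\theta-\bar{\rho}^\theta)/\theta$; likewise $\tilde w>\bar{\rho}^\theta/\theta$ together with $\int^x_{-\infty}J\,dy\ge0$ forces $w>\bar{\rho}^\theta/\theta$. In the two equality statements the three listed conditions pin the state down to $(\rho,v)=(\bar{\rho},0)$, at which $g_1=\bar{\rho}^\theta J(\bar{\rho},0)=0$, and similarly for $g_2$. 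Thus only the strict inequalities remain.

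Next I would dispose of $g_2$ by symmetry. The same collection yields $g_2(u)=-\tfrac12\rho^{\theta+1}v^2+\tfrac1\gamma(\rho^\gamma-\bar{\rho}^\gamma)v-\rho^\theta J(\rho,0)=-g_1(\rho,-v)$, while $w(\rho,v)=-z(\rho,-v)$. Hence $w>\bar{\rho}^\theta/\theta$ is exactly the condition $z<-\bar{\rho}^\theta/\theta$ at the reflected state $(\rho,-v)$, and $g_2(u)<0$ is equivalent to $g_1(\rho,-v)>0$. Therefore \eqref{g2} follows from \eqref{g1}, and it suffices to prove that $g_1(u)>0$ whenever $\rho\ge0$ and $v<v_0(\rho)$.

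For the core estimate I would complete the square, writing $g_1=\tfrac12\rho^{\theta+1}(v-v^\ast)^2+\delta(\rho)$ with $v^\ast=(\bar{\rho}^\gamma-\rho^\gamma)/(\gamma\rho^{\theta+1})$ and $\delta(\rho)=\rho^\theta J(\rho,0)-(\rho^\gamma-\bar{\rho}^\gamma)^2/(2\gamma^2\rho^{\theta+1})$, and split on the sign of $\rho-\bar{\rho}$. A direct comparison shows $v^\ast\ge v_0(\rho)$ iff $\rho\le\bar{\rho}$. For $0\le\rho<\bar{\rho}$ the vertex lies strictly to the right of $v_0$, so on the admissible ray $\{v<v_0\}$ the quadratic is strictly decreasing and $g_1(u)>g_1(\rho,v_0)=:h(\rho)$; it then suffices to prove $h(\rho)>0$. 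For $\rho\ge\bar{\rho}$ the vertex lies in the ray, so I would instead show the global minimum is nonnegative, i.e. $\delta(\rho)\ge0$, equivalently $(\rho^\gamma-\bar{\rho}^\gamma)^2\le2\gamma^2\rho^\gamma J(\rho,0)$, which gives $g_1\ge\delta(\rho)\ge0$ for all $v$, with equality only at $(\bar{\rho},0)$. Both requirements reduce to one-variable inequalities in $\rho$ that vanish to second order at $\rho=\bar{\rho}$ (where $v_0$ and $J(\rho,0)$ vanish together), and I expect the main obstacle to be verifying their sign uniformly for all $\gamma\in(1,5/3]$: after factoring out $(\rho-\bar{\rho})^2$ the remaining factor must be shown positive, which I would handle by normalizing $\bar{\rho}=1$, substituting $s=\rho^\theta$, and reducing to an elementary polynomial/convexity estimate, using $J(\rho,0)\ge0$ to control the lower-order terms.
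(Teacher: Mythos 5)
Your reduction is correct and follows essentially the same route as the paper: you complete the square (in $v$ rather than $z$, which is the same computation up to the shift $v=z+\rho^{\theta}/\theta$), split on the sign of $\rho-\bar{\rho}$ according to whether the vertex lies inside the admissible ray, and land on exactly the paper's two scalar inequalities --- your $h(\rho)=g_1(\rho,v_0(\rho))\ge0$ for $0\le\rho\le\bar{\rho}$ is precisely \eqref{estimate3}, and your $\delta(\rho)\ge0$ for $\rho\ge\bar{\rho}$ is precisely \eqref{estimate4}. Your observation that $g_2(\rho,v)=-g_1(\rho,-v)$ while $w(\rho,v)=-z(\rho,-v)$, so that \eqref{g2} is the reflection of \eqref{g1}, is a cleaner way to dispose of $g_2$ than the paper's parallel computation, and writing the constant term as $\rho^{\theta}J(\rho,0)$ is a nice structural identification; the treatment of the equality cases is also correct.

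The gap is that the two one-variable inequalities are asserted rather than proven, and the first of them is not the routine ``elementary polynomial/convexity estimate'' you anticipate. For $\rho\ge\bar{\rho}$ your plan does work in one line: with $t=\rho/\bar{\rho}$, the function $g(t)$ of \eqref{estimate4} satisfies $g(1)=g'(1)=g''(1)=0$ and $g'''(t)\ge0$ for $t\ge1$ (Appendix \ref{app:formal}). But for $0\le\rho\le\bar{\rho}$ the function $f(t)$ coming from \eqref{estimate3} is not convex on $[0,1]$, and the paper's Appendix \ref{app:formal} has to run a genuine three-step argument: positivity on $[0,\xi]$ where $\xi$ is the smaller root of $(5\gamma-3)X^2-2(3\gamma-1)X+\gamma(3-\gamma)=0$ in the variable $X=t^{\theta}$; positivity on $[\eta,1]$, $\eta$ being the larger root of $3(5\gamma-3)X^2-8\gamma X+3-\gamma=0$, via $f(1)=f'(1)=0$ together with a lower bound for $f''$; and on the middle interval $[\xi,\eta]$ a comparison of $f$ with a cubic $g(X)$ whose critical points must be located relative to $\xi$ and $\eta$. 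Since the strict positivity of $g_1$ in the regime $\rho<\bar{\rho}$ is the heart of the proposition (it is what drives the decay mechanism), leaving this verification at the level of ``I expect it reduces to an elementary estimate'' is a real omission: you should either reproduce an argument of this type, uniformly in $\gamma\in(1,5/3]$, or cite the appendix computation explicitly.
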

\begin{proof}

We first investigate \eqref{g1}. When $\tilde{z}(x,t)\leq-\dfrac{\left(\bar{\rho}\right)^{\theta}}{\theta}-E_0$, from 
\eqref{eta}, 
we observe that 
\begin{align}
z(x,t)=\tilde{z}(x,t)+\int^x_{-\infty}J(u(y,t))dy\leq-\dfrac{\left(\bar{\rho}\right)^{\theta}}{\theta}.
\end{align}
Then, we deduce that
\begin{align}
\begin{alignedat}{3}
&g_1(u)&=&\dfrac{1}{\gamma(\gamma-1)}\rho^{\gamma+\theta}
		+\dfrac{1}{\gamma}\rho^{\gamma}v+\dfrac{1}{2}\rho^{\theta+1}v^2
		-\dfrac{\left(\bar{\rho}\right)^{\gamma}}{\gamma}\lambda_1
		-\frac{\left(\bar{\rho}\right)^{\gamma-1}}{\gamma-1}\rho^{\theta+1}
\\
&&=&\dfrac{1}{\gamma(\gamma-1)}\rho^{\gamma+\theta}
+\dfrac{1}{\gamma}\rho^{\gamma}v+\dfrac{1}{2}\rho^{\theta+1}v^2
-\dfrac{\left(\bar{\rho}\right)^{\gamma}}{\gamma}
\left(z+\frac{3-\gamma}{\gamma-1}\rho^{\theta}\right)
-\frac{\left(\bar{\rho}\right)^{\gamma-1}}{\gamma-1}\rho^{\theta+1}
\\
&&=&\dfrac{1}{\gamma(\gamma-1)}\rho^{\gamma+\theta}
+\dfrac{1}{\gamma}\rho^{\gamma}v+\dfrac{1}{2}\rho^{\theta+1}v^2
-\dfrac{\left(\bar{\rho}\right)^{\gamma}}{\gamma}
\left(z+\frac{3-\gamma}{\gamma-1}\rho^{\theta}\right)
-\frac{\left(\bar{\rho}\right)^{\gamma-1}}{\gamma-1}\rho^{\theta+1}
\\
&&=&\dfrac{1}{\gamma(\gamma-1)}\rho^{\gamma+\theta}
+\dfrac{1}{\gamma}\rho^{\gamma}\left(z
+\dfrac{\rho^{\theta}}{\theta}\right)
+\dfrac{1}{2}\rho^{\theta+1}\left(z
+\dfrac{\rho^{\theta}}{\theta}\right)^2\\
&&&
-\dfrac{\left(\bar{\rho}\right)^{\gamma}}{\gamma}
\left(z+\frac{3-\gamma}{\gamma-1}\rho^{\theta}\right)
-\frac{\left(\bar{\rho}\right)^{\gamma-1}}{\gamma-1}\rho^{\theta+1}
\\
&&=&
\dfrac{\rho^{\theta+1}}{2}\left(z-
\dfrac{1}{\gamma}
\dfrac{\left(\bar{\rho}\right)^{\gamma}}{\rho^{\theta+1}}
+\dfrac{3\gamma-1}{\gamma(\gamma-1)}\rho^{\theta}
\right)^2+\dfrac{\gamma+1}{2\gamma^2(\gamma-1)}\rho^{\gamma+\theta}
\\&&&
-\dfrac{1}{\gamma-1}\left(\bar{\rho}\right)^{\gamma-1}
\rho^{\theta+1}+
\dfrac{\gamma+1}{\gamma^2}\left(\bar{\rho}\right)^{\gamma}
\rho^{\theta}
-\dfrac{1}{2\gamma^2}\left(\bar{\rho}\right)^{2\gamma}\dfrac{1}{\rho^{\theta+1}}.
\end{alignedat}
		\label{estimate1}
	\end{align}
	
When $\rho\leq \bar{\rho}$, since \eqref{estimate1} attains the 
minimum at $z=-\dfrac{\left(\bar{\rho}\right)^{\theta}}{\theta}$, we deduce from Appendix \ref{app:formal}
\begin{align}
g_1(u)\geq&	\dfrac{5\gamma-3}{\gamma(\gamma-1)^2}\rho^{\gamma+\theta}
-\dfrac{2(3\gamma-1)}{\gamma(\gamma-1)^2}\left(\bar{\rho}\right)^{\theta}
\rho^{\gamma}+
\dfrac{3-\gamma}{(\gamma-1)^2}\left(\bar{\rho}\right)^{\gamma-1}
\rho^{\theta+1}
-\dfrac{3-\gamma}{\gamma(\gamma-1)}\left(\bar{\rho}\right)^{\gamma}
\rho^{\theta}\nonumber\\\nonumber&+
\dfrac{2}{\gamma(\gamma-1)}
\left(\bar{\rho}\right)^{\gamma+\theta}\\
\geq&0,
\label{estimate3}
\end{align}
where the equal sign of the second inequality can be used only 
when $\rho=\bar{\rho}$.

When $\rho> \bar{\rho}$, since \eqref{estimate1} attains the 
minimum at $z=\dfrac{1}{\gamma}
\dfrac{\left(\bar{\rho}\right)^{\gamma}}{\rho^{\theta+1}}
-\dfrac{3\gamma-1}{\gamma(\gamma-1)}\rho^{\theta}$, 
we deduce from Appendix \ref{app:formal}  
\begin{align}
	g_1(u)\geq&\dfrac{\gamma+1}{2\gamma^2(\gamma-1)}\rho^{\gamma+\theta}
	-\dfrac{1}{\gamma-1}\left(\bar{\rho}\right)^{\gamma-1}
	\rho^{\theta+1}+
	\dfrac{\gamma+1}{\gamma^2}\left(\bar{\rho}\right)^{\gamma}
	\rho^{\theta}
	-\dfrac{1}{2\gamma^2}\left(\bar{\rho}\right)^{2\gamma}\dfrac{1}{\rho^{\theta+1}}\nonumber\\
\geq&0,
\label{estimate4}
\end{align}
where the equal sign of the second inequality can be used only 
when $\rho=\bar{\rho}$.

We next investigate \eqref{g2}. When $\tilde{w}(x,t)\geq\dfrac{\left(\bar{\rho}\right)^{\theta}}{\theta}$, from 
\eqref{eta}, 
we observe that 
\begin{align}
	w(x,t)=\tilde{w}(x,t)+\int^x_{-\infty}J(u(y,t))dy\geq\dfrac{\left(\bar{\rho}\right)^{\theta}}{\theta}.
\end{align}
Then, we deduce that
\begin{align*}
	\begin{alignedat}{2}
g_2(u)=&-\dfrac{\left(\bar{\rho}\right)^{\gamma}}{\gamma}\lambda_2-\dfrac{1}{\gamma(\gamma-1)}\rho^{\gamma+\theta}
		+\dfrac{1}{\gamma}\rho^{\gamma}v-\dfrac{1}{2}\rho^{\theta+1}v^2+\frac{\left(\bar{\rho}\right)^{\gamma-1}}{\gamma-1}\rho^{\theta+1}\\
		=&-\dfrac{\left(\bar{\rho}\right)^{\gamma}}{\gamma}
\left(w-\frac{3-\gamma}{\gamma-1}\rho^{\theta}\right)-\dfrac{1}{\gamma(\gamma-1)}\rho^{\gamma+\theta}
+\dfrac{1}{\gamma}\rho^{\gamma}v-\dfrac{1}{2}\rho^{\theta+1}v^2+\frac{\left(\bar{\rho}\right)^{\gamma-1}}{\gamma-1}\rho^{\theta+1}\\
=&-\dfrac{\left(\bar{\rho}\right)^{\gamma}}{\gamma}
\left(w-\frac{3-\gamma}{\gamma-1}\rho^{\theta}\right)-\dfrac{1}{\gamma(\gamma-1)}\rho^{\gamma+\theta}
+\dfrac{1}{\gamma}\rho^{\gamma}\left(w-\dfrac{\rho^{\theta}}{\theta}\right)
\\&-\dfrac{1}{2}\rho^{\theta+1}\left(w-\dfrac{\rho^{\theta}}{\theta}\right)^2+\frac{\left(\bar{\rho}\right)^{\gamma-1}}{\gamma-1}\rho^{\theta+1}\\
=&
-\dfrac{\rho^{\theta+1}}{2}\left(w+
\dfrac{1}{\gamma}
\dfrac{\left(\bar{\rho}\right)^{\gamma}}{\rho^{\theta+1}}
-\dfrac{3\gamma-1}{\gamma(\gamma-1)}\rho^{\theta}
\right)^2-\dfrac{\gamma+1}{2\gamma^2(\gamma-1)}\rho^{\gamma+\theta}
\\&
+\dfrac{1}{\gamma-1}\left(\bar{\rho}\right)^{\gamma-1}
\rho^{\theta+1}-
\dfrac{\gamma+1}{\gamma^2}\left(\bar{\rho}\right)^{\gamma}
\rho^{\theta}
+\dfrac{1}{2\gamma^2}\left(\bar{\rho}\right)^{2\gamma}\dfrac{1}{\rho^{\theta+1}}.
\end{alignedat}
\end{align*}
From \eqref{estimate3}--\eqref{estimate4}, we conclude that 
\begin{align}
	g_2(u)
	\leq0,
\end{align}
where the equal sign can be used only 
when $\rho=\bar{\rho}$.
	\end{proof}

{\bf Proof of Theorem \eqref{maintheorem2}}

From \eqref{g1}--\eqref{g2}, for any positive constant 
$\varepsilon$, there exists a positive constant $\delta$ such that 
\begin{align}
	\begin{split}
	&g_1(u)>2\delta,\text{ when }\tilde{z}\leq -\dfrac{\left(\bar{\rho}\right)^{\theta}}{\theta}-E_0-\dfrac{\varepsilon}{2}
	\text{ and }0\leq\rho,\\
	&g_2(u)<-2\delta,\text{ when }\dfrac{\left(\bar{\rho}\right)^{\theta}}{\theta}+\dfrac{\varepsilon}{2}\leq\tilde{w}\text{ and }0\leq\rho
	.
	\end{split}
\label{delta}
\end{align}

We introduce $\hat{z},\hat{w}$ as follows.
\begin{align}
\hat{z}(x,t)=\tilde{z}(x,t)-\delta t,\quad\hat{w}(x,t)=\tilde{w}(x,t)+\delta t.
	\label{transformation2}
\end{align}

We deduce from \eqref{Riemann1} that
\begin{align}
	\hat{z}_t+\lambda_1 \hat{z}_x=g_1(u)-\delta,\quad
	\hat{w}_t+\lambda_2 \hat{w}_x=g_2(u)+\delta.
	\label{Riemann2}
\end{align}
We set 
\begin{align}
M_0=\max\left\{\dfrac{\left(\bar{\rho}\right)^{\theta}}{\theta},\;-\essinf_x\left(\tilde{z}(x,0)\right)+E_0
,\;\essinf_x\left(\tilde{w}(x,0)\right)\right\}.
\label{M_0}
\end{align}
Then, we notice that
\begin{align*}
	-M_0-E_0\leq \hat{z}(x,0),\quad \hat{w}(x,0)\leq M_0.
\end{align*}

Let us prove that 
\begin{align*}
	\hat{S}_{inv}=\{(\hat{z},\hat{w})\in{\bf R}^2;{\rho}\geq0,\;\hat{z}\geq-M_0-E_0,\;\hat{w}\leq M_0\}
\end{align*}
is an invariant region for the Cauchy problem of \eqref{Riemann2} on 
\begin{align*}
0\leq t\leq \max\left(0,\dfrac{M_0-\frac{\left(\bar{\rho}\right)^{\theta}}{\theta}-\varepsilon}{\delta}\right)=:t_0.
\end{align*}
We notice that this yields \eqref{maintheorem2} on $0\leq t\leq t_0$.

To achieve this, assuming
\begin{align*}
	-M_0-E_0< \hat{z}(x,0),\quad \hat{w}(x,0)< M_0
\end{align*}
and there exist $x_{\ast}\in{\bf R},\;0<t_{\ast}\leq t_0$ such that 
the following \eqref{invariant1} or \eqref{invariant2} holds,
\begin{align}
	\begin{alignedat}{2}
		&-M_0-E_0<\hat{z}(x,t),\;\hat{w}(x,t)< M_0,\quad x\in{\bf R},\;
		0\leq t<t_{\ast}\\&\text{\hspace*{0ex}and}\quad
		\hat{z}(x_{\ast},t_{\ast})=-M_0-E_0,\;\hat{w}(x_{\ast},t_{\ast})\leq M_0,	
	\end{alignedat}\label{invariant1}\\	
	\begin{alignedat}{2}
		&-M_0-E_0<\hat{z}(x,t),\;\hat{w}(x,t)< M_0,\quad x\in{\bf R},\;
		0\leq t<t_{\ast}\\&\text{\hspace*{0ex}and}\quad
		\hat{z}(x_{\ast},t_{\ast})\geq-M_0-E_0,\;\hat{w}(x_{\ast},t_{\ast})=M_0,
	\end{alignedat}\label{invariant2}
\end{align}
we will deduce a contradiction. 

To do this, we prove 
\begin{align}	&g_1(u(x_{\ast},t_{\ast}))-\delta>0\text{, when \eqref{invariant1} holds},
	\label{g1check}\\
	&g_2(u(x_{\ast},t_{\ast}))+\delta<0\text{, when \eqref{invariant2} holds}.
	\label{g2check}
\end{align} 

Let us consider \eqref{g1check}. When \eqref{invariant1} and $0\leq t_{\ast}\leq t_0$, 
we notice that
\begin{align*}
\tilde{z}(x_{\ast},t_{\ast})\leq-\frac{\left(\bar{\rho}\right)^{\theta}}{\theta}-E_0-\varepsilon.
\end{align*}
Therefore, from \eqref{delta}, we prove \eqref{g1check}. Since $\hat{z}$ attains the minimum at $(x_{\ast},t_{\ast})$, we can deduce from $\eqref{Riemann2}_1$ a contradiction. We can similarly 
prove \eqref{g2check}.

We notice that $(\tilde{z}(x,t_0),\tilde{w}(x,t_0))$ is contained in 
\begin{align*}
\tilde{S}_{inv}=\left\{(\tilde{z},\tilde{w})\in{\bf R}^2;{\rho}\geq0,\;\tilde{z}\geq-\frac{\left(\bar{\rho}\right)^{\theta}}{\theta}-\varepsilon-E_0,\;\tilde{w}\leq \frac{\left(\bar{\rho}\right)^{\theta}}{\theta}+\varepsilon\right\}.
\end{align*}
Then, we can similarly prove that $\tilde{S}_{inv}$ is an invariant region for the Cauchy problem of \eqref{Riemann1}. Therefore, we conclude \eqref{maintheorem2}.

Although the above argument is formal, it is essential. In fact, we shall implicitly use this argument in the proof of Theorem \ref{goal}. We must next justify the above argument. To do this, we 
introduce a modified Godunov scheme in Section 2. Recently, the various difference schemes are developed in \cite{T1}--\cite{T8}, 
which consist of known functions. On the other hand, the present approximate solutions 
include unknown functions in the form of \eqref{transformation} with constants $\tilde{z},\tilde{w}$ (see \eqref{appro1}).

\section{Construction of Approximate Solutions}
\label{sec:construction-approximate-solutions}
In this section, we construct approximate solutions. Let $T$ be 
any fixed positive constant. In the strip 
$0\leq{t}\leq{T}$, we denote the 
approximate solutions by $u^{\varDelta}(x,t)
=(\rho^{\varDelta}(x,t),m^{\varDelta}(x,t))$. 
We denote the space mesh
lengths by ${\varDelta}x$. 
Using $E_0$ in \eqref{E_0} and $M_0$ in \eqref{M_0}, we take time mesh length ${\varDelta}{t}$ such that 
\begin{align}
	\frac{{\varDelta}x}{{\varDelta}{t}}
	=2(M_0+E_0).
	\label{CFL}
\end{align}
In addition, 
we set 
\begin{align*}
	(j,n)\in2{\bf Z}\times{\bf Z}_{\geq0},
\end{align*}
where ${\bf Z}_{\geq0}=\{0,1,2,3,\ldots\}$.  For simplicity, we use the following terminology
\begin{align}
	\begin{alignedat}{2}
		&x_j=j{\varDelta}x,\;t_n=n{\varDelta}t,\;t_{n.5}=\left(n+\frac12\right){\varDelta}t,
		\;t_{n-}=n{\varDelta}t-0,\;t_{n+}=n{\varDelta}t+0.
	\end{alignedat}
	\label{terminology} 
\end{align}

First we set $u^{\varDelta}(x,t_{0-})=u_0(x)$.

Then, for $j\in 2{\bf Z}$, we define $E_j^0(u)$ by
\begin{align*}
	E_j^0(u)=\frac1{2{\varDelta}x}\int^{x_{j+1}}_{x_{j-1}}
	u^{\varDelta}(x,t_{0-})dx.
\end{align*}

Next, we assume that $u^{\varDelta}(x,t)$ is defined for $t<{t}_{n}$.

Then, for $j\in 2{\bf Z}$, we define $E^n_j(u)$ by 
\begin{align*}
	E^n_j(u)=\frac1{2{\varDelta}x}\int_{x_{j-1}}^{x_{j+1}}u^{\varDelta}(x,t_{n-})dx.
\end{align*}

To determine $u_j^n=(\rho_j^n,m_j^n)$ for $j\in 2{\bf Z}$, we define symbols $I^n_j$ and $L_n$. Let the approximation of $\int^x_{-\infty}J(u(y,t))dy$ be
\begin{align*}
	I^n_j:=
	\int^{x_{j}}_{-\infty}J(E^n(x;u))dx,
\end{align*}where 
\begin{align}
E^n(x;u)=E^n_j(u)\quad x\in[x_{j-1},x_{j+1})
\label{E^n(x;u)}
\end{align}
and $J$ is defined in \eqref{eqn:J}.

Let ${\mathcal D}=(x(t),t)$ denote a discontinuity in 
${u}^{\varDelta}(x,t),\;[\eta_{\ast}]$ and $[q_{\ast}]$ 
denote the jump of $\eta_{\ast}({u}^{ \varDelta}(x,t))$ and $q_{\ast}({u}^{ \varDelta}(x,t))$ across ${\mathcal D}$ from 
left to right, respectively,
\begin{align*}
	&[\eta_{\ast}]=\eta_{\ast}({u}^{\varDelta}(x(t)+0,t))-\eta_{\ast}({u}^{ \varDelta}(x(t)-0,t)),
	\\&
	{[q_{\ast}]=q_{\ast}({u}^{ \varDelta}(x(t)+0,t))-q_{\ast}({u}^{ \varDelta}(x(t)-0,t))},
\end{align*}
where $q_{\ast}(u)$ is the flux of $\eta_{\ast}(u)$ defined by
\begin{align*}
	q_{\ast}(u)=m\left(\frac12\frac{m^2}{\rho^2}+\frac{\rho^{\gamma-1}}{\gamma-1}\right). \end{align*}

Next, to measure the error in the entropy condition and the gap of the 
energy at $t_{n\pm}$, we introduce a functional. To do this, 
we deduce from the Taylor expansion that 
\begin{align}
	\begin{alignedat}{2}
		\eta_{\ast}\left({u}^{\varDelta}(x,t_{n-})\right)
		-
		\eta_{\ast}\left(E^n_j(u)\right)=&
		\nabla\eta_{\ast}(E^n_j(u))\left({u}^{\varDelta}(x,t_{n-})-E^n_j(u)\right)\\&
		+\int^1_0(1-\tau)\cdot{}^t\left({u}^{\varDelta}(x,t_{n-})-E^n_j(u)\right)
		\\&\times\nabla^2\eta_{\ast}\left(E^n_j(u)+\tau\left\{{u}^{\varDelta}(x,t_{n-})-E^n_j(u)\right\}\right)d\tau\\&\times\left({u}^{\varDelta}(x,t_{n-})-E^n_j(u)\right)\\
		=&\nabla\eta_{\ast}(E^n_j(u))\left({u}^{\varDelta}(x,t_{n-})-E^n_j(u)\right)+R^{n}_j(x),
	\end{alignedat}
	\label{Taylor}	
\end{align}where 
\begin{align*}
	R^{n}_j(x)=&\int^1_0(1-\tau)\cdot{}^t\left({u}^{\varDelta}(x,t_{n-})-E^n_j(u)\right)
	\nabla^2\eta_{\ast}\left(E^n_j(u)+\tau\left\{{u}^{\varDelta}(x,t_{n-})-E^n_j(u)\right\}\right)\\&\times\left({u}^{\varDelta}(x,t_{n-})-E^n_j(u)\right)d\tau.
\end{align*}

Then, we define a functional $L_n$ as 
\begin{align}
	\begin{alignedat}{2}
		L_n=&\int^{t_{n}}_{0}\sum_{x\in{\bf R}}\sigma[\eta_{\ast}]-[q_{\ast}]dt
		+\sum^n_{k=0}\int^{\infty}_{-\infty}\left\{\eta_{\ast}({u}^{ \varDelta}(x,t_{k-0}))-\eta_{\ast}(E^k(x;u))\right\}dx\\&+\sum^n_{k=0}\sum_{j\in 2{\bf Z}}\frac1{2{\varDelta}x}\int^{x_{j+1}}_{x_{j-1}}\int^{x}_{x_{j-1}}
	R^k_j(y)dydx,
	\end{alignedat}
	\label{functional discontinuity}	
\end{align}
where the summention in $\sum_{x\in{\bf R}}$
is taken over all discontinuities in ${u}^{ \varDelta}(x,t)$ at a fixed time $t$ over 
$x\in{\bf R}$,
$\sigma$ is the propagating speed of the discontinuities.

Moreover, we set
\begin{align}
M_1=M_0-\delta{\varDelta}t,\quad 
M_{n+1}=
\begin{cases}
M_n-\delta{\varDelta}t,&\text{when }M_{n}+L_n\geq\dfrac{\left(\bar{\rho}\right)^{\theta}}{\theta}+\varepsilon,\\
M_n,&\text{when }M_{n}+L_n<\dfrac{\left(\bar{\rho}\right)^{\theta}}{\theta}+\varepsilon,
\end{cases}
\label{M_n}
\end{align}
where $\delta$ is defined in \eqref{delta}. We notice that $M_n\geq
\dfrac{\left(\bar{\rho}\right)^{\theta}}{\theta}+\varepsilon-\delta{\varDelta}t$.

Using $I^n_j$, $L_n$ and $M_n$, we define $u_j^n$ as follows.

We choose $\mu$ such that $1<\mu<1/(2\theta)$. If 
\begin{align}
	E^n_j(\rho):=
	\frac1{2{\varDelta}x}\int_{x_{j-1}}^{x_{j+1}}\rho^{\varDelta}(x,t_{n-})dx<({\varDelta}x)^{\mu},
\label{eqn:mu}
\end{align} 
we define $u_j^n$ by $u_j^n=(0,0)$;
otherwise, setting\begin{align}
\begin{split}
&{z}_j^n:=
	\max\left\{z(E_j^n(u)),\;-M_n-E_0-L_n+I^n_j\right\},\\
	&w_j^n:=\min\left\{w(E_j^n(u)),\;M_n+L_n+I^n_j\right\}
	,
\end{split}
	\label{def-u^n_j}
\end{align}

we define $u_j^n$ by
\begin{align*}
	u_j^n:=(\rho_j^n,m_j^n):=(\rho_j^n,\rho_j^nv^n_j)
	:=\left(\left\{\frac{\theta(w_j^n-z_j^n)}{2}\right\}
	^{1/\theta},
	\left\{\frac{\theta(w_j^n-z_j^n)}{2}\right\}^{1/\theta}
	\frac{w_j^n+z_j^n}{2}\right).
\end{align*}

\begin{remark}\label{rem:E}
	We find 
	\begin{align}
		\begin{split}
			-M_n-E_0-L_n+I^n_j\leq z(u_j^n),\quad
			{w}(u_j^n)\leq M_n+L_n+I^n_j.
		\end{split}
		\label{remark2.2}
	\end{align}

	This implies that we cut off the parts where 
	$z(E_j^n(u))<-M_n-E_0-L_n+I^n_j$
	and $w(E_j^n(u))>M_n+L_n+I^n_j$
	in  defining $z(u_j^n)$ and 
	${w}(u_j^n)$. Observing \eqref{average}, the order of these cut parts is $o({\varDelta}x)$. The order is so small that we can deduce the compactness and convergence of our approximate solutions.
	
\end{remark}

\subsection{Construction of Approximate Solutions in the Cell}
\label{subsec:construction-approximate-solutions}
We then assume that 
approximate solutions $u^{\varDelta}(x,t)$ are defined in domains $D_1: 
t<{t}_n\quad(
n\in {\bf N})$ and $D_2:
x<x_{j-1}\quad(j\in 2{\bf Z}),\;{t}_n\leq{t}<t_{n+1}$. 
By using $u_j^n$ defined in $D_1$ and $u^{\varDelta}(x,t)$ defined in $
D_2$, we 
construct the approximate solutions  in the cell ${t}_n\leq{t}<{t}_{n+1}\quad 
(n\in{\bf N}),\quad x_{j-1}\leq{x}<x_{j+1}\quad
(j\in 2{\bf Z})$.

We first solve a Riemann problem with initial data $(u_{j-1}^n,u_{j+1}^n)$. 
Call constants $u_{\rm L}(=u_{j-1}^n), u_{\rm M}, u_{\rm R}(=u_{j+1}^n)$ the left, middle and 
right states, respectively. Then the following four cases occur.
\begin{itemize}
	\item {\bf Case 1} A 1-rarefaction wave and a 2-shock arise. 
	\item {\bf Case 2} A 1-shock and a 2-rarefaction wave arise. 
	\item {\bf Case 3} A 1-rarefaction wave and a 2-rarefaction arise.
	\item {\bf Case 4} A 1-shock and a 2-shock arise.
\end{itemize}
We then construct approximate solutions $u^{\varDelta}(x,t)$ by perturbing 
the above Riemann solutions.

Let $\alpha$ be a constant satisfying $1/2<\alpha<1$. We choose 
a positive value $\beta$ small enough.

In this step, we 
consider Case 1 in particular. The constructions of Cases 2--4 are similar 
to that of Case 1. We consider only the case in which $u_{\rm M}$ is away from the vacuum. The other case (i.e., the case where $u_{\rm M}$ is near the vacuum) is a little technical. Therefore, we postpone this case to Appendix \ref{app:vacuum}.

Consider the case where a 1-rarefaction wave and a 2-shock arise as a Riemann 
solution with initial data $(u_j^n,u_{j+1}^n)$. Assume that 
$u_{\rm L},u_{\rm M}$ 
and $u_{\rm M},u_{\rm R}$ are connected by a 1-rarefaction and a 2-shock 
curve, respectively. \vspace*{10pt}\\
{\it Step 1}.\\
In order to approximate a 1-rarefaction wave by a piecewise 
constant {\it rarefaction fan}, we introduce the integer  
\begin{align*}
	p:=\max\left\{[\hspace{-1.2pt}[(z_{\rm M}-z_{\rm L})/({\varDelta}x)^{\alpha}]
	\hspace{-1pt}]+1,2\right\},
\end{align*}
where $z_{\rm L}=z(u_{\rm L}),z_{\rm M}=z(u_{\rm M})$ and $[\hspace{-1.2pt}[x]\hspace{-1pt}]$ is the greatest integer 
not greater than $x$. Notice that
\begin{align}
	p=O(({\varDelta}x)^{-\alpha}).
	\label{order-p}
\end{align}
Define \begin{align*}
	z_1^*:=z_{\rm L},\;z_p^*:=z_{\rm M},\;w_i^*:=w_{\rm L}\;(i=1,\ldots,p),
\end{align*}
and 
\begin{align*}
	z_i^*:=z_{\rm L}+(i-1)({\varDelta}x)^{\alpha}\;(i=1,\ldots,p-1).
\end{align*}
We next introduce the rays $x=(j+1/2){\varDelta}x+\lambda_1(z_i^*,z_{i+1}^*,w_{\rm L})
(t-n{\varDelta}{t})$ separating finite constant states 
$(z_i^*,w_i^*)\;(i=1,\ldots,p)$, 
where  
\begin{align*}
	\lambda_1(z_i^*,z_{i+1}^*,w_{\rm L}):=v(z_i^*,w_{\rm L})
	-S(\rho(z_{i+1}^*,w_{\rm L}),\rho(z_i^*,w_{\rm L})),
\end{align*}
\begin{align*}
	\rho_i^*:=\rho(z_i^*,w_{\rm L}):=\left(\frac{\theta(w_{\rm L}-z_i^*)}2\right)^{1/\theta}\;,
	\quad{v}_i^*:={v}(z_i^*,w_{\rm L}):=\frac{w_{\rm L}+z_i^*}2
\end{align*}
and

\begin{align}
	S(\rho,\rho_0):=\left\{\begin{array}{lll}
		\sqrt{\displaystyle{\frac{\rho(p(\rho)-p(\rho_0))}{\rho_0(\rho-\rho_0)}}}
		,\quad\mbox{if}\;\rho\ne\rho_0,\\
		\sqrt{p'(\rho_0)},\quad\mbox{if}\;\rho=\rho_0.
	\end{array}\right.
	\label{s(,)}
\end{align}

We call this approximated 1-rarefaction wave a {\it 1-rarefaction fan}.

\vspace*{10pt}
{\it Step 2}.\\
In this step, we replace the above constant states  with functions of $x$ and $t$ as follows:

In view of \eqref{transformation}, we construct ${u}^{\varDelta}_1(x,t)$.

We first determine the approximation of $\tilde{z},\tilde{w}$ in \eqref{transformation} 
as follows.
\begin{align*}
	\begin{alignedat}{2}
		\tilde{z}^{\varDelta}_1
		=&z_{\rm L}-
		\int^{x_{j-1}}_{-\infty}
		J(u^{\varDelta}_{n,0}(x))dx,\;
		\tilde{w}^{\varDelta}_1=w_{\rm L}-
		\int^{x_{j-1}}_{-\infty}
		J(u^{\varDelta}_{n,0}(x))dx,
	\end{alignedat}
\end{align*}
where $u^{\varDelta}_{n,0}(x)$
is a piecewise constant function defined by
\begin{align}
	u^{\varDelta}_{n,0}(x)=
		u^n_j,\quad &x\in [x_{j-1},x_{j+1})\quad (j\in 2{\bf Z}).	
	\label{def-u0}
\end{align} 
We set       
\begin{align}
	\begin{alignedat}{2}
		&\check{z}^{\varDelta}_1(x,t)=&&\tilde{z}^{\varDelta}_1
		+\int^{x_{j-1}}_{-\infty}
		J(u^{\varDelta}_{n,0}(x))dx
		+\int^x_{x^{\varDelta}_1}J(u_{\rm L})dy
		\\&&&	
		+\left\{g_1(x,t;u_{\rm L})+V(u_{\rm L})\right\}(t-t_n)
		,\\
		&\check{w}^{\varDelta}_1(x,t)=&&\tilde{w}^{\varDelta}_1
		+ \int^{x_{j-1}}_{-\infty}
		J(u^{\varDelta}_{n,0}(x))dx+\int^x_{x^{\varDelta}_1}J(u_{\rm L})dy
		\\&&&
		+\left\{g_2(x,t;u_{\rm L})+V(u_{\rm L})\right\}
		(t-t_n),
	\end{alignedat}\label{appro1-2}
\end{align}
where $g_1$ and $g_2$ are defined in 
\eqref{inhomo2}, $x^{\varDelta}_1=x_{j-1}$ and 
\begin{align}
	V(u)=q_{\ast}(u)-\frac{\left(\bar{\rho}\right)^{\gamma-1}}{\gamma-1} m.
	\label{V}
\end{align}

From \eqref{appro1-2}, we determine $\check{u}^{\varDelta}_1(x,t)$ by the relation \eqref{relation-Riemann}, that is, 
\begin{align*}
&\check{u}^{\varDelta}_1(x,t)=(\check{\rho}^{\varDelta}_1(x,t),\check{m}^{\varDelta}_1(x,t))=(\check{\rho}^{\varDelta}_1(x,t),\check{\rho}^{\varDelta}_1(x,t)\check{v}^{\varDelta}_1(x,t)),\end{align*} where \begin{align*}
\check{\rho}^{\varDelta}_1(x,t)=\left\{\dfrac{\theta\left(\check{w}^{\varDelta}_1(x,t)-\check{z}^{\varDelta}_1(x,t)\right)}{2}\right\}^{\frac{1}{\theta}},\quad\check{v}^{\varDelta}_1(x,t)=\dfrac{\check{w}^{\varDelta}_1(x,t)+\check{z}^{\varDelta}_1(x,t)}{2}.
\end{align*} 

Using $\check{u}^{\varDelta}_1(x,t)$, we next define ${u}^{\varDelta}_1(x,t)$ as follows. 
\begin{align}
	\begin{alignedat}{2}
		&{z}^{\varDelta}_1(x,t)=&&\tilde{z}^{\varDelta}_1
		+\int^{x_{j-1}}_{-\infty}
		J(u^{\varDelta}_{n,0}(x))dx+\int^x_{x^{\varDelta}_1}
		J(\check{u}^{\varDelta}_1(y,t))dy\\&&&	+
		\left\{g_1(x,t;\check{u}^{\varDelta}_1)+V(u_{\rm L})\right\}(t-t_n)
		,\\
		&{w}^{\varDelta}_1(x,t)=&&\tilde{w}^{\varDelta}_1
		+\int^{x_{j-1}}_{-\infty}
		J(u^{\varDelta}_{n,0}(x))dx+\int^x_{x^{\varDelta}_1}
		J(\check{u}^{\varDelta}_1(y,t))dy\\&&&
		+\left\{g_2(x,t;\check{u}^{\varDelta}_1)+V(u_{\rm L})\right\}(t-t_n).
	\end{alignedat}\label{appro1}
\end{align}
From \eqref{appro1}, we determine ${u}^{\varDelta}_1(x,t)$ by the relation \eqref{relation-Riemann}. 
\begin{remark}${}$
	\begin{enumerate}
		\item We notice that approximate solutions ${z}^{\varDelta}_1,{w}^{\varDelta}_1$ 
		and $\tilde{z}^{\varDelta}_1,\tilde{w}^{\varDelta}_1$ correspond to 
		$z,w$ and $\tilde{z},\tilde{w}$ in \eqref{transformation}, respectively.
		\item 
		For $t_n<t<t_{n+1}$, our approximate solutions will satisfy 
		\begin{align}
			\begin{alignedat}{2} \int^{x_{j-1}}_{-\infty}&
				J(u^{\varDelta}(x,t_{n+1-}))dx+\int^{t_{n+1}}_{t_n}\sum_{x\leq x_{j-1}}(\sigma[\eta_{\ast}]-[q_{\ast}])dt\\
				&=\int^{x_{j-1}}_{-\infty}
				J(u^{\varDelta}_{n,0}(x))dx+V(u_{\rm L}){\varDelta}t
				+o({\varDelta}x).
			\end{alignedat}
			\label{mass-conservation}	
		\end{align}
		In \eqref{appro1}, we thus employ the right hand side of \eqref{mass-conservation} 
		instead of the left hand side.
		\item Our construction of approximate solutions uses the iteration method twice (see \eqref{appro1-2} and \eqref{appro1}) 
		to deduce \eqref{iteration}. 
	\end{enumerate}

\end{remark}

First, by the implicit function theorem, we determine a propagation speed $\sigma_2$ and $u_2=(\rho_2,m_2)$ such that 
\begin{itemize}
	\item[(1.a)] $z_2:=z(u_2)=z^*_2$
	\item[(1.b)] the speed $\sigma_2$, the left state ${u}^{\varDelta}_1(x_2,t_{n.5})$ and the right state $u_2$ satisfy the Rankine--Hugoniot conditions, i.e.,
	\begin{align*}
		f(u_2)-f({u}^{\varDelta}_1(x^{\varDelta}_2(t_{n.5}),t_{n.5}))=\sigma_2(u_2-{u}^{\varDelta}_1(x^{\varDelta}_2(t_{n.5}),t_{n.5})),
	\end{align*}
\end{itemize}
where $x^{\varDelta}_2(t)
=x_j+
\sigma_2(t-t_n)$. Then we fill up by ${u}^{\varDelta}_1(x)$ the sector where $t_n\leq{t}<t_{n+1},x_{j-1}\leq{x}<x^{\varDelta}_2(t)$ (see Figure \ref{case1-1cell}).

\begin{figure}[htbp]
	\begin{center}
		\hspace{-2ex}
		\includegraphics[scale=0.3]{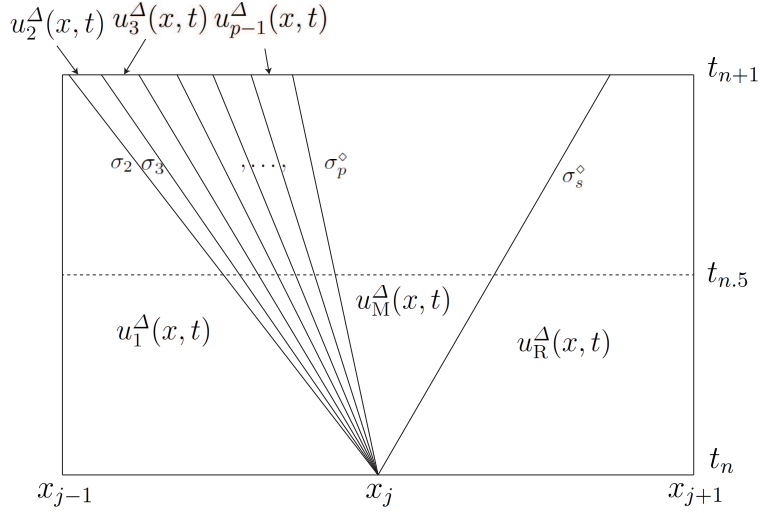}
	\end{center}
	\caption{The approximate solution in the case where a 1-rarefaction and 
		a 2-shock arise in the cell.}
	\label{case1-1cell}
\end{figure}

Assume that $u_k$, ${u}^{\varDelta}_k(x,t)$, a propagation speed $\sigma_k$ and $x^{\varDelta}_{k}(t)$  are defined. Then we similarly determine
$\sigma_{k+1}$ and $u_{k+1}=(\rho_{k+1},m_{k+1})$ such that 
\begin{itemize}
	\item[($k$.a)] $z_{k+1}:=z(u_{k+1})=z^*_{k+1}$,
	\item[($k$.b)] $\sigma_{k}<\sigma_{k+1}$,
	\item[($k$.c)] the speed 
	$\sigma_{k+1}$, 
	the left state ${u}^{\varDelta}_k(x^{\varDelta}_{k+1}(t_{n.5}),t_{n.5})$ and the right state $u_{k+1}$ satisfy 
	the Rankine--Hugoniot conditions, 
\end{itemize}
where $x^{\varDelta}_{k+1}(t)=x_j+\sigma_{k+1}(t-t_n)$. Then we fill up by ${u}^{\varDelta}_k(x,t)$ the sector where $t_n\leq{t}<t_{n+1},x^{\varDelta}_{k}(t)\leq{x}<x^{\varDelta}_{k+1}(t)$ (see Figure \ref{case1-1cell}).

We construct ${u}^{\varDelta}_{k+1}(x,t)$ as follows.

We first determine
\begin{align*}&
	\begin{alignedat}{2}
		&\tilde{z}^{\varDelta}_{k+1}=&&z_{k+1}-\int^{x_{j-1}}_{-\infty}
		J(u^{\varDelta}_{n,0}(x))dx-V(u_{\rm L})\frac{{\varDelta}t}{2}
		-\sum^{k}_{l=1}\int^{x^{\varDelta}_{l+1}(t_{n.5})}_{x^{\varDelta}_l(t_{n.5})}
		J(u^{\varDelta}_l(x,t_{n.5}))dx
		,
	\end{alignedat}\\
	&\begin{alignedat}{2}
		&\tilde{w}^{\varDelta}_{k+1}=&&w_{k+1}-\int^{x_{j-1}}_{-\infty}
		J(u^{\varDelta}_{n,0}(x))dx-V(u_{\rm L})\frac{{\varDelta}t}{2}-\sum^{k}_{l=1}\int^{x^{\varDelta}_{l+1}(t_{n.5})}_{x^{\varDelta}_l(t_{n.5})}
		J(u^{\varDelta}_l(x,t_{n.5}))dx
		,
	\end{alignedat}
\end{align*}
where $x^{\varDelta}_1(t)=x_{j-1},\;x^{\varDelta}_l(t)=x_j+\sigma_l(t-t_n)\quad
(l=2,3,\ldots,k+1)$ and $t_{n.5}$ is defined in \eqref{terminology}.

We next define $\check{u}^{\varDelta}_{k+1}$ as follows.
\begin{align*}
	\begin{alignedat}{2}
		&\check{z}^{\varDelta}_{k+1}(x,t)=&&\tilde{z}^{\varDelta}_{k+1}+\int^{x_{j-1}}_{-\infty}
		J(u^{\varDelta}_{n,0}(x))dx+V(u_{\rm L})(t-t_n)
		+\sum^{k}_{l=1}
		\int^{x^{\varDelta}_{l+1}(t)}_{x^{\varDelta}_l(t)}
		J(u^{\varDelta}_l(x,t))dx\\&&&+\int^x_{x^{\varDelta}_{k+1}(t)}
		J(u_{k+1})dy
		+g_1(x,t;u_{k+1})(t-t_{n.5})
		+\int^{t}_{t_{n.5}}\hspace*{0ex}\sum_{\substack{x_{j-1}\leq y \leq x}}\hspace*{-1ex}(\sigma[\eta_{\ast}]-[q_{\ast}])ds
		,\end{alignedat}
\end{align*}\begin{align*}
	\begin{alignedat}{2}
		&\check{w}^{\varDelta}_{k+1}(x,t)=&&\tilde{w}^{\varDelta}_{k+1}
		+ \int^{x_{j-1}}_{-\infty}
		J(u^{\varDelta}_{n,0}(x))dx+V(u_{\rm L})(t-t_n)
		+\sum^{k}_{l=1}
		\int^{x^{\varDelta}_{l+1}(t)}_{x^{\varDelta}_l(t)}
		J(u^{\varDelta}_l(x,t))dx\\&&&
		+\int^x_{x^{\varDelta}_{k+1}(t)}
		J(u_{k+1})dy
		+g_2(x,t;u_{k+1})(t-t_{n.5})
		+\int^{t}_{t_{n.5}}\hspace*{0ex}\sum_{\substack{x_{j-1}\leq y \leq x}}\hspace*{-1ex}(\sigma[\eta_{\ast}]-[q_{\ast}])ds.
	\end{alignedat}
\end{align*}
From the above, we determine $\check{u}^{\varDelta}_{k+1}(x,t)$ by the relation \eqref{relation-Riemann}.

Finally, using $\check{u}^{\varDelta}_{k+1}(x,t)$, we define ${u}^{\varDelta}_{k+1}(x,t)$ as follows.

\begin{align}
	\begin{alignedat}{2}
		&{z}^{\varDelta}_{k+1}(x,t)&=&\tilde{z}^{\varDelta}_{k+1}+ \int^{x_{j-1}}_{-\infty}
		J(u^{\varDelta}_{n,0}(x))dx+V(u_{\rm L})(t-t_n)
		+\sum^{k}_{l=1}
		\int^{x^{\varDelta}_{l+1}(t)}_{x^{\varDelta}_l(t)}
		J(u^{\varDelta}_l(x,t))dx\\&&&
		+\int^x_{x^{\varDelta}_{k+1}(t)}
		J(\check{u}^{\varDelta}_{k+1}(y,t))dy
		+g_1(x,t;\check{u}^{\varDelta}_{k+1})(t-t_{n.5})
\\&&&
		+\int^{t}_{t_{n.5}}\hspace*{0ex}\sum_{\substack{x_{j-1}\leq y \leq x}}\hspace*{-1ex}(\sigma[\eta_{\ast}]-[q_{\ast}])ds
		,\\
		&{w}^{\varDelta}_{k+1}(x,t)&=&\tilde{w}^{\varDelta}_{k+1}
		+ \int^{x_{j-1}}_{-\infty}
		J(u^{\varDelta}_{n,0}(x))dx+V(u_{\rm L})(t-t_n)
		+\sum^{k}_{l=1}
		\int^{x^{\varDelta}_{l+1}(t)}_{x^{\varDelta}_l(t)}
		J(u^{\varDelta}_l(x,t))dx\\&&&+\int^x_{x^{\varDelta}_{k+1}(t)}J(
		\check{u}^{\varDelta}_{k+1}(y,t))dy
		+g_2(x,t;\check{u}^{\varDelta}_{k+1})(t-t_{n.5})
\\&&&
		+\int^{t}_{t_{n.5}}\hspace*{0ex}\sum_{\substack{x_{j-1}\leq y \leq x}}\hspace*{-1ex}(\sigma[\eta_{\ast}]-[q_{\ast}])ds.
	\end{alignedat}
	\label{appr-k}
\end{align}
From \eqref{appr-k}, we determine ${u}^{\varDelta}_{k+1}(x,t)$ by the relation \eqref{relation-Riemann}.

By induction, we define $u_i$, ${u}^{\varDelta}_i(x,t)$ and $\sigma_i$ $(i=1,\ldots,p-1)$.
Finally, we determine a propagation speed $\sigma_p$ and $u_p=(\rho_p,m_p)$ such that
\begin{itemize}
	\item[($p$.a)] $z_p:=z(u_p)=z^*_p$,
	\item[($p$.b)] the speed $\sigma_p$, 
	and the left state ${u}^{\varDelta}_{p-1}(x^{\varDelta}_{p}(t_{n.5}),t_{n.5})$ and the right state $u_p$ satisfy the Rankine--Hugoniot conditions, 
\end{itemize}where $x^{\varDelta}_{p}(t)=x_j+\sigma_{p}(t-t_n)$. 
We then fill up by ${u}^{\varDelta}_{p-1}(x,t)$ and $u_p$ the sector where
$t_n\leq{t}<t_{n+1},x^{\varDelta}_{p-1}(t)
\leq{x}<x^{\varDelta}_{p}(t)$ 
and the line $t_n\leq{t}<t_{n+1},x=x^{\varDelta}_{p}(t)$, respectively.

Given $u_{\rm L}$ and $z_{\rm M}$ with $z_{\rm L}\leq{z}_{\rm M}$, we denote 
this piecewise functions of $x$ and $t$ 1-rarefaction wave by 
$R_1^{\varDelta}(u_{\rm L},z_{\rm M},x,t)$.

On the other hand, we construct ${u}^{\varDelta}_{\rm R}(x,t)$ as follows. 

We first set 
\begin{align*}
	\begin{alignedat}{2}
		\tilde{z}^{\varDelta}_{\rm R}
		=z_{\rm R}- \int^{x_{j+1}}_{-\infty}
		J(u^{\varDelta}_{n,0}(x))dx,\;
		\tilde{w}^{\varDelta}_{\rm R}
		=w_{\rm R}- \int^{x_{j+1}}_{-\infty}
		J(u^{\varDelta}_{n,0}(x))dx.
	\end{alignedat}
\end{align*}
We next construct $\check{u}^{\varDelta}_{\rm R}$
\begin{align*}
	\begin{alignedat}{2}
		\check{z}^{\varDelta}_{\rm R}(x,t)&&=&\tilde{z}^{\varDelta}_{\rm R}
		+ \int^{x_{j+1}}_{-\infty}
		J(u^{\varDelta}_{n,0}(x))dx+V(u_{\rm R})(t-t_n)
		\\&&&+\int^x_{x_{j+1}}J(u_{\rm R})dy	+g_1(x,t;u_{\rm R})(t-t_n)
		,\\
		\check{w}^{\varDelta}_{\rm R}(x,t)&&=&\tilde{w}^{\varDelta}_{\rm R}
		+\int^{x_{j+1}}_{-\infty}
		J(u^{\varDelta}_{n,0}(x))dx+V(u_{\rm R})(t-t_n)
		\\&&&+\int^x_{x_{j+1}}J(u_{\rm R})dy+g_2(x,t;u_{\rm R})(t-t_n).
	\end{alignedat}
\end{align*}
From the above, we determine $\check{u}^{\varDelta}_{\rm R}(x,t)$ by the relation \eqref{relation-Riemann}.

Using $\check{u}^{\varDelta}_{\rm R}(x,t)$, we define ${u}^{\varDelta}_{\rm R}(x,t)$ as follows. \begin{align}
	\begin{alignedat}{2}
		{z}^{\varDelta}_{\rm R}(x,t)&=&&\tilde{z}^{\varDelta}_{\rm R}
		+ \int^{x_{j+1}}_{-\infty}
		J(u^{\varDelta}_{n,0}(x))dx+V(u_{\rm R})(t-t_n)\\&&&+\int^x_{x_{j+1}}J(\check{u}_{\rm R}(y,t))dy
		+g_1(x,t;\check{u}_{\rm R})(t-t_n)
		,\\
		{w}^{\varDelta}_{\rm R}(x,t)&=&&\tilde{w}^{\varDelta}_{\rm R}
		+ \int^{x_{j+1}}_{-\infty}
		J(u^{\varDelta}_{n,0}(x))dx+V(u_{\rm R})(t-t_n)\\&&&+\int^x_{x_{j+1}}J(\check{u}_{\rm R}(y,t))dy
		+g_2(x,t;\check{u}_{\rm R})(t-t_n).
	\end{alignedat}
	\label{appr-R}
\end{align}
From \eqref{appr-R}, we determine ${u}^{\varDelta}_{\rm R}(x,t)$ by the relation \eqref{relation-Riemann}. 

Now we fix ${u}^{\varDelta}_{\rm R}(x,t)$ and ${u}^{\varDelta}_{p-1}(x,t)$. 
Let $\sigma_s$ be 
the propagation speed of the 2-shock connecting $u_{\rm M}$ and $u_{\rm R}$.
Choosing ${\sigma}^{\diamond}_p$ near to $\sigma_p$, ${\sigma}^{\diamond}_s$ 
near to 
$\sigma_s$ and $u^{\diamond}_{\rm M}$ near to $u_{\rm M}$, we fill up by ${u}^{\varDelta}_{\rm M}(x,t)$ the gap between $x=x_j+{\sigma}^{\diamond}_{p}
(t-{t}_n)$ and $x=x_j+{\sigma}^{\diamond}_s(t-{t}_n)$, such that 
\begin{itemize}
	\item[(M.a)] $\sigma_{p-1}<\sigma^{\diamond}_p<\sigma^{\diamond}_s$, 
	\item[(M.b)] the speed ${\sigma}^{\diamond}_p$, the left and right states 
	${u}^{\varDelta}_{p-1}(x^{\diamond}_{p},t_{n.5}),{u}^{\varDelta}_{\rm M}(x^{\diamond}_{p},t_{n.5})$ 
	satisfy the Rankine--Hugoniot conditions,
	\item[(M.c)] the speed ${\sigma}^{\diamond}_s$, the left and right 
	states ${u}^{\varDelta}_{\rm M}(x^{\diamond}_{s},t_{n.5}),{u}^{\varDelta}_{\rm R}(x^{\diamond}_{s},t_{n.5})$ satisfy the Rankine--Hugoniot conditions, 
\end{itemize}
where $x^{\diamond}_{p}:=x_j+\sigma^{\diamond}_{p}{\varDelta}
/2$, $x^{\diamond}_s:=x_j+\sigma^{\diamond}_s{\varDelta}
/2$ and ${u}^{\varDelta}_{\rm M}(x,t)$ defined as follows.

We first set
\begin{align*}
	\begin{alignedat}{2}
		\tilde{z}^{\varDelta}_{\rm M}&=&&z^{\diamond}_{\rm M}
		- \int^{x_{j+1}}_{-\infty}
		J(u^{\varDelta}_{n,0}\left(x\right))dx- V(u_{\rm R})
		\frac{{\varDelta}t}{2}-\int^{x^{\varDelta}_{\rm R}(t_{n.5})}_{x_{j+1}}
		J(u^{\varDelta}_{\rm R}(x,t_{n.5}))dx
		,\\	
		\tilde{w}^{\varDelta}_{\rm M}&=&&w^{\diamond}_{\rm M}-\int^{x_{j+1}}_{-\infty}
		J(u^{\varDelta}_{n,0}\left(x\right))dx- V(u_{\rm R})
		\frac{{\varDelta}t}{2}-\int^{x^{\varDelta}_{\rm R}(t_{n.5})}_{x_{j+1}}
		J(u^{\varDelta}_{\rm R}(x,t_{n.5}))dx,
	\end{alignedat}
\end{align*}
where $x^{\varDelta}_{\rm R}(t)=x_j+\sigma^{\diamond}_{s}(t-t_n)$.

We construct $\check{u}^{\varDelta}_{\rm M}$
\begin{align*}
	\begin{alignedat}{2}
		\check{z}^{\varDelta}_{\rm M}(x,t)&=&&\tilde{z}^{\varDelta}_{\rm M}
		+ \int^{x_{j+1}}_{-\infty}
		J(u^{\varDelta}_{n,0}(x))dx+V(u_{\rm R})(t-t_n)+\int^{x^{\varDelta}_{\rm R}(t)}_{x_{j+1}}J(u^{\varDelta}_{\rm R}(x,t))dy
		\\&&&	+\int_{x^{\varDelta}_{\rm R}(t)}^{x}J(u_{\rm M})dy
		+g_1(x,t;u_{\rm M})(t-t_{n.5})
\\&&&
-\int^{t}_{t_{n.5}}\hspace*{0ex}\sum_{\substack{x\leq 
				y\leq x_{j+1}}}\hspace*{-1ex}(\sigma[\eta_{\ast}]-[q_{\ast}])ds
		,\\
		\check{w}^{\varDelta}_{\rm M}(x,t)&=&&\tilde{w}^{\varDelta}_{\rm M}
		+ \int^{x_{j+1}}_{-\infty}
		J(u^{\varDelta}_{n,0}(x))dx+V(u_{\rm R})(t-t_n)+\int^{x^{\varDelta}_{\rm R}(t)}_{x_{j+1}}J(u^{\varDelta}_{\rm R}(x,t))dy\\&&&
		+\int_{x^{\varDelta}_{\rm R}(t)}^{x}J(u_{\rm M})dy	+g_2(x,t;u_{\rm M})(t-t_{n.5})
\\&&&
		-\int^{t}_{t_{n.5}}\hspace*{0ex}\sum_{\substack{x\leq 
				y\leq x_{j+1}}}\hspace*{-1ex}(\sigma[\eta_{\ast}]-[q_{\ast}])ds.
	\end{alignedat}
\end{align*}
From the above, we determine $\check{u}^{\varDelta}_{\rm M}(x,t)$ by the relation \eqref{relation-Riemann}.

Using $\check{u}^{\varDelta}_{\rm M}(x,t)$, we next define ${u}^{\varDelta}_{\rm M}(x,t)$ as follows. \begin{align}
	\begin{alignedat}{2}
		{z}^{\varDelta}_{\rm M}(x,t)&=&&\tilde{z}^{\varDelta}_{\rm M}
		+ \int^{x_{j+1}}_{-\infty}
		J(u^{\varDelta}_{n,0}(x))dx+V(u_{\rm R})(t-t_n)+\int^{x^{\varDelta}_{\rm R}(t)}_{x_{j+1}}J(u^{\varDelta}_{\rm R}(x,t))dy\\&&&	
		+\int_{x^{\varDelta}_{\rm R}(t)}^{x}J(\check{u}^{\varDelta}_{\rm M}(y,t))dy+g_1(x,t;\check{u}^{\varDelta}_{\rm M})(t-t_{n.5})
\\&&&
		-\int^{t}_{t_{n.5}}\hspace*{0ex}\sum_{\substack{x\leq 
				y\leq x_{j+1}}}\hspace*{-1ex}(\sigma[\eta_{\ast}]-[q_{\ast}])ds
		,
		\\	{w}^{\varDelta}_{\rm M}(x,t)&=&&\tilde{w}^{\varDelta}_{\rm M}
		+\int^{x_{j+1}}_{-\infty}
		J(u^{\varDelta}_{n,0}(x))dx+V(u_{\rm R})(t-t_n)+\int^{x^{\varDelta}_{\rm R}(t)}_{x_{j+1}}J(u^{\varDelta}_{\rm R}(x,t))dy\\&&&	
		+\int_{x^{\varDelta}_{\rm R}(t)}^{x}J(\check{u}^{\varDelta}_{\rm M}(y,t))dy+g_2(x,t;\check{u}^{\varDelta}_{\rm M})(t-t_{n.5})
		\\&&&
		-\int^{t}_{t_{n.5}}\hspace*{0ex}\sum_{\substack{x\leq 
				y\leq x_{j+1}}}\hspace*{-1ex}(\sigma[\eta_{\ast}]-[q_{\ast}])ds.\end{alignedat}
	\label{appr-M}
\end{align}
From \eqref{appr-M}, we determine ${u}^{\varDelta}_{\rm M}(x,t)$ by the relation \eqref{relation-Riemann}.

We denote this approximate Riemann solution, which consists of \eqref{appr-k}, \eqref{appr-R}, \eqref{appr-R}
, by ${u}^{\varDelta}(x,t)$. The validity of the above construction is demonstrated in \cite[Appendix A]{T1}.

\begin{remark}\label{rem:middle-time}
	${u}^{\varDelta}(x,t)$ satisfies the Rankine--Hugoniot conditions
	at the middle time of the cell, $t=t_{n.5}$.
\end{remark}


\begin{remark}\label{rem:approximate}
	The approximate solution $u^{\varDelta}(x,t)$ is piecewise smooth in each of the 
	divided parts of the cell. Then, in the divided part, $u^{\varDelta}(x,t)$ satisfies
	\begin{align*}
		(u^{\varDelta})_t+f(u^{\varDelta})_x-g(x,u^{\varDelta})=o(1).
	\end{align*}
\end{remark}

To deduce that $L_{n}$ is uniformly bounded, we prove the following lemma.
\begin{lemma}\label{lem:error}
	\begin{align}
		0&\leq\sum^n_{k=0}\int^{\infty}_{-\infty}\left\{\eta_{\ast}({u}^{ \varDelta}(x,t_{k-0}))-\eta_{\ast}(E^k(x;u))\right\}dx+\int^{t_n}_{0}\sum_{x\in{\bf R}}(\sigma[\eta_{\ast}]-[q_{\ast}])dt\label{lemm 3.5.1}\\
		&=\sum^n_{k=0}\sum_{j\in 2{\bf Z}}\int^{x_{j+1}}_{x_{j-1}}R^n_j(x)dx+\int^{t_n}_{0}\sum_{x\in{\bf R}}(\sigma[\eta_{\ast}]-[q_{\ast}])dt+o({\varDelta}x)
		\label{lemm 3.5.2}
		\\ 
		&=
		\int^{\infty}_{-\infty}\left\{J\left({u}^{\varDelta}(x,t_{0-})\right)-J\left({u}^{\varDelta}(x,t_{n+})\right)\right\}dx
		+o({\varDelta}x)\label{lemm 3.5.3}
		\\
		&\leq\int^{\infty}_{-\infty}J(u_0(x))dx+o({\varDelta}x).\label{lemm 3.5.4}
	\end{align}
	where $o({\varDelta}x)$ depends only on $M_0,E_0$ and $T$.

\begin{align}
L_n\leq C,\label{lemm 3.5.5}
\end{align}
where $C$ depends only on initial data. 
\end{lemma}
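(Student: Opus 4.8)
Write $A_n$ for the quantity in \eqref{lemm 3.5.1}; the plan is to read \eqref{lemm 3.5.1}--\eqref{lemm 3.5.4} as one chain of the form $0\le A_n=(\text{remainder form})=(\text{energy drop})\le E_0+o(\varDelta x)$, with the cell balance \eqref{mass-conservation} as the engine, and then to bound $L_n$ by comparing it piece by piece with $A_n$. First I would record the nonnegativity: the averaging part $\sum_{k}\int_{-\infty}^{\infty}\{\eta_{\ast}(u^{\varDelta}(x,t_{k-}))-\eta_{\ast}(E^k(x;u))\}\,dx$ is nonnegative cell by cell by Jensen's inequality, since $E^k_j(u)$ is the mean of $u^{\varDelta}(\cdot,t_{k-})$ over $[x_{j-1},x_{j+1})$ and $\eta_{\ast}$ is convex, while in the discontinuity part the genuine $1$- and $2$-shocks each give $\sigma[\eta_{\ast}]-[q_{\ast}]\ge0$ by Lax admissibility; the small jumps composing the rarefaction fans are anti-dissipative but, because they have strength $O((\varDelta x)^{\alpha})$ with $\alpha>1/2$, are dominated by the averaging part up to $o(\varDelta x)$. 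To pass to \eqref{lemm 3.5.2} I would substitute the Taylor identity \eqref{Taylor}: integrated over a cell, its first-order term $\nabla\eta_{\ast}(E^k_j(u))\int_{x_{j-1}}^{x_{j+1}}(u^{\varDelta}(x,t_{k-})-E^k_j(u))\,dx$ vanishes by the mean property of $E^k_j$, leaving $\sum_{k}\sum_{j}\int_{x_{j-1}}^{x_{j+1}}R^k_j(x)\,dx$, while the cut-off between $E^k_j(u)$ and $u^n_j$ in \eqref{def-u^n_j} and the near-vacuum cells of \eqref{eqn:mu} cost only $o(\varDelta x)$ by Remark \ref{rem:E}.

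The decisive step is \eqref{lemm 3.5.3}, which I would get by telescoping the total modified energy $\mathcal{E}(t):=\int_{-\infty}^{\infty}J(u^{\varDelta}(x,t))\,dx$, finite because $u^{\varDelta}=(\bar{\rho},0)$ off a compact set and $J(\bar{\rho},0)=0$. I split $\mathcal{E}(t_{0-})-\mathcal{E}(t_{n+})$ into the resets at the times $t_k$ and the evolutions on the open strips $t_{k+}<t<t_{(k+1)-}$. At a reset, $u^{\varDelta}(\cdot,t_{k-})$ is replaced by the piecewise-constant $u^n_j$; since $J$ differs from $\eta_{\ast}$ only by a term linear in $\rho$ plus a constant and averaging preserves $\int\rho$ on each cell (the cut-off moving it by $o(\varDelta x)$ in total), the reset drop equals the averaging part of $A_n$ up to $o(\varDelta x)$. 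On each open strip I would invoke \eqref{mass-conservation}: summing it over cells and letting the right endpoint tend to $+\infty$, where $V(\bar{\rho},0)=0$, the boundary fluxes $V(u_{\rm L})$ telescope away and one is left with $\mathcal{E}(t_{k+})-\mathcal{E}(t_{(k+1)-})=\int_{t_k}^{t_{k+1}}\sum_{x}(\sigma[\eta_{\ast}]-[q_{\ast}])\,dt+o(\varDelta x)$; here the identity $\sigma[J]-[V]=\sigma[\eta_{\ast}]-[q_{\ast}]$, which holds because Rankine--Hugoniot for mass gives $\sigma[\rho]=[m]$, is what lets the $J$-dissipation be read off from $\eta_{\ast}$ and $q_{\ast}$. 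Adding the two families reproduces $A_n$ and gives \eqref{lemm 3.5.3}.

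Then \eqref{lemm 3.5.4} is immediate, since $\mathcal{E}(t_{n+})\ge0$ ($J\ge0$ pointwise) and $\mathcal{E}(t_{0-})=\int J(u_0)\,dx=E_0$ because $u^{\varDelta}(\cdot,t_{0-})=u_0$. For \eqref{lemm 3.5.5} I would observe that $L_n$ in \eqref{functional discontinuity} is the sum of the two terms of $A_n$ and the double integral of $R^k_j$; since $0\le\int_{x_{j-1}}^{x}R^k_j(y)\,dy\le\int_{x_{j-1}}^{x_{j+1}}R^k_j(y)\,dy$, this double integral is dominated by $\sum_{k}\sum_{j}\int_{x_{j-1}}^{x_{j+1}}R^k_j(x)\,dx$, which by \eqref{lemm 3.5.2} and the nonnegativity (up to $o(\varDelta x)$) of the discontinuity part is at most $E_0+o(\varDelta x)$. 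Hence $L_n\le2E_0+o(\varDelta x)\le C$, with $C$ depending only on $u_0$.

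The main obstacle I anticipate is the balance used in the second paragraph, namely justifying \eqref{mass-conservation} and summing it without loss: one must show that the artificial sources $g_1,g_2$ and fluxes $V(u_{\rm L})$ built into \eqref{appro1}--\eqref{appr-M}, which shift the running integral $\int_{-\infty}^{x}J$, cancel in the total energy and leave only the genuine shock dissipation, and that every discretization error accumulates to $o(\varDelta x)$ uniformly in $n$ for $t_n\le T$ without a bound on the total variation: the cut-offs and near-vacuum cells, the anti-dissipation of the rarefaction fans, and the fact (Remark \ref{rem:middle-time}) that the Rankine--Hugoniot relations hold exactly only at the midtime $t_{n.5}$. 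Controlling these genuinely at order $o(\varDelta x)$, rather than $o(1)$, is the delicate point and is what forces the choice $1/2<\alpha<1$ together with the scaling \eqref{CFL}.
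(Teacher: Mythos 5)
Your proposal is correct and follows essentially the same route as the paper: Jensen's inequality plus the entropy condition for \eqref{lemm 3.5.1}, the Taylor identity \eqref{Taylor} for \eqref{lemm 3.5.2}, the global energy balance (which the paper delegates to the argument of \cite[(6.10)]{T1} with $J$ in place of $\eta$, together with $u^n_j=E^n_j(u)+o({\varDelta}x)$ from \eqref{remark2.2} and Theorem \ref{thm:average}) for \eqref{lemm 3.5.3}, $J\geq0$ for \eqref{lemm 3.5.4}, and $R^n_j\geq0$ for \eqref{lemm 3.5.5}. Your spelled-out telescoping of $\int J(u^{\varDelta})\,dx$ via \eqref{mass-conservation} is just a more explicit rendering of the cited argument, so no substantive difference remains.
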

\begin{proof}
	We recall that our approximate solutions are constructed 
	in $[0,T]$ for any fixed positive constant $T$. From \eqref{initial bar rho} and the finite propagation, we find that 
	our approximate solutions are $(\bar{\rho},0)$ outside a finite interval.

	First, from the Jensen inequality and the entropy condition, we 
	obtain \eqref{lemm 3.5.1}. 
	
	Second, from \eqref{Taylor}, we have \eqref{lemm 3.5.2}.

	Finally, we consider \eqref{lemm 3.5.3}. From the similar argument to \cite[(6.10)]{T1}, taking 
	$J(u)$ as $\eta$ in \cite{T1}, we have 
	\begin{align*}
		&\sum^n_{k=0}\int^{\infty}_{-\infty}\left\{\eta_{\ast}({u}^{ \varDelta}(x,t_{k-0}))-\eta_{\ast}(
		{u}^{\varDelta}_{n,0}(x))\right\}dx+\int^{t_n}_{0}\sum_{x\in{\bf R}}(\sigma[\eta_{\ast}]-[q_{\ast}])dt\\
		&=
		\int^{\infty}_{-\infty}\left\{J\left({u}^{\varDelta}(x,t_{0-})\right)-J\left({u}^{\varDelta}(x,t_{n+})\right)\right\}dx
		+o({\varDelta}x).
	\end{align*}
	On the other hand, \eqref{remark2.2} and Theorem \ref{thm:average}, we 
	find that $u^n_j=E^n_j(u)+o({\varDelta}x)$. Recalling \eqref{E^n(x;u)} and \eqref{def-u0}, we have 
	\begin{align*}
		&\sum^n_{k=0}\int^{\infty}_{-\infty}\left\{\eta_{\ast}({u}^{ \varDelta}(x,t_{k-0}))-\eta_{\ast}(
		{u}^{\varDelta}_{n,0}(x))\right\}dx\\
		&=\sum^n_{k=0}\int^{\infty}_{-\infty}\left\{\eta_{\ast}({u}^{ \varDelta}(x,t_{k-0}))-\eta_{\ast}(
		E^k(x;u))\right\}dx
		+o({\varDelta}x).
	\end{align*}
Finally, observing $R^n_j(x)\geq0$, from \eqref{functional discontinuity} and \eqref{lemm 3.5.4}, 
we have \eqref{lemm 3.5.5}.
\end{proof}

\section{The $L^{\infty}$ estimate of the approximate solutions}
\label{sec:bound}
The aim in this section is to deduce from \eqref{remark2.2} the following
theorem:

\begin{theorem}\label{thm:bound}
For $j\in2{\bf Z}_{\geq0},n\in{\bf Z}_{\geq0}$ and $x_{j-1}\leq x< x_{j+1}$, 
	\begin{align}
		\begin{alignedat}{2}
			&\displaystyle {z}^{\varDelta}(x,t_{n+1-})&\geq&-M_{n+1}-E_0-L_n
			+\int^x_{-\infty}J({u}^{\varDelta}(y,t_{n+1-}))dy-{\it o}({\varDelta}x),\\
			&\displaystyle {w}^{\varDelta}(x,t_{n+1-})
			&\leq&M_{n+1} +L_n+\int^x_{-\infty}J({u}^{\varDelta}(y,t_{n+1-}))dy\\&&&+\int^{t_{n+1}}_{t_n}\sum_{x<x_{j+1}}(\sigma[\eta_{\ast}]-[q_{\ast}])dt+{\it o}({\varDelta}x),
		\end{alignedat}
		\label{goal}
	\end{align}
	where 
	$t_{n-}=n{\varDelta}t-0$, $M_{n+1}$ is defined in \eqref{M_n}, ${\it o}({\varDelta}x)$ 
	depends only on $M_0$ and $E_0$, $\varepsilon$ and $\delta$ are found in 
\eqref{delta}. 
\end{theorem}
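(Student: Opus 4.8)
The plan is to carry out, at the level of the approximate solution, the invariant-region argument used in the Proof of Theorem \eqref{maintheorem2}, propagating across a single cell the lower/upper bounds \eqref{remark2.2} that the cell data $u^n_j$ satisfy by the very definition \eqref{def-u^n_j}. Fix $n$ and a cell. In each of the smooth subregions produced by the construction \eqref{appro1}--\eqref{appr-M}, the approximate Riemann invariants were built so that $z^{\varDelta}=\tilde z^{\varDelta}+\int^x_{-\infty}J\,dy$ and $w^{\varDelta}=\tilde w^{\varDelta}+\int^x_{-\infty}J\,dy$ reproduce the transport structure \eqref{Riemann1}; by Remark \ref{rem:approximate} the functions $\tilde z^{\varDelta},\tilde w^{\varDelta}$ solve $\tilde z^{\varDelta}_t+\lambda_1\tilde z^{\varDelta}_x=g_1+o(1)$ and $\tilde w^{\varDelta}_t+\lambda_2\tilde w^{\varDelta}_x=g_2+o(1)$ along the piecewise-constant characteristics of the rarefaction fan. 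The whole estimate \eqref{goal} therefore reduces to bounding $\tilde z^{\varDelta}$ from below by $-M_{n+1}-E_0-L_n$ and $\tilde w^{\varDelta}$ from above by $M_{n+1}+L_n$ (plus the recorded discontinuity integral), modulo $o(\varDelta x)$.

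For the decaying regime $M_n+L_n\ge \frac{(\bar\rho)^{\theta}}{\theta}+\varepsilon$, in which \eqref{M_n} sets $M_{n+1}=M_n-\delta\varDelta t$, I follow \eqref{transformation2} and work with $\hat z^{\varDelta}=\tilde z^{\varDelta}-\delta(t-t_n)$, which satisfies $\hat z^{\varDelta}_t+\lambda_1\hat z^{\varDelta}_x=g_1-\delta+o(1)$. Arguing by contradiction exactly as in \eqref{invariant1}--\eqref{g1check}: if $\hat z^{\varDelta}$ first reaches the level $-M_n-E_0-L_n$ at an interior point, then there $\tilde z^{\varDelta}=-M_n-E_0-L_n+\delta(t-t_n)\le -\frac{(\bar\rho)^{\theta}}{\theta}-E_0-\varepsilon+\delta\varDelta t\le-\frac{(\bar\rho)^{\theta}}{\theta}-E_0-\frac{\varepsilon}{2}$ once $\varDelta t$ is small, so \eqref{delta} gives $g_1>2\delta$ and hence $g_1-\delta>\delta>0$; since $\hat z^{\varDelta}$ attains an interior minimum this contradicts its transport equation. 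Thus $\hat z^{\varDelta}\ge-M_n-E_0-L_n$ throughout the cell, and evaluating at $t_{n+1-}$ gives $\tilde z^{\varDelta}\ge-(M_n-\delta\varDelta t)-E_0-L_n=-M_{n+1}-E_0-L_n$. The symmetric argument for $\hat w^{\varDelta}=\tilde w^{\varDelta}+\delta(t-t_n)$ with $g_2<-2\delta$ delivers the upper bound on $\tilde w^{\varDelta}$.

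In the complementary regime $M_n+L_n<\frac{(\bar\rho)^{\theta}}{\theta}+\varepsilon$, \eqref{M_n} keeps $M_{n+1}=M_n$, and it suffices to show that $\tilde z^{\varDelta}$ does not drop below $-M_n-E_0-L_n$. Here I use the same minimum-principle contradiction but with $\delta=0$ and the nonstrict sign $g_1\ge0$ from Proposition \ref{proposition}, which applies because $M_n\ge\frac{(\bar\rho)^{\theta}}{\theta}+\varepsilon-\delta\varDelta t$ forces the touching level to satisfy $-M_n-E_0-L_n\le-\frac{(\bar\rho)^{\theta}}{\theta}-E_0$. In both regimes the value of $M_{n+1}$ prescribed by \eqref{M_n} is precisely the one the argument yields, which is why the scheme encodes the dichotomy in exactly this form.

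The main obstacle is that $u^{\varDelta}$ is only piecewise smooth, so the clean maximum principle must be shown to survive the crossing of the discretized rarefaction rays and of the $1$- and $2$-shocks, together with the cell-averaging and cut-off at $t_{n+1-}$. I would control these by using that the construction enforces the Rankine--Hugoniot relations at the midtime $t_{n.5}$ (Remark \ref{rem:middle-time}), so that the jumps of the Riemann invariants across the internal discontinuities, when summed over the $p=O((\varDelta x)^{-\alpha})$ rarefaction rays of \eqref{order-p} and combined with Theorem \ref{thm:average} (giving $u^n_j=E^n_j(u)+o(\varDelta x)$), accumulate only to $o(\varDelta x)$. The genuine energy dissipation across shocks is not an error term but is tracked explicitly by the corrections $\int^{t}_{t_{n.5}}\sum(\sigma[\eta_{\ast}]-[q_{\ast}])$ appearing in \eqref{appr-k}--\eqref{appr-M}; verifying that these reassemble into the $+L_n$ on the $w$-side, the $-L_n$ on the $z$-side, and the single boundary integral $\int^{t_{n+1}}_{t_n}\sum_{x<x_{j+1}}(\sigma[\eta_{\ast}]-[q_{\ast}])\,dt$ on the right of \eqref{goal}, without eroding the $\delta\varDelta t$ gain produced above, is the crux of the rigorous proof.
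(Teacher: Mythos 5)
Your plan assembles the right ingredients --- the cut-off \eqref{remark2.2} at the cell entry, the dichotomy \eqref{M_n}, the quantitative sign condition \eqref{delta} yielding the $\delta{\varDelta}t$ gain, and the $o({\varDelta}x)$ accumulation over the $p=O(({\varDelta}x)^{-\alpha})$ fan rays via \eqref{w_i-w_{i+1}} --- but the mechanism differs from the paper's, and the difference matters. The paper never runs a first-touching-point/minimum-principle argument inside the cell. The source terms $g_i(\cdot)(t-t_n)$, resp.\ $g_i(\cdot)(t-t_{n.5})$, are hard-wired into the definitions \eqref{appro1} and \eqref{appr-k} of ${z}^{\varDelta}_k,{w}^{\varDelta}_k$, so $\eqref{goal}_2$ is obtained by \emph{direct evaluation} of these formulas at $t_{n+1-}$, combined with a dichotomy on the entry value of each piece: if ${w}^{\varDelta}_1(x,t_{n-})<M_n+L_n+I^n_j-\sqrt{{\varDelta}x}$, the $O({\varDelta}x)$ increments cannot bridge the $\sqrt{{\varDelta}x}$ gap; otherwise ${w}^{\varDelta}_1\geq\frac{(\bar{\rho})^{\theta}}{\theta}+\varepsilon/2$ \emph{uniformly on the piece}, so $g_2<-2\delta$ everywhere there and the explicit term $g_2\cdot(t-t_n)$ supplies the decay. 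Your continuous touching argument must instead confront the facts that the sectors carrying ${u}^{\varDelta}_k$ $(k\geq2)$, ${u}^{\varDelta}_{\rm M}$, ${u}^{\varDelta}_{\rm R}$ open up from the single point $(x_j,t_n)$ and have no data at $t=t_{n+}$ to anchor a ``first touching time'' (their constants are fixed only at $t_{n.5}$ via Rankine--Hugoniot), and that the touching point may lie on a fan ray or on the $2$-shock where no transport equation holds; repairing this forces exactly the induction on $k$ with hypothesis \eqref{assumption} that the paper performs, at which point the touching argument is superfluous.

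Two concrete gaps remain. First, in the regime $M_n+L_n<\frac{(\bar{\rho})^{\theta}}{\theta}+\varepsilon$ you propose to use only the nonstrict sign $g_1\geq0$ from Proposition \ref{proposition}; this cannot close the contradiction, because the residual in Remark \ref{rem:approximate} is only $o(1)$ of unknown sign, so at the touching point you get $0\geq g_1+o(1)\geq o(1)$ and no contradiction. The correct move (and the paper's case (iii)) is to note that $M_n+L_n\geq\frac{(\bar{\rho})^{\theta}}{\theta}+\varepsilon-\delta{\varDelta}t$ still places the touching level below $-\frac{(\bar{\rho})^{\theta}}{\theta}-E_0-\varepsilon/2$ for small ${\varDelta}x$, so \eqref{delta} again gives the strict margin $g_1>2\delta$. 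Second, the step you defer as ``the crux'' --- verifying that $V(u_{\rm L})(t-t_n)$, the dissipation sums $\int\sum(\sigma[\eta_{\ast}]-[q_{\ast}])$, and the integrals of $J$ inserted in \eqref{appr-k}--\eqref{appr-M} recombine via \eqref{mass-conservation} into $\int^x_{-\infty}J({u}^{\varDelta}(y,t_{n+1-}))dy$ plus the boundary integral in $\eqref{goal}_2$, with $L_n$ bounded by Lemma \ref{lem:error} --- is the substantive content of the theorem, not a routine check; the accumulated dissipation is an $O(1)$ quantity that must appear explicitly as $+L_n$ and cannot be absorbed into $o({\varDelta}x)$. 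As written, the proposal outlines a strategy rather than a proof.
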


\begin{theorem}\label{thm:average}
We assume that $u^{\varDelta}(x,t)$ satisfies \eqref{maintheorem1} and \eqref{goal}.
Then, if $E^{n}_j(\rho)\geq({\varDelta}x)^{\mu}$, it holds that
	\begin{align}
		\begin{split} 
			&-M_{n}-E_0-L_{n}+I^{n}_j
			-{\it o}({\varDelta}x)\leq{z}(E_{j}^{n}(u)),\\
			&w(E^{n}_j(u))\leq M_{n}+L_{n}+I^{n}_j+{\it o}({\varDelta}x),
		\end{split}
		\label{average}
	\end{align}
	where  $j\in2{\bf Z}$ and 
	${\it o}({\varDelta}x)$ depends only on $M_0$. 
\end{theorem}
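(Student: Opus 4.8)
The plan is to deduce the averaged bounds \eqref{average} from the pointwise bounds \eqref{goal} by combining Jensen's inequality for the convex energy $J$ with the convexity of the invariant regions $\{z\ge c\}$ and $\{w\le c\}$ in the $(\rho,m)$-plane. First I would invoke \eqref{goal} at the preceding time level (that is, with $n$ replaced by $n-1$), which supplies, for a.e.\ $x\in[x_{j-1},x_{j+1})$, the estimates $z(u^{\varDelta}(x,t_{n-}))\ge -M_n-E_0-L_{n-1}+\int^x_{-\infty}J(u^{\varDelta}(y,t_{n-}))\,dy-o(\varDelta x)$ and the companion upper bound for $w(u^{\varDelta}(x,t_{n-}))$, the latter carrying the additional dissipation term $\int^{t_n}_{t_{n-1}}\sum_{x<x_{j+1}}(\sigma[\eta_\ast]-[q_\ast])\,dt$. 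Since $\sigma[\eta_\ast]-[q_\ast]\ge0$ by the entropy condition, the residuals $R^k_j$ in \eqref{functional discontinuity} are nonnegative by convexity of $\eta_\ast$, and the Jensen defect $\eta_\ast(u^{\varDelta})-\eta_\ast(E^k(x;u))$ is nonnegative as well, the functional $L_n$ is nondecreasing in $n$. The case $n=0$ is handled directly from \eqref{maintheorem1} and the definition \eqref{M_0} of $M_0$.

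Second, I would pass from the $x$-dependent running integral to the discrete quantity $I^n_j$. Taking the infimum (resp.\ supremum) of $\int^x_{-\infty}J\,dy$ over the cell, which by $J\ge0$ is attained at the left (resp.\ right) endpoint, yields a constant valid over the whole cell. The cell-by-cell Jensen inequality $2\varDelta x\,J(E^n_l(u))\le\int_{[x_{l-1},x_{l+1})}J(u^{\varDelta})\,dy$ then compares $\int^{x_{j\mp1}}_{-\infty}J(u^{\varDelta})\,dy$ with $I^n_j=\int^{x_j}_{-\infty}J(E^n(x;u))\,dx$. The crucial structural point is that the increment $L_n-L_{n-1}$ contains, as separate nonnegative summands, both the total Jensen defect $\int\{\eta_\ast(u^{\varDelta}(\cdot,t_{n-}))-\eta_\ast(E^n(\cdot;u))\}\,dx$ (which equals the Jensen defect for $J$, since $J$ differs from $\eta_\ast$ by a linear function) and the dissipation $\int^{t_n}_{t_{n-1}}\sum_x(\sigma[\eta_\ast]-[q_\ast])\,dt$; hence the accumulated gaps and the dissipation term above, both partial sums of these, are dominated by $L_n-L_{n-1}$, which converts the factor $L_{n-1}$ in the pointwise bound into $L_n$ in the target. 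What remains is an isolated boundary half-cell contribution $\varDelta x\,J(E^n_j(u))$ of order $\varDelta x$, so the estimates acquire the cell-uniform form $z(u^{\varDelta}(x,t_{n-}))\ge -M_n-E_0-L_n+I^n_j-o(\varDelta x)$ and $w(u^{\varDelta}(x,t_{n-}))\le M_n+L_n+I^n_j+o(\varDelta x)$.

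The decisive step is the passage from these pointwise bounds to bounds for the average $E^n_j(u)$. Convexity of $J$ is not enough here, because $z$ and $w$ are neither convex nor concave in $u$ (their Hessians are indefinite); instead I would use that each superlevel set $\{(\rho,m):\rho\ge0,\ z(u)\ge c\}$ and each sublevel set $\{(\rho,m):\rho\ge0,\ w(u)\le c\}$ is convex. Indeed, for $\rho>0$ the first is the epigraph $\{m\ge c\rho+\rho^{1+\theta}/\theta\}$ of a convex function of $\rho$, and the second the hypograph $\{m\le c\rho-\rho^{1+\theta}/\theta\}$ of a concave function; moreover the vacuum point $(0,0)$ lies in both, so sub-cells on which $u^{\varDelta}$ approaches vacuum cause no trouble. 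The cell-uniform bounds say precisely that $u^{\varDelta}(x,t_{n-})$ lies, for a.e.\ $x$ in the cell, in the intersection of these two convex regions, and since $E^n_j(u)$ is the integral mean of $u^{\varDelta}(\cdot,t_{n-})$ over the cell, it belongs to the same convex set; reading off the defining inequalities gives \eqref{average}. The hypothesis $E^n_j(\rho)\ge(\varDelta x)^{\mu}$ from \eqref{eqn:mu} guarantees that this mean is away from vacuum, so that $z(E^n_j(u))$ and $w(E^n_j(u))$ are well defined.

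The main obstacle I anticipate is not the convexity geometry but the uniform error bookkeeping: one must verify that the isolated boundary half-cell terms, the accumulated Jensen gaps beyond what $L_n-L_{n-1}$ absorbs, and the $o(\varDelta x)$ already present in \eqref{goal} all assemble into a single $o(\varDelta x)$ depending only on $M_0$, uniformly over the cells and over the finitely many time levels in $[0,T]$. This dovetails with the uniform bound $L_n\le C$ from Lemma \ref{lem:error} and with Remark \ref{rem:E}, which guarantees that the cut-off in the definition of $u^n_j$ perturbs the average by no more than $o(\varDelta x)$; reconciling the per-cell order $\varDelta x$ of the boundary contribution with the claimed $o(\varDelta x)$ is the most delicate quantitative point.
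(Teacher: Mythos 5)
Your first two steps are sound: invoking \eqref{goal} at level $n-1$, observing that the Jensen defect of $\eta_{\ast}$ (equivalently of $J$, which differs from $\eta_{\ast}$ by an affine function) and the entropy dissipation are exactly the nonnegative summands that turn $L_{n-1}$ into $L_n$, and noting that $\{\rho\geq0,\ z\geq c\}\cap\{\rho\geq0,\ w\leq c'\}$ is convex so that a cell average inherits bounds with \emph{constant} thresholds. The gap is in your decisive step. The threshold in \eqref{goal} is not constant on the cell: it contains $\int^x_{-\infty}J(u^{\varDelta}(y,t_{n-}))\,dy$, which increases by as much as $\int^{x_{j+1}}_{x_{j-1}}J(u^{\varDelta})\,dy\approx 2{\varDelta}x\,J(E^n_j(u))$ across $[x_{j-1},x_{j+1})$. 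Freezing it at the worst endpoint, as you propose, and then comparing with $I^n_j=\int^{x_j}_{-\infty}J(E^n(\cdot;u))\,dx$ --- which by construction credits only the \emph{half} cell up to the midpoint $x_j$, i.e.\ only ${\varDelta}x\,J(E^n_j(u))$ --- leaves an uncancelled excess of exactly ${\varDelta}x\,J(E^n_j(u))$ in each of the two inequalities. Since $E^n_j(u)$ is in general far from $(\bar{\rho},0)$, $J(E^n_j(u))$ is an order-one positive constant, so this excess is $O({\varDelta}x)$ and not $o({\varDelta}x)$; it is not absorbed by $L_n-L_{n-1}$ (take $u^{\varDelta}$ constant on the cell: all Jensen defects and all $R^n_j$ vanish while $J(E^n_j(u))>0$). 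The distinction matters: as Remark \ref{rem:E} indicates, an $O({\varDelta}x)$ cut per cell per time step accumulates to an $O(1)$ conservation defect over the $O(1/{\varDelta}t)$ steps, destroying the convergence argument. You flag this as ``the most delicate quantitative point,'' but the constant-threshold convexity argument has no mechanism to recover the missing factor $\tfrac12$.

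The paper's proof is built precisely to capture that half cell. It introduces the shifted invariants ${w}^{\varDelta}_{\dagger},{z}^{\varDelta}_{\dagger}$, and an integration by parts (the computation of $A_{31}$, yielding $A_{31}=\tfrac12\times\cdots$) produces identity \eqref{lemma 4.1}, in which the midpoint integral appears with the correct coefficient. The averaging against the genuinely $x$-dependent threshold $A(x,t_{n-})$ is then carried out by Lemma \ref{lem:average1}, a weighted Jensen inequality proved in Appendix B via the weights $\{A(x,T)\}^{2/(\gamma-1)}$ and $\{A(x,T)\}^{(\gamma+1)/(\gamma-1)}$ --- this is the technical core of the theorem and has no counterpart in your proposal --- and Lemma \ref{lem:average2} disposes of the resulting cross terms $B_1,B_2$ coming from the Taylor expansion of $\eta_{\ast}$ about $E^n_j(u)$. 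To repair your argument you would need either this weighted Jensen machinery or some other device that averages the $x$-dependent threshold rather than replacing it by its extremum over the cell.
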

\eqref{average} is needed to ensures 
\eqref{remark2.2}.

\vspace*{5pt}
Throughout this paper, by the Landau symbols such as $O({\varDelta}x)$,
$O(({\varDelta}x)^2)$ and $o({\varDelta}x)$,
we denote quantities whose moduli satisfy a uniform bound depending only on $M_0$ and $E_0$ unless we specify them.  

Now, in the previous section, we have constructed 
$u^{\varDelta}(x,t)$ in {\bf Case 1}. When we consider $L^{\infty}$ estimates in this case, main difficulty is to obtain $(\ref{goal})_2$ along $R^{\varDelta}_1$. Therefore, we are concerned with $(\ref{goal})_2$ along $R^{\varDelta}_1$.

\subsection{Proof of Theorem \ref{thm:average}.}
We first observe Theorem \ref{thm:average}. 
For $x\in[x_{j-1},x_{j+1}]$, we set 

\begin{align*}
	{z}^{\varDelta}_{\dagger}(x,t_{n-})=&{z}^{\varDelta}(x,t_{n-})-\int^x_{-\infty}
	J\left({u}^{\varDelta}(y,t_{n-})\right)dy+\int^x_{x_{j-1}}
	\eta_{\ast}\left({u}^{\varDelta}(y,t_{n-})\right)dy\\& 
	-\int^x_{x_{j-1}}a^{n}_j{\rho}^{\varDelta}(y,t_{n-})dy
	+\int^x_{x_{j-1}}\dfrac{\left(\bar{\rho}\right)^{\gamma}}{\gamma}dy,\\
	{w}^{\varDelta}_{\dagger}(x,t_{n-})=&{w}^{\varDelta}(x,t_{n-})
	-\int^x_{-\infty}
	J\left({u}^{\varDelta}(y,t_{n-})\right)dy+\int^x_{x_{j-1}}
	\eta_{\ast}\left({u}^{\varDelta}(y,t_{n-})\right)dy \\&
	-\int^x_{x_{j-1}}a^{n}_j{\rho}^{\varDelta}(y,t_{n-})dy
	+\int^x_{x_{j-1}}\dfrac{\left(\bar{\rho}\right)^{\gamma}}{\gamma}dy,
\end{align*}
where 

\begin{align}
a^{n}_j=\dfrac{\partial\eta_{\ast}}{\partial\rho}(E^n_j(u))+
\dfrac{\partial\eta_{\ast}}{\partial m}(E^n_j(u))
\left\{E^n_j(v)-\left(E^n_j(\rho)\right)^{\theta}\right\},\quad E^n_j(v):=\dfrac{E^n_j(m)}{E^n_j(\rho)}.
\label{a^n_j}
\end{align}

Then, by the relation \eqref{relation-Riemann}, we define ${\rho}^{\varDelta}_{\dagger}(x,t_{n-})$ and ${v}^{\varDelta}_{\dagger}(x,t_{n-})$.
We notice that 
\begin{align*}
	{\rho}^{\varDelta}_{\dagger}(x,t_{n-})=&{\rho}^{\varDelta}(x,t_{n-}),\\
	{v}^{\varDelta}_{\dagger}(x,t_{n-})=&{v}^{\varDelta}(x,t_{n-})-\int^x_{-\infty}
	J\left({u}^{\varDelta}(y,t_{n-})\right)dy+\int^x_{x_{j-1}}
	\eta_{\ast}\left({u}^{\varDelta}(y,t_{n-})\right)dy \\&
	-\int^x_{x_{j-1}}a^{n}_j{\rho}^{\varDelta}(y,t_{n-})dy
	+\int^x_{x_{j-1}}\dfrac{\left(\bar{\rho}\right)^{\gamma}}{\gamma}dy.
\end{align*}

Since $L_n$ is positive, $\eqref{average}_2$ is more difficult than $\eqref{average}_1$. 
We thus treat with only $\eqref{average}_2$ in this proof.

\begin{align*}w(E^{n}_j(u))
	=&\frac{\displaystyle \frac1{2{\varDelta}x}\int^{x_{j+1}}_{x_{j-1}}m^{\varDelta}(x,t_{n-})dx+\left(\frac1{2{\varDelta}x}\int^{x_{j+1}}_{x_{j-1}}{\rho}^{\varDelta}(x,t_{n-})dx\right)^{\theta}/\theta
	}{\displaystyle \frac1{2{\varDelta}x}\int^{x_{j+1}}_{x_{j-1}}{\rho}^{\varDelta}(x,t_{n-})dx}
	\\
	=&\frac{\displaystyle\frac1{2{\varDelta}x}\int^{x_{j+1}}_{x_{j-1}}{m}^{\varDelta}_{\dagger}(x,t_{n-})dx+\left(\frac1{2{\varDelta}x}\int^{x_{j+1}}_{x_{j-1}}{\rho}^{\varDelta}_{\dagger}(x,t_{n-})dx\right)^{\theta}/\theta
	}{\displaystyle\frac1{2{\varDelta}x}\int^{x_{j+1}}_{x_{j-1}}{\rho}^{\varDelta}_{\dagger}(x,t_{n-})dx}\\
	&+\frac{\displaystyle \frac1{2{\varDelta}x}\int^{x_{j+1}}_{x_{j-1}}{\rho}^{\varDelta}(x,t_{n-})\left\{	\int^{x_{j-1}}_{-\infty}
		\eta_{\ast}\left({u}^{\varDelta}(y,t_{n-})\right)dy
		\right\}dx
	}{\displaystyle \frac1{2{\varDelta}x}\int^{x_{j+1}}_{x_{j-1}}{\rho}^{\varDelta}(x,t_{n-})dx}
\end{align*}
\begin{align*}
	&-\frac{\displaystyle \frac1{2{\varDelta}x}\int^{x_{j+1}}_{x_{j-1}}{\rho}^{\varDelta}(x,t_{n-})
		\left\{\left(\frac{\left(\bar{\rho}\right)^{\gamma-1}}{\gamma-1}-a^{n}_j\right)	\int^x_{x_{j-1}}
		{\rho}^{\varDelta}(y,t_{n-})dy                \right\}dx
	}{\displaystyle \frac1{2{\varDelta}x}\int^{x_{j+1}}_{x_{j-1}}{\rho}^{\varDelta}(x,t_{n-})dx}
	\\
	&-\frac{\left(\bar{\rho}\right)^{\gamma-1}}{\gamma-1}\int^{x_{j-1}}_{-\infty}{\rho}^{\varDelta}(x,t_{n-})dx
	+\frac{\displaystyle \frac1{2{\varDelta}x}\int^{x_{j+1}}_{x_{j-1}}{\rho}^{\varDelta}(x,t_{n-})\left\{
		\int^{x_{j-1}}_{-\infty}
		\dfrac{\left(\bar{\rho}\right)^{\gamma}}{\gamma}dy\right\}dx
	}{\displaystyle \frac1{2{\varDelta}x}\int^{x_{j+1}}_{x_{j-1}}{\rho}^{\varDelta}(x,t_{n-})dx}
	\\
	=&A_1+A_2-A_3+A_4.	
\end{align*}

We denote the numerator of $A_3$ by $A_{31}$.
From the integration by parts, we have
\begin{align*}
	A_{31=}&\frac1{2{\varDelta}x}\int^{x_{j+1}}_{x_{j-1}}{\rho}^{\varDelta}(x,t_{n-})dx\times\left(\frac{\left(\bar{\rho}\right)^{\gamma-1}}{\gamma-1}\rho-a^{n}_j\right)	\int^{x_{j+1}}_{x_{j-1}}
	{\rho}^{\varDelta}(y,t_{n-})dx\\
	&-\frac1{2{\varDelta}x}\int^{x_{j+1}}_{x_{j-1}}\left\{\int^x_{x_{j-1}}
	{\rho}^{\varDelta}(y,t_{n-})dy \right\}\left(\frac{\left(\bar{\rho}\right)^{\gamma-1}}{\gamma-1}\rho-a^{n}_j\right) {\rho}^{\varDelta}(x,t_{n-})dx\\
	=&\frac1{2{\varDelta}x}\int^{x_{j+1}}_{x_{j-1}}{\rho}^{\varDelta}(x,t_{n-})dx\times\left(\frac{\left(\bar{\rho}\right)^{\gamma-1}}{\gamma-1}\rho-a^{n}_j\right)	\int^{x_{j+1}}_{x_{j-1}}
	{\rho}^{\varDelta}(x,t_{n-})dx-A_{31}.
\end{align*}
We thus obtain 
\begin{align*}
	A_{31}=&\frac12\times
	\frac1{2{\varDelta}x}\int^{x_{j+1}}_{x_{j-1}}{\rho}^{\varDelta}(x,t_{n-})dx\times\left(\frac{\left(\bar{\rho}\right)^{\gamma-1}}{\gamma-1}\rho-a^{n}_j\right)	\int^{x_{j+1}}_{x_{j-1}}
	{\rho}^{\varDelta}(y,t_{n-})dx.
\end{align*}

Therefore, we obtain 
\begin{align}
	\begin{alignedat}{1}
		w(E^{n}_j(u))
		=&\frac{\displaystyle\frac1{2{\varDelta}x}\int^{x_{j+1}}_{x_{j-1}}{m}^{\varDelta}_{\dagger}(x,t_{n-})+\left(\frac1{2{\varDelta}x}\int^{x_{j+1}}_{x_{j-1}}{\rho}^{\varDelta}_{\dagger}(x,t_{n-})dx\right)^{\theta}/\theta
		}{\displaystyle\frac1{2{\varDelta}x}\int^{x_{j+1}}_{x_{j-1}}{\rho}^{\varDelta}_{\dagger}(x,t_{n-})dx}\\
		&+\int^{x_{j-1}}_{-\infty}
	J\left({u}^{\varDelta}(x,t_{n-})\right)dx
		-\dfrac{\frac{\left(\bar{\rho}\right)^{\gamma-1}}{\gamma-1}-a^{n}_j}2	\int^{x_{j+1}}_{x_{j-1}}
		{\rho}^{\varDelta}(x,t_{n-})dx.
	\end{alignedat}\label{lemma 4.1}
\end{align}

Here we introduce the following lemma. The proof is postponed to Appendix A. 
\begin{lemma}\label{lem:average1}
	If
	\begin{align}
		\begin{alignedat}{1}
			\frac1{2{\varDelta}x}\int_{x_{j-1}}
			^{x_{j+1}}{\rho}^{\varDelta}_{\dagger}(x,t_{n-})
			dx\geq ({\varDelta}x)^{\mu}
		\end{alignedat}
		\label{assumption-average2}	
	\end{align}
	and \begin{align}
		{w}^{\varDelta}_{\dagger}(x,t_{n-})\leq& M_{n}+L_{n-1}+\int^x_{x_{j-1}}
		\eta_{\ast}\left({u}^{\varDelta}(y,t_{n-})\right)dy-\int^x_{x_{j-1}}a^{n}_j{\rho}^{\varDelta}(y,t_{n-})dy\nonumber 
		\\&+\int^x_{x_{j-1}}
		\dfrac{\left(\bar{\rho}\right)^{\gamma}}{\gamma}dy+\int^{t_{n}}_{t_{n-1}}\sum_{x<x_{j-1}}(\sigma[\eta_{\ast}]-[q_{\ast}])dt+o({\varDelta}x)\nonumber\\
		=&:
		A(x,t_{n-})+o({\varDelta}x)\hspace*{2.0ex}(x\in [x_{j-1},x_{j+1}]),
		\label{def-A}
	\end{align}
	the following holds
	\begin{align*}
		w(E_j^{n}({u}^{\varDelta}_{\dagger}))
		\leq 
		\bar{A}_j(t_{n-})+o({\varDelta}x),
	\end{align*}
	where $\displaystyle E_j^{n}({u}^{\varDelta}_{\dagger})=\frac1{2{\varDelta}x}\int^{x_{j+1}}_{x_{j-1}}
	u^{\varDelta}_{\dagger}(x,t_{n-})dx,\; \bar{A}_j(t_{n-})=\frac1{2{\varDelta}x}\int^{x_{j+1}}_{x_{j-1}}A(x,t_{n-})dx
	$, the definition of $\mu$ is found in \eqref{eqn:mu}.
\end{lemma}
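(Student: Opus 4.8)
The plan is to transfer the pointwise bound \eqref{def-A} on $w^{\varDelta}_{\dagger}$ to the cell average $w(E_j^{n}(u^{\varDelta}_{\dagger}))$ by passing through the momentum and then invoking the convexity of $\rho\mapsto\rho^{1+\theta}$. Throughout write $\langle f\rangle:=\frac1{2{\varDelta}x}\int_{x_{j-1}}^{x_{j+1}}f(x,t_{n-})\,dx$ for the cell average, so that $\bar A_j(t_{n-})=\langle A\rangle$, and recall from the construction via \eqref{relation-Riemann} that $\rho^{\varDelta}_{\dagger}=\rho^{\varDelta}\ge0$, $m^{\varDelta}_{\dagger}=\rho^{\varDelta}_{\dagger}v^{\varDelta}_{\dagger}$, and $w^{\varDelta}_{\dagger}=v^{\varDelta}_{\dagger}+(\rho^{\varDelta}_{\dagger})^{\theta}/\theta$ pointwise.

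First I would read \eqref{def-A} as a bound on the velocity, $v^{\varDelta}_{\dagger}\le A(x,t_{n-})-(\rho^{\varDelta}_{\dagger})^{\theta}/\theta+o({\varDelta}x)$, multiply by the non-negative weight $\rho^{\varDelta}_{\dagger}$, and average over the cell, obtaining
\begin{align*}
\langle m^{\varDelta}_{\dagger}\rangle\le\langle\rho^{\varDelta}_{\dagger}A\rangle-\tfrac1\theta\langle(\rho^{\varDelta}_{\dagger})^{1+\theta}\rangle+o({\varDelta}x)\langle\rho^{\varDelta}_{\dagger}\rangle.
\end{align*}
Since $w(E_j^{n}(u^{\varDelta}_{\dagger}))=\langle m^{\varDelta}_{\dagger}\rangle/\langle\rho^{\varDelta}_{\dagger}\rangle+\langle\rho^{\varDelta}_{\dagger}\rangle^{\theta}/\theta$, and since \eqref{assumption-average2} permits dividing by $\langle\rho^{\varDelta}_{\dagger}\rangle\ge({\varDelta}x)^{\mu}>0$, the desired conclusion $w(E_j^{n}(u^{\varDelta}_{\dagger}))\le\bar A_j(t_{n-})+o({\varDelta}x)$ reduces, equivalently after multiplying through by $\langle\rho^{\varDelta}_{\dagger}\rangle$, to the single covariance-versus-variance inequality
\begin{align*}
\langle\rho^{\varDelta}_{\dagger}A\rangle-\langle\rho^{\varDelta}_{\dagger}\rangle\langle A\rangle\le\tfrac1\theta\big(\langle(\rho^{\varDelta}_{\dagger})^{1+\theta}\rangle-\langle\rho^{\varDelta}_{\dagger}\rangle^{1+\theta}\big)+o({\varDelta}x)\langle\rho^{\varDelta}_{\dagger}\rangle.
\end{align*}

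It remains to establish this inequality. Its right-hand side is the convexity defect of $\rho\mapsto\rho^{1+\theta}$, which is non-negative by Jensen because $1+\theta>1$; on the left only the $x$-dependence of $A$ enters, and by \eqref{def-A} the map $x\mapsto A(x,t_{n-})$ has derivative $\eta_{\ast}(u^{\varDelta})-a^{n}_j\rho^{\varDelta}+(\bar\rho)^{\gamma}/\gamma$, so $A$ oscillates only by $O({\varDelta}x)$ across the cell and the left-hand covariance is $O({\varDelta}x)$. When $\rho^{\varDelta}$ has order-one variation over the cell, the convexity defect is bounded below and the inequality is immediate for small ${\varDelta}x$; the delicate regime is when $\rho^{\varDelta}$ is nearly constant, where both sides are small and one must Taylor-expand. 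The precise choice of $a^{n}_j$ in \eqref{a^n_j} is designed exactly so that the leading, first-order correlation between $A$ and $\rho^{\varDelta}$ is absorbed into the quadratic convexity defect.

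I expect the main obstacle to be this covariance-versus-variance estimate together with the attendant error bookkeeping. The point is that after dividing by $\langle\rho^{\varDelta}_{\dagger}\rangle$, which by \eqref{assumption-average2} may be as small as $({\varDelta}x)^{\mu}$ with $\mu>1$, an error of size $O(({\varDelta}x)^{2})$ would degrade to $O(({\varDelta}x)^{2-\mu})$, which is not $o({\varDelta}x)$. Hence the second-order Taylor remainders, both of the concave term $\rho^{\theta}$ and of $A$, must be controlled quantitatively, and it is precisely the restriction $\mu<1/(2\theta)$ built into \eqref{eqn:mu} that renders these remainders negligible after the division. I would carry out this quantitative expansion in the postponed Appendix A.
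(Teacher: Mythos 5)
Your reduction is correct, and it takes a genuinely different route from the paper. The paper's Appendix B renormalizes the unknowns by powers of $A$ (setting $\rho^{\varDelta}_{\dagger}=\hat{\rho}\,A^{2/(\gamma-1)}$, $m^{\varDelta}_{\dagger}=\hat{m}\,A^{(\gamma+1)/(\gamma-1)}$ so that the hypothesis becomes $w(\hat{u})\leq 1+o({\varDelta}x)$) and then runs a weighted Jensen inequality with a correction exponent $\omega$, expanding numerator and denominator to second order in $r=A-\bar{A}_j=O({\varDelta}x)$ so that the first-order terms cancel. Your version instead multiplies the pointwise bound by the nonnegative density and isolates everything into one covariance-versus-convexity-defect inequality. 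That is a cleaner formulation of what the paper's computation achieves, and the algebra leading to it is sound.

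The gap is that this final inequality is the entire analytic content of the lemma, and you neither prove it nor point at the right mechanism for it. The appeal to the choice of $a^{n}_j$ in \eqref{a^n_j} is misplaced here: inside this lemma $A$ enters only through $\bar{A}_j$ and through $\|A-\bar{A}_j\|_{\infty}=O({\varDelta}x)$, and the cancellation of the correlation between $\eta_{\ast}(u^{\varDelta})-a^{n}_j\rho^{\varDelta}$ and $\rho^{\varDelta}$ is performed afterwards, in Lemma \ref{lem:average2}, once $\bar{A}_j$ has been unpacked. Nor is the delicate regime the one where $\rho^{\varDelta}$ is nearly constant (there the covariance $\langle(\rho^{\varDelta}_{\dagger}-\langle\rho^{\varDelta}_{\dagger}\rangle)r\rangle$ is itself negligible); it is the regime where $\rho^{\varDelta}_{\dagger}$ is uniformly small or concentrated. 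In that regime the obvious estimates fail: bounding the covariance by $\|r\|_{\infty}\langle\rho^{\varDelta}_{\dagger}\rangle$ gives only $O({\varDelta}x)\langle\rho^{\varDelta}_{\dagger}\rangle$, not $o({\varDelta}x)\langle\rho^{\varDelta}_{\dagger}\rangle$; and Cauchy--Schwarz plus Young with a uniform convexity constant for $s\mapsto s^{1+\theta}$ leaves an absolute error $O(({\varDelta}x)^{2})$, which after division by $\langle\rho^{\varDelta}_{\dagger}\rangle\geq({\varDelta}x)^{\mu}$ with $\mu>1$ is precisely the $O(({\varDelta}x)^{2-\mu})$ loss you identify but do not resolve. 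The inequality is in fact true, but closing it requires exploiting the degeneracy of the convexity of $s\mapsto s^{1+\theta}$ near $s=0$: splitting the cell into $\{\rho^{\varDelta}_{\dagger}\geq K\langle\rho^{\varDelta}_{\dagger}\rangle\}$, where the Bregman divergence of $s^{1+\theta}$ dominates $s\,\langle\rho^{\varDelta}_{\dagger}\rangle^{\theta}$ directly, and its complement, where the second derivative is bounded below by a multiple of $\langle\rho^{\varDelta}_{\dagger}\rangle^{\theta-1}$; in both cases the surviving error is $O({\varDelta}x)\cdot{\varDelta}x\,\langle\rho^{\varDelta}_{\dagger}\rangle^{1-\theta}$ or $O({\varDelta}x)\,\langle\rho^{\varDelta}_{\dagger}\rangle^{1-\theta}\cdot{\varDelta}x$, and it is the condition $\langle\rho^{\varDelta}_{\dagger}\rangle^{\theta}\geq({\varDelta}x)^{\mu\theta}\gg{\varDelta}x$ coming from $\mu<1/(2\theta)$ in \eqref{eqn:mu} that makes this $o({\varDelta}x)\langle\rho^{\varDelta}_{\dagger}\rangle$. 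Without some such argument the proposal stops exactly where the lemma begins.
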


It follows from \eqref{goal} and this lemma that 
\begin{align}
	\begin{alignedat}{1}
		w(E^{n}_j(u))
		\leq&M_{n}+L_{n-1}+I^{n}_j+\int^{t_{n}}_{t_n}\sum_{y<x_{j-1}}(\sigma[\eta_{\ast}]-[q_{\ast}])dt
		\\
		&+\int^{x_{j-1}}_{-\infty}\left\{\eta_{\ast}\left({u}^{\varDelta}(x,t_{n-})\right)-\eta_{\ast}\left({u}^{\varDelta}_{n,0}(x)\right)\right\}
		dx\\
		&+\frac1{2{\varDelta}x}\int^{x_{j+1}}_{x_{j-1}}
		\int^x_{x_{j-1}}\left\{\eta_{\ast}\left({u}^{\varDelta}(y,t_{n-})\right)
		-\eta_{\ast}\left(E^n_j(u)\right)\right\}dy	dx\\
		&-\frac1{2{\varDelta}x}\int^{x_{j+1}}_{x_{j-1}}\int^x_{x_{j-1}}a^{n}_j\left({\rho}^{\varDelta}(y,t_{n-})-E^n_j(\rho) \right)dydx
		+o({\varDelta}x).
	\end{alignedat}
	\label{lemma 4.1 2}
\end{align}

To complete the proof of Theorem \eqref{average}, we must investigate
\begin{align*}
	\Gamma^{n}_j(y)=&\eta_{\ast}({u}^{\varDelta}(y,t_{n-}))-\eta_{\ast}(E^n_j(u))-a^{n}_j\left({\rho}^{\varDelta}(y,t_{n-})-E^n_j(\rho)\right)
\end{align*}
in \eqref{lemma 4.1 2}, where $a^n_j$ is defined in \eqref{a^n_j}.

We then deduce from \eqref{Taylor} that
\begin{align*}
	\begin{alignedat}{2}
		\Gamma^{n}_j(y)=&
		\dfrac{\partial\eta_{\ast}}{\partial m}(E^n_j(u)){\rho}^{\varDelta}(y,t_{n-})\left(w(y,t_{n-})-w(E^n_j(u))\right)\\&-\dfrac{\partial\eta_{\ast}}{\partial m}(E^n_j(u))\int^1_0(1-\tau)(\theta+1)\left(E^n_j(\rho)+\tau\left\{{\rho}^{\varDelta}(y,t_{n-})-E^n_j(\rho)\right\}\right)^{\theta-1}d\tau\\
		&\times\left({\rho}^{\varDelta}(y,t_{n-})-E^n_j(\rho)\right)^2+R^{n}_j(y)
\end{alignedat}
\end{align*}\begin{align*}
	\begin{alignedat}{2}
		=&
		E^n_j(v){\rho}^{\varDelta}(y,t_{n-})\left(w(y,t_{n-})-w(E^n_j(u)\right)\\&
		-\dfrac{E^n_j(v)}{2}\int^1_0(1-\tau)(\theta+1)\left(E^n_j(\rho)+\tau\left\{{\rho}^{\varDelta}(y,t_{n-})-E^n_j(\rho)\right\}\right)^{\theta-1}d\tau\\
		&\times\left({\rho}^{\varDelta}(y,t_{n-})-E^n_j(\rho)\right)^2+R^{n}_j(y).
	\end{alignedat}
\end{align*}

We thus obtain
\begin{align}
	\begin{alignedat}{2}
		\frac1{2{\varDelta}x}&\int^{x_{j+1}}_{x_{j-1}}\int^x_{x_{j-1}}
		\Gamma^{n}_j(y)dydx\\
		=&\frac1{2{\varDelta}x}\int^{x_{j+1}}_{x_{j-1}}\int^x_{x_{j-1}}E^n_j(v){\rho}^{\varDelta}(y,t_{n-})\left(w(y,t_{n-})-w(E^n_j(u))\right)dydx\\
		&-\frac1{2{\varDelta}x}\int^{x_{j+1}}_{x_{j-1}}\int^x_{x_{j-1}}\dfrac{E^n_j(v)}{2}\int^1_0(1-\tau)(\theta+1)\left(E^n_j(\rho)+\tau\left\{{\rho}^{\varDelta}(y,t_{n-})-E^n_j(\rho)\right\}\right)^{\theta-1}d\tau\\
		&\times\left({\rho}^{\varDelta}(y,t_{n-})-E^n_j(\rho)\right)^2dydx
		+\frac1{2{\varDelta}x}\int^{x_{j+1}}_{x_{j-1}}\int^x_{x_{j-1}}R^{n}_j(y)dydx\\
		=&:B_1+B_2+B_3.
	\end{alignedat}
	\label{lemma 3.4}
\end{align}

If $E^{n}_j(\rho)<({\varDelta}x)^{\mu}$, we find $B_1=o({\varDelta}x)$ and $B_2=o({\varDelta}x)$. 
Therefore, we devote to investigating the case where $E^{n}_j(\rho)\geq({\varDelta}x)^{\mu}$.

If $w(E^n_j(u))\leq M_{n}+L_{n}+I^{n}_j$, \eqref{average} clearly holds.

Otherwise, we consider the following lemma.
\begin{lemma}\label{lem:average2}If 
	\begin{align}
		w(E^n_j(u))> M_{n}+L_{n}+I^{n}_j, 
		\label{Lemma 3.5 1}
	\end{align}the following holds.	
	\begin{align*}
		\frac1{2{\varDelta}x}\int^{x_{j+1}}_{x_{j-1}}\int^x_{x_{j-1}}
		\Gamma^{n}_j(y)dydx\leq&\int^{x_{j+1}}_{x_{j-1}}R^{n}_j(x)dx+o({\varDelta}x).\end{align*}
\end{lemma}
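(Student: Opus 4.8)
The plan is to start from the decomposition $\frac1{2{\varDelta}x}\int^{x_{j+1}}_{x_{j-1}}\int^x_{x_{j-1}}\Gamma^n_j(y)\,dy\,dx=B_1+B_2+B_3$ already recorded in \eqref{lemma 3.4}, and to dispose of $B_3$ at once. Since $\eta_{\ast}$ is convex, $\nabla^2\eta_{\ast}\geq0$, so the integrand $R^n_j(y)$ of \eqref{Taylor} is nonnegative; hence $\int^x_{x_{j-1}}R^n_j(y)\,dy\leq\int^{x_{j+1}}_{x_{j-1}}R^n_j(y)\,dy$ for every $x$ in the cell, and averaging in $x$ gives $B_3\leq\int^{x_{j+1}}_{x_{j-1}}R^n_j(x)\,dx$. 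It therefore remains to prove $B_1+B_2\leq o({\varDelta}x)$.

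Next I would record the exact ``Jensen gap'' identity tying $B_1$ and $B_2$ together. With $K(y)=(\theta+1)\int^1_0(1-\tau)(E^n_j(\rho)+\tau(\rho^{\varDelta}(y,t_{n-})-E^n_j(\rho)))^{\theta-1}\,d\tau\geq0$, a second-order Taylor expansion of $\rho\mapsto\rho^{\theta+1}$ about $E^n_j(\rho)$, integrated over $[x_{j-1},x_{j+1}]$ and using $\int^{x_{j+1}}_{x_{j-1}}(\rho^{\varDelta}-E^n_j(\rho))\,dy=0$, gives $\int^{x_{j+1}}_{x_{j-1}}\rho^{\varDelta}(w-w(E^n_j(u)))\,dy=\int^{x_{j+1}}_{x_{j-1}}K(\rho^{\varDelta}-E^n_j(\rho))^2\,dy$. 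This shows the leading parts of $B_1$ and $B_2$ are of the same order. Both carry the common factor $E^n_j(v)=\partial\eta_{\ast}/\partial m(E^n_j(u))$, and an integration by parts collapses the nested integrals into $B_1+B_2=\frac{E^n_j(v)}{2{\varDelta}x}\int^{x_{j+1}}_{x_{j-1}}(x_{j+1}-x)G(x)\,dx$, where $G(x)=\rho^{\varDelta}(w-w(E^n_j(u)))-\frac12 K(\rho^{\varDelta}-E^n_j(\rho))^2$.

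The heart of the matter is to exploit the hypothesis \eqref{Lemma 3.5 1}. Applying the already established upper bound $\eqref{goal}_2$ for $w^{\varDelta}(\cdot,t_{n-})$ and subtracting $w(E^n_j(u))>M_n+L_n+I^n_j$, the monotonicity $L_{n-1}\leq L_n$, the entropy dissipation $\sigma[\eta_{\ast}]-[q_{\ast}]\leq0$, and $J\geq0$ combine to give the one-sided bound $w^{\varDelta}(y,t_{n-})-w(E^n_j(u))\leq o({\varDelta}x)$ throughout the cell (more precisely, bounded above by quantities whose $(x_{j+1}-x)$-weighted cell integral is $O(({\varDelta}x)^2)$). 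Feeding this into $G$ and discarding the manifestly nonpositive quadratic term yields $\frac1{2{\varDelta}x}\int^{x_{j+1}}_{x_{j-1}}(x_{j+1}-x)G(x)\,dx\leq o({\varDelta}x)$; on the branch $E^n_j(v)\geq0$, which by Remark \ref{rem:Riemann-invariant} is exactly the branch where the upper $w$-bound is the active constraint, this already gives $B_1\leq o({\varDelta}x)$ while $B_2\leq0$, hence $B_1+B_2\leq o({\varDelta}x)$.

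The main obstacle I expect is the sign of $E^n_j(v)$: a large $w(E^n_j(u))$ does not by itself force $v\geq0$ (it may reflect a large density with $v<0$), and on that branch $B_2$ acquires the unfavourable sign while the one-sided control of $w^{\varDelta}-w(E^n_j(u))$ points the wrong way. Resolving this should require pairing the estimate with the companion lower bound $\eqref{goal}_1$ for $z^{\varDelta}$ and using the Riemann-invariant relations of Remark \ref{rem:Riemann-invariant} to transfer the control from $w$ to $z$ when $v<0$, so that the quadratic remainder is reabsorbed into $\int^{x_{j+1}}_{x_{j-1}}R^n_j(x)\,dx$ rather than into $o({\varDelta}x)$. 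A secondary point to watch is the bookkeeping of error scales: the $1$-rarefaction is discretised at scale $({\varDelta}x)^{\alpha}$ and cells with $E^n_j(\rho)<({\varDelta}x)^{\mu}$ are split off by \eqref{eqn:mu}, so one must verify that every ``$o({\varDelta}x)$'' above is uniform in $j,n$ and depends only on $M_0,E_0$ and $T$.
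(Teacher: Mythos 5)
Your handling of $B_3$ (via $R^n_j\geq0$) and of the branch $E^n_j(v)\geq0$ (where $B_2\leq0$ by inspection and $B_1\leq o({\varDelta}x)$ because $E^n_j(v)\rho^{\varDelta}\geq0$ multiplies $w(y,t_{n-})-w(E^n_j(u))\leq o({\varDelta}x)$, the latter coming from $\eqref{goal}_2$ together with the hypothesis \eqref{Lemma 3.5 1}) agrees with the paper. But the proposal stops exactly at the point you yourself flag as the main obstacle --- the branch $E^n_j(v)<0$ --- and the resolution you sketch there (``transfer the control from $w$ to $z$ \ldots so that the quadratic remainder is reabsorbed into $\int R^n_j$'') is not the argument that closes the proof. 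The paper's step is much more direct: by Theorem \ref{thm:bound} (equivalently $\eqref{average}_1$) one has $z(E^n_j(u))\geq -M_n-L_n+I^n_j-O({\varDelta}x)$; adding this to \eqref{Lemma 3.5 1} and halving gives $E^n_j(v)=\tfrac12\bigl(w(E^n_j(u))+z(E^n_j(u))\bigr)\geq I^n_j-O({\varDelta}x)$. Since $J\geq0$ by \eqref{eta}, $I^n_j\geq0$, so on the branch $E^n_j(v)<0$ one gets $|E^n_j(v)|\leq O({\varDelta}x)$ (this is \eqref{eqn:E(v)}), and then \emph{both} $B_1$ and $B_2$ are an $O({\varDelta}x)$ prefactor times a bounded cell integral of size $O({\varDelta}x)$, hence $o({\varDelta}x)$, with no sign considerations needed at all. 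The $z$-lower bound is thus used to pin the common factor $E^n_j(v)$ itself down to within $O({\varDelta}x)$ of a nonnegative quantity, not to rewrite the $w$-estimate in terms of $z$; without this observation your argument does not cover the case $v<0$ and the lemma is not proved.

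A secondary remark: the ``Jensen gap'' identity and the integration by parts collapsing $B_1+B_2$ into a single weighted integral are unnecessary machinery here --- the decomposition \eqref{lemma 3.4} is already in the form the paper exploits, term by term --- and your identification of $E^n_j(v)$ with $\partial\eta_{\ast}/\partial m(E^n_j(u))$ should be checked against the actual definition \eqref{a^n_j}, where $E^n_j(v)=E^n_j(m)/E^n_j(\rho)$ is the velocity of the averaged state. These are cosmetic issues; the substantive gap is the unresolved negative-velocity branch described above.
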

\begin{proof}
	
	From Theorem \ref{thm:bound}, we have $z(E^n_j(u))\geq -M_{n}-L_{n}+I^{n}_j-O({\varDelta}x)$. From 
	\eqref{Lemma 3.5 1}, we find $E^n_j(v)\geq I^{n}_j-O\left({\varDelta}x\right)$ (recall the definition of $E^n_j(v)$ in 
\eqref{a^n_j}
). If $E^n_j(v)<0$, since $I^{n}_j-O\left({\varDelta}x\right)\leq E^n_j(v)\leq 0$, we have 
	\begin{align}
		-E^n_j(v)\leq O\left({\varDelta}x\right).
\label{eqn:E(v)}	
\end{align}

	We first treat with $B_1$ in \eqref{lemma 3.4}. If $E^n_j(v)\geq0$, we deduce from Theorem \ref{thm:bound} and \eqref{Lemma 3.5 1} that
	\begin{align*}
		\begin{alignedat}{2}
			B_1
			\leq&\int^{x_{j+1}}_{x_{j-1}}E^n_j(v){\rho}^{\varDelta}(x,t_{n-})\left(w(x,t_{n-})-w^{n}_j\right)dx
			=o({\varDelta}x).
		\end{alignedat}
	\end{align*}If $E^n_j(v)<0$, from \eqref{eqn:E(v)}, we have 
$B_1=o({\varDelta}x)$.

	We next consider $B_2$. If $E^n_j(v)\geq0$, we find that $B_2\leq0$.
If $E^n_j(v)<0$, from \eqref{eqn:E(v)}, we have 
$B_2=o({\varDelta}x)$. \end{proof}From Lemma \ref{lem:average2}, we can complete the proof of  \eqref{average}. 

\vspace*{0ex}

\subsection{Proof of Theorem \ref{thm:bound}}
We next prove Theorem \ref{thm:bound}.

{\bf Estimates of ${w}^{\varDelta}(x,t)$ along $R^{\varDelta}_1$ in Case 1}
In this step, we estimate ${w}^{\varDelta}(x,t)$ along $R^{\varDelta}_1$ in Case 1
of Section 2. We recall that ${u}^{\varDelta}$ along $R^{\varDelta}_1$ consists of ${u}^{\varDelta}_{k}\quad(k=1,2,3,\ldots,p-1)$.
In this case, ${w}^{\varDelta}(x,t)$ 
has the following properties, which is  proved in \cite[Appendix A]{T1}:
\begin{align}
	{w}^{\varDelta}_{k+1}
	(x^{\varDelta}_{k+1}(t_{n.5}),t_{n.5})=&w_{k+1}
	={w}^{\varDelta}_k(x^{\varDelta}_{k+1}(t_{n.5}),t_{n.5})+{\it O}(({\varDelta}x)^{3\alpha-(\gamma-1)\beta})\nonumber
	\\&\hspace*{22ex}(k=1,\ldots,p-2),
	\label{w_i-w_{i+1}}
\end{align}
where $t_{n.5}$ is defined in \eqref{terminology}.

We first consider $\tilde{w}^{\varDelta}_1$. We recall that
\begin{align*}
	\begin{alignedat}{2}
		\tilde{w}^{\varDelta}_1=w_{\rm L}- \int^{x_{j-1}}_{-\infty}
		J(u^{\varDelta}_{n,0}(x))dx.
	\end{alignedat}
\end{align*}
From \eqref{remark2.2}, we have $\tilde{w}^{\varDelta}_1\leq M_n+L_n$.

Since 
\begin{align}
	\check{u}^{\varDelta}_1(x,t)={u}^{\varDelta}_1(x,t)+
	O(({\varDelta}x)^2),
	\label{iteration}
\end{align}we have 
\begin{align*}
	\begin{alignedat}{2}
		&{w}^{\varDelta}_1(x,t)&=&\tilde{w}^{\varDelta}_1
		+ \int^{x_{j-1}}_{-\infty}
		J(u^{\varDelta}_{n,0}(x))dx+V(u_{\rm L})(t-t_n)+\int^x_{x^{\varDelta}_1}
		J(\check{u}^{\varDelta}_1(y,t))dy\\&&&
		+g_2(x,t;\check{u}^{\varDelta})(t-t_{n})
		\\
		&&\leq&M_n+L_n+ \int^{x_{j-1}}_{-\infty}
		J(u^{\varDelta}_{n,0}(x))dx+V(u_{\rm L})(t-t_n)+\int^x_{x^{\varDelta}_1}
		J({u}^{\varDelta}_1(y,t))dy\\&&&
		+g_2(x,t;{u}^{\varDelta})(t-t_{n})
		+o({\varDelta}x).
	\end{alignedat}
\end{align*}
On the other hand, from the construction of our approximate solutions, 
we observe that ${w}^{\varDelta}_1(x,t)={w}^{\varDelta}_1(x,t_{n-})+O({\varDelta}x)\quad (x_{j-1}\leq x<x_{j+1},\;t_n\leq t<t_{n+1})$.
Separating three cases, we prove $\eqref{goal}_2$. 
\begin{enumerate}
\item If ${w}^{\varDelta}_1(x,t_{n-})<M_n+L_n+I^n_j-\sqrt{{\varDelta}x}$, we obtain  $\eqref{goal}_2$. 
\item If ${w}^{\varDelta}_1(x,t_{n-})\geq M_n+L_n+I^n_j-\sqrt{{\varDelta}x}$ and 
$M_n+L_n\geq \dfrac{\left(\bar{\rho}\right)^{\theta}}{\theta}+\varepsilon$, 
we observe that ${w}^{\varDelta}_1(x,t)
\geq{w}^{\varDelta}_1(x,t_{n-})-O(\sqrt{{\varDelta}x})\geq M_n+L_n-O(\sqrt{{\varDelta}x})\geq\dfrac{\left(\bar{\rho}\right)^{\theta}}{\theta}+\varepsilon-O(\sqrt{{\varDelta}x})\geq\dfrac{\left(\bar{\rho}\right)^{\theta}}{\theta}+\varepsilon/2$, by choosing ${\varDelta}x$ small enough.
From \eqref{delta}, 
we obtain $g_2(x,t;{u}^{\varDelta}_1)<-2\delta$. 
From \eqref{mass-conservation}, we obtain $\eqref{goal}_2$.
\item If ${w}^{\varDelta}_1(x,t_{n-})\geq M_n+L_n+I^n_j-\sqrt{{\varDelta}x}$ and 
$M_n+L_n< \dfrac{\left(\bar{\rho}\right)^{\theta}}{\theta}+\varepsilon$, from \eqref{M_n},
we find that $\dfrac{\left(\bar{\rho}\right)^{\theta}}{\theta}+\varepsilon-\delta {\varDelta}t\leq M_n+L_n$.
Therefore, we have ${w}^{\varDelta}_1(x,t)
\geq{w}^{\varDelta}_1(x,t_{n-})-O(\sqrt{{\varDelta}x})\geq M_n+L_n-O(\sqrt{{\varDelta}x})\geq\dfrac{\left(\bar{\rho}\right)^{\theta}}{\theta}+\varepsilon-O(\sqrt{{\varDelta}x})\geq\dfrac{\left(\bar{\rho}\right)^{\theta}}{\theta}+\varepsilon/2$, by choosing ${\varDelta}x$ small enough.
From \eqref{delta}, 
we obtain $g_2(x,t;{u}^{\varDelta}_1)<-2\delta<0$. 
Recalling \eqref{M_n}, from \eqref{mass-conservation}, we obtain $\eqref{goal}_2$.
\end{enumerate}

Next, we assume that \begin{align}
	\begin{alignedat}{2}
		&{w}^{\varDelta}_k(x,t)
		&\leq&M_n+L_n+\int^{x_{j-1}}_{-\infty}
		J(u^{\varDelta}_{n,0}(x))dx+V(u_{\rm L})(t-t_n)\\&&&+\int^x_{x_{j-1}}
		J({u}^{\varDelta}(y,t))dy
		+\int^{t}_{t_{n.5}}\hspace*{0ex}\sum_{x_{j-1}\leq y< x}\hspace*{-1ex}(\sigma[\eta_{\ast}]-[q_{\ast}])dt\\&&&+(k-1)\cdot{\it O}(({\varDelta}x)^{3\alpha-(\gamma-1)\beta})
	+o({\varDelta}x)
		\label{assumption}
	\end{alignedat}
\end{align}
for $(x,t)\in [x_{j-1},x^{\varDelta}_{k+1}(t))\times[t_n,t_{n+1})$.

We recall that\begin{align*}
	\begin{alignedat}{2}
		&\tilde{w}^{\varDelta}_{k+1}=&&w_{k+1}-\int^{x_{j-1}}_{-\infty}
		J(u^{\varDelta}_{n,0}(x))dx-V(u_{\rm L})\frac{{\varDelta}t}{2}-\sum^{k}_{l=1}\int^{x^{\varDelta}_{l+1}(t_{n.5})}_{x^{\varDelta}_l(t_{n.5})}
		J(u^{\varDelta}_l(x,t_{n.5}))dx
	\end{alignedat}
\end{align*}
and ${u}^{\varDelta}(x,t)$ consists $u^{\varDelta}_l(x,t)$ in 
$x^{\varDelta}_{l}(t)\leq x<x^{\varDelta}_{l+1}(t),\;t_n\leq t<t_{n+1}\quad(l=1,2,3,\ldots,k+1)$.

From \eqref{w_i-w_{i+1}} and \eqref{assumption}, 
we have 
\begin{align*}
	\tilde{w}^{\varDelta}_{k+1}\leq& M_n+L_n
+k\cdot{\it O}(({\varDelta}x)^{3\alpha-(\gamma-1)\beta})+o({\varDelta}x).	
\end{align*}

From a similar argument to ${w}^{\varDelta}_1$, we have
\begin{align*}&\begin{alignedat}{2}
		&{w}^{\varDelta}_{k+1}(x,t)&=&\tilde{w}^{\varDelta}_{k+1}
		+ \int^{x_{j-1}}_{-\infty}
		J(u^{\varDelta}_{n,0}(x))dx+V(u_{\rm L})(t-t_n)
		+\sum^{k}_{l=1}
		\int^{x^{\varDelta}_{l+1}(t)}_{x^{\varDelta}_l(t)}
		J(u^{\varDelta}_l(x,t))dx\\&&&+\int^x_{x^{\varDelta}_{k+1}(t)}J(
		\check{u}^{\varDelta}_{k+1}(y,t))dy
		+g_2(x,t;\check{u}^{\varDelta}_{k+1})(t-t_{n.5})
\\&&&
			+\int^{t}_{t_{n.5}}\hspace*{0ex}\sum_{\substack{x_{j-1}\leq y \leq x}}\hspace*{-1ex}(\sigma[\eta_{\ast}]-[q_{\ast}])ds\\
		&&\leq&M_n+L_n+\int^{x_{j-1}}_{-\infty}
		J(u^{\varDelta}_{n,0}(x))dx+V(u_{\rm L})(t-t_n)+\int^x_{x_{j-1}}J(
		{u}^{\varDelta}(y,t))dy\\&&&+g_2(x,t;\check{u}^{\varDelta}_{k+1})(t-t_{n.5})+\int^{t}_{t_{n.5}}\hspace*{0ex}\sum_{\substack{x_{j-1}\leq y \leq x}}\hspace*{-1ex}(\sigma[\eta_{\ast}]-[q_{\ast}])ds\\
&&&
		+k\cdot{\it O}(({\varDelta}x)^{3\alpha-(\gamma-1)\beta})+o({\varDelta}x)\qquad
		(k=1,2,3,\ldots,p-1).
	\end{alignedat}
\end{align*}
From \eqref{order-p} and \eqref{mass-conservation}, since $\left\{3\alpha-(\gamma-1)\beta\right\}p>1$, we conclude $\eqref{goal}_2$.

\section{Proof of Theorem \ref{thm:main}}

Our approximate solutions satisfy the following propositions holds (these proofs are similar to \cite{T1}--\cite{T3}.).
\begin{proposition}\label{pro:compact}
	The measure sequence
	\begin{align*}
		\eta_{\ast}(u^{\varDelta})_t+q(u^{\varDelta})_x
	\end{align*}
	lies in a compact subset of $H_{\rm loc}^{-1}(\Omega)$ for all weak entropy 
	pair $(\eta_{\ast},q)$, where $\Omega\subset[0,1]\times[0,1]$ is any bounded
	and open set. 
\end{proposition}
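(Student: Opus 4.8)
The plan is to establish this compactness by the standard compensated-compactness framework of Murat and Tartar, as adapted to difference schemes by Ding--Chen--Luo and in the author's earlier works \cite{T1}--\cite{T3}. The key tool is the interpolation lemma: if a sequence of distributions is bounded in $W^{-1,p}_{\mathrm{loc}}(\Omega)$ for some $p>2$ and can be written as the sum of a sequence lying in a compact subset of $H^{-1}_{\mathrm{loc}}(\Omega)$ and a sequence bounded in the space of Radon measures $\mathcal{M}_{\mathrm{loc}}(\Omega)$, then it itself lies in a compact subset of $H^{-1}_{\mathrm{loc}}(\Omega)$. Write $\mu^{\varDelta}:=\eta(u^{\varDelta})_t+q(u^{\varDelta})_x$ for a fixed weak entropy pair $(\eta,q)$. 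By Theorem \ref{thm:bound} together with Remark \ref{rem:Riemann-invariant}, the Riemann invariants $z^{\varDelta},w^{\varDelta}$ are uniformly bounded, so $u^{\varDelta}$ is bounded in $L^{\infty}$ uniformly in ${\varDelta}x$; consequently $\eta(u^{\varDelta}),q(u^{\varDelta})$ are uniformly bounded and $\mu^{\varDelta}$ is automatically bounded in $W^{-1,\infty}_{\mathrm{loc}}\subset W^{-1,p}_{\mathrm{loc}}$ for every finite $p$ on the bounded set $\Omega$. Thus the $W^{-1,p}$ hypothesis with $p>2$ is free, and the entire task reduces to producing the required decomposition of $\mu^{\varDelta}$.

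First I would exploit the fine structure of the approximate solution recorded in Remark \ref{rem:approximate}. On each smooth sub-cell the relation $(u^{\varDelta})_t+f(u^{\varDelta})_x-g(x,u^{\varDelta})=o(1)$ gives, by the entropy--flux relation and the chain rule, $\mu^{\varDelta}=\nabla\eta(u^{\varDelta})\cdot g(x,u^{\varDelta})+o(1)$. Since $g$ is a fixed continuous function of $u$ and $\nabla\eta$ is bounded on the (bounded) range of $u^{\varDelta}$, the first term is bounded in $L^{1}_{\mathrm{loc}}$, hence in $\mathcal{M}_{\mathrm{loc}}$, while the remainder tends to $0$ in $L^{2}_{\mathrm{loc}}$ and so lies in a compact subset of $H^{-1}_{\mathrm{loc}}$. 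Across the discontinuities of $u^{\varDelta}$, $\mu^{\varDelta}$ contributes the singular part $\sum_{\mathrm{shocks}}(\sigma[\eta]-[q])\,\delta_{\mathcal{D}}$, and at the discrete times $t_{n}$ the averaging (projection) onto the cell means $E^{n}_{j}(u)$ contributes the energy-gap terms already isolated in \eqref{functional discontinuity}. These two contributions together form the candidate measure part, and it remains only to bound their total mass uniformly on compact sets.

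The crux of the argument is this uniform bound on the measure part, which I would reduce to the functional estimate of Lemma \ref{lem:error}. For the convex mechanical energy $\eta_{\ast}$ the Lax shock condition forces $\sigma[\eta_{\ast}]-[q_{\ast}]\le 0$, and for a general weak entropy $\eta$ the jump $|\sigma[\eta]-[q]|$ is dominated, on the uniformly bounded range of states, by the energy dissipation $|\sigma[\eta_{\ast}]-[q_{\ast}]|$ up to a fixed constant; the projection gaps are controlled in the same way by the energy gaps $\eta_{\ast}(u^{\varDelta}(\cdot,t_{k-0}))-\eta_{\ast}(E^{k}(\cdot;u))$. Hence the total mass of the measure part over $[0,T]$ is bounded by a constant multiple of the quantity appearing in \eqref{functional discontinuity}, which by Lemma \ref{lem:error}, and in particular the bound $L_{n}\le C$ in \eqref{lemm 3.5.5}, is uniformly bounded independently of ${\varDelta}x$. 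This furnishes the $\mathcal{M}_{\mathrm{loc}}$ bound. Combining the three ingredients and invoking the interpolation lemma yields the precompactness of $\{\mu^{\varDelta}\}$ in $H^{-1}_{\mathrm{loc}}(\Omega)$.

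The step I expect to be the main obstacle is the uniform measure bound, specifically the comparison of the production of an arbitrary weak entropy with the mechanical-energy dissipation and the absorption of the scheme-dependent error terms (the source $g$, the correction $V(u)$, and the double iteration in \eqref{appro1-2}--\eqref{appro1}) into the $o({\varDelta}x)$ remainders; once the entropy production is tied to the functional $L_{n}$, the bound $L_{n}\le C$ closes the estimate and the remainder of the argument is routine and parallels \cite{T1}--\cite{T3}.
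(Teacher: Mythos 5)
Your proposal is correct and follows essentially the same route the paper takes: the paper gives no independent argument for Proposition \ref{pro:compact} but defers to \cite{T1}--\cite{T3}, which use exactly the Murat--Tartar interpolation lemma with the decomposition you describe (an $L^{1}$/measure part from the entropy production at shocks and at the projection times $t_{n}$, controlled by the mechanical-energy dissipation and the functional bound $L_{n}\le C$ of Lemma \ref{lem:error}, plus a compact $H^{-1}_{\mathrm{loc}}$ remainder, plus the free $W^{-1,p}_{\mathrm{loc}}$ bound from the $L^{\infty}$ estimate of Theorem \ref{thm:bound}). Your identification of the main technical burden --- dominating the production of an arbitrary weak entropy by the mechanical-energy dissipation and absorbing the scheme-dependent corrections into the $o({\varDelta}x)$ terms --- matches where the cited works spend their effort.
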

\begin{proposition} 
	Assume that the approximate solutions $u^{\varDelta}$ are bounded and satisfy Proposition \ref{pro:compact}. Then there is a convergent subsequence $u^{\varDelta_n}(x,t)$
	in the approximate solutions $u^{\varDelta}(x,t)$ such that
	\begin{equation*}      
		u^{\varDelta_n}(x,t)\rightarrow u(x,t)
		\hspace{2ex}
		\text{\rm a.e.,\quad as\;\;}n\rightarrow \infty.
	\end{equation*} 
	The function $u(x,t)$ is a global entropy solution
	of the Cauchy problem \eqref{IP}. 
\end{proposition}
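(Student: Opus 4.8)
The plan is to run the compensated compactness program of DiPerna \cite{D2}, as completed for all $\gamma\in(1,5/3]$ by Ding--Chen--Luo \cite{DC1} and Chen \cite{C1,C2}; the two hypotheses of the statement---uniform boundedness of $u^{\varDelta}$ and the $H^{-1}_{\rm loc}$ compactness of Proposition \ref{pro:compact}---are exactly the input this program requires. First I would make the boundedness quantitative. The estimate \eqref{goal} of Theorem \ref{thm:bound} bounds $z(u^{\varDelta})$ from below by $-M_{n+1}-E_0-L_n+\int^x_{-\infty}J(u^{\varDelta})\,dy-o({\varDelta}x)$ and $w(u^{\varDelta})$ from above by $M_{n+1}+L_n+\int^x_{-\infty}J(u^{\varDelta})\,dy$ plus a bounded entropy-production term; since $M_n\le M_0$ by \eqref{M_n}, $L_n\le C$ by \eqref{lemm 3.5.5}, and $0\le\int^x_{-\infty}J(u^{\varDelta})\,dy\le E_0+o({\varDelta}x)$ by Lemma \ref{lem:error}, these one-sided bounds on the Riemann invariants are uniform in $\varDelta$ and in $n$ on every strip $0\le t\le T$. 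Working in the physical region $\rho\ge0$ (that is, $w\ge z$), Remark \ref{rem:Riemann-invariant} converts a lower bound for $z$ and an upper bound for $w$ into genuine two-sided bounds for $\rho^{\varDelta}$ and $|v^{\varDelta}|$; hence $\{u^{\varDelta}\}$ is bounded in $L^{\infty}({\bf R}\times[0,T])$ uniformly in $\varDelta$.

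With the uniform bound in hand, I would pass to a subsequence generating a Young measure $\nu_{x,t}$, supported in the fixed compact state region, that represents the weak-$\ast$ limits of $h(u^{\varDelta})$. Proposition \ref{pro:compact} says that for every weak entropy pair $(\eta,q)$ the sequence $\eta(u^{\varDelta})_t+q(u^{\varDelta})_x$ is precompact in $H^{-1}_{\rm loc}$, so the div--curl lemma of Murat--Tartar yields the commutation relation
\begin{equation*}
\big\langle\nu,\,\eta_1 q_2-\eta_2 q_1\big\rangle=\big\langle\nu,\eta_1\big\rangle\big\langle\nu,q_2\big\rangle-\big\langle\nu,\eta_2\big\rangle\big\langle\nu,q_1\big\rangle
\end{equation*}
for all weak entropy pairs $(\eta_i,q_i)$. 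The crux is then to reduce $\nu_{x,t}$ to a Dirac mass for a.e.\ $(x,t)$. I would insert into this relation the weak entropies generated by the entropy kernel $\chi(\rho,v;s)=\big(\rho^{2\theta}-(s-v)^2\big)_+^{\lambda}$ with $\lambda=\tfrac{3-\gamma}{2(\gamma-1)}$, together with the corresponding entropy-flux kernel, and exploit their cancellation structure exactly as in \cite{DC1,C1,C2} to force the support of $\nu_{x,t}$ to collapse to a single point; the vacuum set $\{\rho=0\}$ causes no difficulty since there $u=(0,0)$ is a well-defined state even though $v$ is undetermined. The collapse of $\nu$ is equivalent to $u^{\varDelta_n}(x,t)\to u(x,t)$ a.e.

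Finally I would pass to the limit by a Lax--Wendroff argument. By Remark \ref{rem:approximate}, in each smooth subregion of a cell $u^{\varDelta}$ satisfies the conservation law up to $o(1)$, while Remark \ref{rem:middle-time} gives the Rankine--Hugoniot relations at the midtimes; multiplying by $\phi\in C^1_0({\bf R}\times{\bf R}_+)$, integrating by parts cell by cell and summing, the interior $o(1)$ residuals, the jump contributions, and the $o({\varDelta}x)$ cut-off corrections of Remark \ref{rem:E} all vanish, so that the a.e.\ convergence and the uniform $L^{\infty}$ bound together with dominated convergence yield $\int^{\infty}_{-\infty}\int^{\infty}_0 u\phi_t+f(u)\phi_x\,dx\,dt+\int^{\infty}_{-\infty}u_0\phi(x,0)\,dx=0$. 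The entropy inequality follows in the same way from the accumulated entropy production $\int(\sigma[\eta_{\ast}]-[q_{\ast}])\,dt$, which Lemma \ref{lem:error} and the functional \eqref{functional discontinuity} keep of the admissible sign and uniformly bounded, giving the required inequality for every nonnegative $\psi$ and every convex weak entropy pair. The main obstacle is the Young-measure reduction of the second paragraph; the boundedness and the final passage to the limit are, by contrast, routine once the estimates of Sections \ref{sec:construction-approximate-solutions}--\ref{sec:bound} are granted.
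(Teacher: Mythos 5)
Your proposal is correct and follows essentially the same route as the paper, which does not prove this proposition in-text but defers to the standard compensated compactness framework of DiPerna and Ding--Chen--Luo/Chen as implemented in \cite{T1}--\cite{T3}: uniform $L^{\infty}$ bounds from Theorem \ref{thm:bound}, Young measures and the div--curl commutation relation from Proposition \ref{pro:compact}, reduction to a Dirac mass via the weak entropy kernel, and a Lax--Wendroff consistency argument using Remarks \ref{rem:middle-time}--\ref{rem:approximate} and Lemma \ref{lem:error}. Nothing in your outline diverges from, or falls short of, what those references carry out.
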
 
Moreover, from Theorem \ref{thm:bound}, the above solution satisfies \eqref{maintheorem2}. Therefore, we can 
prove Theorem \ref{thm:main}.

\appendix
\section{Proof of \eqref{estimate3} and \eqref{estimate4}}\label{app:formal}
\subsection{Proof of \eqref{estimate3}}
First, when $0\leq\rho\leq\bar{\rho}$, we will prove 
\begin{align*}
	&\dfrac{5\gamma-3}{\gamma(\gamma-1)^2}\rho^{\gamma+\theta}
	-\dfrac{2(3\gamma-1)}{\gamma(\gamma-1)^2}\left(\bar{\rho}\right)^{\theta}
	\rho^{\gamma}+
	\dfrac{3-\gamma}{(\gamma-1)^2}\left(\bar{\rho}\right)^{\gamma-1}
	\rho^{\theta+1}
	-\dfrac{3-\gamma}{\gamma(\gamma-1)}\left(\bar{\rho}\right)^{\gamma}
	\rho^{\theta}\vspace*{1ex}\\&+
	\dfrac{2}{\gamma(\gamma-1)}
	\left(\bar{\rho}\right)^{\gamma+\theta}
	\geq0.
\end{align*}
To this, setting $t=\rho/\bar{\rho}$, we consider
\begin{align*}
	f(t)=&(5\gamma-3)t^{3\theta+1}-2(3\gamma-1)t^{2\theta+1}+\gamma(3-\gamma)t^{\theta+1}
	-(3-\gamma)(\gamma-1)t^{\theta}\\
	&+2(\gamma-1),\quad 0\leq t\leq1.
\end{align*}
Separating 3 steps, we will deduce that
$f(t)\geq0,\quad 0\leq t\leq1$.

{\bf Step 1}

First, we consider the neighborhood of $t=0$.
We set $X=t^{\theta}$. Solving two inequalities 
\begin{align*}
	&(5\gamma-3)t^{3\theta+1}-2(3\gamma-1)t^{2\theta+1}+\gamma(3-\gamma)t^{\theta+1}
	\\&
	=t^{\theta+1}\left\{(5\gamma-3)X^2-2(3\gamma-1)X+\gamma(3-\gamma)\right\}
	\geq0
\end{align*}
and 
\begin{align*}
	-(3-\gamma)(\gamma-1)t^{\theta}+2\gamma(\gamma-1)
	=	-(3-\gamma)(\gamma-1)X+2(\gamma-1)\geq0,
\end{align*}
we have $0\leq X\leq \xi$, where $\xi$ is the smaller solution 
of $(5\gamma-3)X^2-2(3\gamma-1)X+\gamma(3-\gamma)=0$. We 
notice that $f(t)\geq0$ in the interval $0\leq X\leq \xi$.

{\bf Step 2}

Next, we consider the neighborhood of $t=1$. 
We find that $f(1)=f'(1)=0$.
On the other hand, from $0\leq t\leq1,\;\gamma>1$, 
we have
\begin{align*}
	4f''(t)=&3(5\gamma-3)(3\gamma-1)(\gamma-1)t^{3\theta-1}-8
	\gamma(\gamma-1)(3\gamma-1)t^{2\theta-1}\\&
	+\gamma(\gamma-1)(\gamma+1)(3-\gamma)t^{\theta-1}+
	(\gamma-1)^2(3-\gamma)^2t^{\theta-2}\\
	\geq&3(5\gamma-3)(3\gamma-1)(\gamma-1)t^{3\theta-1}-8
	\gamma(\gamma-1)(3\gamma-1)t^{2\theta-1}\\&
	+(5\gamma-3)(\gamma-1)(3-\gamma)t^{\theta-1}\\
	\geq&3(5\gamma-3)(3\gamma-1)(\gamma-1)t^{3\theta-1}-8
	\gamma(\gamma-1)(3\gamma-1)t^{2\theta-1}\\&
	+(3\gamma-1)(\gamma-1)(3-\gamma)t^{\theta-1}\\
	=&(3\gamma-1)(\gamma-1)t^{\theta-1}
	\left\{3(5\gamma-3)X^2-8
	\gamma X
	+3-\gamma
	\right\}.
\end{align*}
We thus find that $f(t)\geq0$ in the interval $\eta\leq X\leq1$, 
where $\eta$ is the larger solution of 
$3(5\gamma-3)X^2-8
\gamma X
+3-\gamma=0$.

{\bf Step 3}

Since $\xi<\eta$, from Step 1,2, it suffices to prove $f(t)\geq0$ in 
the interval 
$\xi\leq X\leq \eta$. Observing that
$(5\gamma-3)X^2-2(3\gamma-1)X+\gamma(3-\gamma)\leq0$ in this interval, we have
\begin{align*}
	f(t)=&t^{\theta+1}\left\{(5\gamma-3)X^2-2(3\gamma-1)X+\gamma(3-\gamma)\right\}	-(3-\gamma)(\gamma-1)X+2(\gamma-1)\\
	\geq&X\left\{(5\gamma-3)X^2-2(3\gamma-1)X+\gamma(3-\gamma)\right\}	-(3-\gamma)(\gamma-1)X+2(\gamma-1)\\
	=&(5\gamma-3)X^3-2(3\gamma-1)X^2+(3-\gamma)X+2(\gamma-1)=:g(X).
\end{align*}
Let $\alpha,\;\beta\;(\alpha<\beta)$ be tow solutions of 
$g'(X)=0$. Then, we find that 
$0<\alpha<\xi<\eta<\beta<1$. Moreover, we deduce that 
$g(\eta)>0$. Therefore, we can complete the proof.

\subsection{Proof of \eqref{estimate4}}
Our goal in this appendix is to prove 
\begin{align*}
	\dfrac{\gamma+1}{2\gamma^2(\gamma-1)}\rho^{\gamma+\theta}
	-\dfrac{1}{\gamma-1}\left(\bar{\rho}\right)^{\gamma-1}
	\rho^{\theta+1}+
	\dfrac{\gamma+1}{\gamma^2}\left(\bar{\rho}\right)^{\gamma}
	\rho^{\theta}
	-\dfrac{1}{2\gamma^2}\left(\bar{\rho}\right)^{2\gamma}\dfrac{1}{\rho^{\theta+1}}
	\geq0,
\end{align*}where $\rho\geq\bar{\rho}$. 
To do this, setting $t=\rho/\bar{\rho}$, we prove 
\begin{align*}
	g(t)=&\dfrac{\gamma+1}{2\gamma^2(\gamma-1)}t^{2\gamma}-\dfrac{1}{\gamma-1}t^{\gamma+1}+\dfrac{\gamma+1}{\gamma^2}t^{\gamma}-\dfrac{1}{2\gamma^2}
	\geq0,\quad t\geq1.
\end{align*}
First, we observe that $g(1)=g'(1)=g''(1)=0$. In addition, we find that 
$g'''(t)\geq0,\quad t\geq1$. We thus conclude that $g(t)\geq0$.

\section{Proof of Lemma \ref{lem:average1}}
\begin{proof}
	
	Due to space limitations, we denote $t_{n-}$ by $T$ in this section.

	Set
	\begin{align*}
		\rho^{\varDelta}_{\dagger}(x,T)&:=\hat{\rho}(x,T)\left\{A(x,T)\right\}^{\frac{2}{\gamma-1}},\\
		m^{\varDelta}_{\dagger}(x,T)&:=\hat{m}(x,T)\left\{A(x,T)\right\}^{\frac{\gamma+1}{\gamma-1}},\\
		E_j^{n+1}(\rho^{\varDelta}_{\dagger})&:=\frac1{2{\varDelta}x}\int_{x_{j-1}}
		^{x_{j+1}}\hat{\rho}(x,T)\left\{A(x,T)\right\}^{\frac{2}{\gamma-1}}dx,
		\\
		E_j^{n+1}(m^{\varDelta}_{\dagger})&:=\frac1{2{\varDelta}x}\int_{x_{j-1}}
		^{x_{j+1}}\hat{m}(x,T)\left\{A(x,T)\right\}^{\frac{\gamma+1}{\gamma-1}}dx.
	\end{align*}
	Then, we find that  
	\begin{align}
		{w}(\hat{u}(x,T))\leq 1+{\it o}({\varDelta}x).
		\label{appendix2}
	\end{align}

	Let us  prove 
	\begin{align*}
		w(E_j^{n+1}(\rho^{\varDelta}_{\dagger}),E_j^{n+1}(m^{\varDelta}_{\dagger}))\leq \bar{A}_j(T)+o({\varDelta}x),
	\end{align*}
	where
	\begin{align*}
		\bar{A}_j(T)=\frac1{2{\varDelta}x}\int_{x_{j-1}}
		^{x_{j+1}}A(x,T)dx
	\end{align*}
	and
	\begin{align}
		w&(E_j^{n+1}(\rho^{\varDelta}_{\dagger}),E_j^{n+1}(m^{\varDelta}_{\dagger}))\nonumber\\
		&=E_j^{n+1}(m^{\varDelta}_{\dagger})/E_j^{n+1}(\rho^{\varDelta}_{\dagger})+\{E_j^{n+1}(\rho^{\varDelta}_{\dagger})\}^{\theta}
		/\theta\nonumber\\
		&=\frac{\displaystyle{\frac1{2{\varDelta}x}\int_{x_{j-1}}
				^{x_{j+1}}\hspace{-0.6ex}\hat{m}(x,T)\left\{A(x,T)\right\}^{\frac{\gamma+1}{\gamma-1}}dx}
			+{\left(\frac1{2{\varDelta}x}\int_{x_{j-1}}
				^{x_{j+1}}\hspace{-0.6ex}\hat{\rho}(x,T)
				\left\{A(x,T)\right\}^{\frac{2}{\gamma-1}}
				dx\right)^{\theta+1}}\hspace{-3ex}/{\theta}}
		{\displaystyle{\frac1{2{\varDelta}x}\int_{x_{j-1}}^{x_{j+1}}\hat{\rho}(x,T)	\left\{A(x,T)\right\}^{\frac{2}{\gamma-1}}dx}}.
		\nonumber\\
		\label{lemma3.1-2}
	\end{align}

	{\it Step 1.}\\
	We find
	\begin{align*}
		E_j^{n+1}(\rho^{\varDelta}_{\dagger})&=\frac1{2{\varDelta}x}\int_{x_{j-1}}
		^{x_{j+1}}\hat{\rho}(x,T)\left\{A(x,T)\right\}^{\frac{\gamma+1}{\gamma-1}}\left\{A(x,T)\right\}^{-1}dx\\
		&=\left\{\bar{A}_j(T)\right\}^{-1}
		\frac1{2{\varDelta}x}\int_{x_{j-1}}
		^{x_{j+1}}\hat{\rho}(x,T)\left\{A(x,T)\right\}^{\frac{\gamma+1}{\gamma-1}}dx\\
		&\quad+\frac1{2{\varDelta}x}\int_{x_{j-1}}
		^{x_{j+1}}\hat{\rho}(x,T)\left\{A(x,T)\right\}^{\frac{\gamma+1}{\gamma-1}}
		\times\left(\left\{A(x,T)\right\}^{-1}-\left\{\bar{A}_j(T)\right\}^{-1}    \right)dx
		\\
		&=\left\{\bar{A}_j(T)\right\}^{-1}
		\frac1{2{\varDelta}x}\int_{x_{j-1}}
		^{x_{j+1}}\hat{\rho}(x,T)\left\{A(x,T)\right\}^{\frac{\gamma+1}{\gamma-1}}dx
		\\&\quad-\left\{\bar{A}_j(T)\right\}^{-1}\frac1{2{\varDelta}x}\int_{x_{j-1}}
		^{x_{j+1}}\hat{\rho}(x,T)\left\{A(x,T)\right\}^{\frac{2}{\gamma-1}}
		r(x,T)dx+o({\varDelta}x),
	\end{align*}
	where $\displaystyle r(x,T)=A(x,T)-\bar{A}_j(T)$. Recalling \eqref{def-A}, we notice that 
	$\displaystyle r(x,T)=O({\varDelta}x)$.

	Substituting the above equation for (\ref{lemma3.1-2}), we obtain
	\begin{align}
		\lefteqn{w(E_j^{n+1}(\rho^{\varDelta}_{\dagger}),E_j^{n+1}(m^{\varDelta}_{\dagger}))}\nonumber\\
		=&\frac{\displaystyle{\frac1{2{\varDelta}x}\int_{x_{j-1}}
				^{x_{j+1}}\hspace{-0.6ex}\hat{m}(x,T)\left\{A(x,T)\right\}^{\frac{\gamma+1}{\gamma-1}}dx}
			+{\left(\frac1{2{\varDelta}x}\int_{x_{j-1}}
				^{x_{j+1}}\hspace{-0.6ex}\hat{\rho}(x,T)
				\left\{A(x,T)\right\}^{\frac{2}{\gamma-1}}
				dx\right)^{\theta+1}}\hspace{-3ex}/{\theta}}
		{\displaystyle{\left\{\bar{A}_j(T)\right\}^{-1}
				\frac1{2{\varDelta}x}\int_{x_{j-1}}
				^{x_{j+1}}\hat{\rho}(x,T)\left\{A(x,T)\right\}^{\frac{\gamma+1}{\gamma-1}}dx}
		}\nonumber\\
		&+\frac{\displaystyle{\frac1{2{\varDelta}x}\int_{x_{j-1}}
				^{x_{j+1}}\hspace{-0.6ex}\hat{m}(x,T)\left\{A(x,T)\right\}^{\frac{\gamma+1}{\gamma-1}}dx}
			+{\left(\frac1{2{\varDelta}x}\int_{x_{j-1}}
				^{x_{j+1}}\hspace{-0.6ex}\hat{\rho}(x,T)
				\left\{A(x,T)\right\}^{\frac{2}{\gamma-1}}
				dx\right)^{\theta+1}}\hspace{-3ex}/{\theta}}
		{\displaystyle{\left(
				\frac1{2{\varDelta}x}\int_{x_{j-1}}
				^{x_{j+1}}\hat{\rho}(x,T)\left\{A(x,T)\right\}^{\frac{2}{\gamma-1}}dx \right)^2
			}
		}
		\nonumber\\
		&\times \left\{\bar{A}_j(T)\right\}^{-1}\frac1{2{\varDelta}x}\int_{x_{j-1}}
		^{x_{j+1}}\hat{\rho}(x,T)\left\{A(x,T)\right\}^{\frac{2}{\gamma-1}}
		r(x,T)dx
		+o({\varDelta}x).
		\label{lemma3.1-5}
	\end{align}
	
	Set
	\begin{align}
		\lefteqn{\omega:=\frac{2}{\gamma+1}\frac1{\displaystyle{\left(
					\frac1{2{\varDelta}x}\int_{x_{j-1}}
					^{x_{j+1}}\hspace{-0.6ex}\hat{\rho}(x,T)
					\left\{A(x,T)\right\}^{\frac{2}{\gamma-1}}
					dx
					\right)^{\theta}}}}\nonumber\\
		&\times
		\frac{\displaystyle{\frac1{2{\varDelta}x}\int_{x_{j-1}}
				^{x_{j+1}}\hspace{-0.6ex}\hat{m}(x,T)\left\{A(x,T)\right\}^{\frac{\gamma+1}{\gamma-1}}dx}
			+{\left(\frac1{2{\varDelta}x}\int_{x_{j-1}}
				^{x_{j+1}}\hspace{-0.6ex}\hat{\rho}(x,T)
				\left\{A(x,T)\right\}^{\frac{2}{\gamma-1}}
				dx\right)^{\theta+1}}\hspace{-3ex}/{\theta}}
		{\displaystyle{
				\frac1{2{\varDelta}x}\int_{x_{j-1}}
				^{x_{j+1}}\hat{\rho}(x,T)\left\{A(x,T)\right\}^{\frac{2}{\gamma-1}}dx}
		}
		.
		\label{mu}
	\end{align}
	Then assume that the following holds.
	\begin{align}
		(E_j^{n+1}(\rho^{\varDelta}_{\dagger}))^{\theta+1}
		&\leq
		\frac1{2{\varDelta}x}\int_{x_{j-1}}
		^{x_{j+1}}(\hat{\rho}(x,T))^{\theta+1}
		\left\{A(x,T)\right\}^{\frac{\gamma+1}{\gamma-1}}dx\nonumber\\
		&\quad-\frac{\gamma+1}{2}\omega\left\{\bar{A}_j(T)\right\}^{-1}\left(
		\frac1{2{\varDelta}x}\int_{x_{j-1}}
		^{x_{j+1}}\hat{\rho}(x,T)\left\{A(x,T)\right\}^{\frac{2}{\gamma-1}}dx       \right)^{\theta}\nonumber\\
		&\quad\times\left(	\frac1{2{\varDelta}x}\int_{x_{j-1}}
		^{x_{j+1}}\hat{\rho}(x,T)
		\left\{A(x,T)\right\}^{\frac{2}{\gamma-1}}r(x,T)dx\right.\nonumber\\&\left.
		\quad-\frac1{2{\varDelta}x}\int_{x_{j-1}}
		^{x_{j+1}}\hat{\rho}(x,T)
		\left\{A(x,T)\right\}^{\frac{2}{\gamma-1}}dx\frac1{2{\varDelta}x}\int_{x_{j-1}}
		^{x_{j+1}}r(x,T)dx\right)
		\nonumber\\
		&\quad+o({\varDelta}x)\frac1{2{\varDelta}x}\int_{x_{j-1}}
		^{x_{j+1}}\hat{\rho}(x,T)
		\left\{A(x,T)\right\}^{\frac{2}{\gamma-1}}dx.\label{lemma3.1-13}
	\end{align}
	
	This estimate shall be proved in step 2--4.
	Then, substituting (\ref{lemma3.1-13}) for (\ref{lemma3.1-5}),
	we deduce from \eqref{appendix2} that
	\begin{align*}
		w(E_j^{n+1}(\bar{\rho}),E_j^{n+1}(m^{\varDelta}_{\dagger}))
		\leq&\frac{\displaystyle{\frac1{2{\varDelta}x}\int_{x_{j-1}}
				^{x_{j+1}}\hspace{-1ex}\hat{\rho}(x,T)\left\{A(x,T)\right\}^{\frac{\gamma+1}{\gamma-1}}
				\hspace{-1ex}
				\left[\hat{v}(x,T)+\frac{\{\hat{\rho}(x,T)\}^{\theta}}{\theta}\right]dx}
		}{\displaystyle{\left\{\bar{A}_j(T)\right\}^{-1}\frac1{2{\varDelta}x}\int_{x_{j-1}}^{x_{j+1}}\hat{\rho}(x,T)\left\{A(x,T)\right\}^{\frac{\gamma+1}{\gamma-1}}dx}}\hspace{-0.5ex}
		\\&+o({\varDelta}x)
		\\
		\leq&\bar{A}_j(T)+o({\varDelta}x).
	\end{align*}

	Therefore we must prove (\ref{lemma3.1-13}).
	Separating three steps, we derive this estimate.

	{\it Step 2.}\\
	From \eqref{assumption-average2},
	we notice that 
	\begin{align*}
		|\omega|\leq{C}({\varDelta}x)^{-\theta\delta-\varepsilon},
	\end{align*}
	where $C$ depends only on $M$.

	In this step, we consider the first equation of (\ref{lemma3.1-5}):
	\begin{align*}{\left(\frac1{2{\varDelta}x}\int_{x_{j-1}}
			^{x_{j+1}}\hspace{-0.6ex}\hat{\rho}(x,T)
			\left\{A(x,T)\right\}^{\frac{2}{\gamma-1}}
			dx\right)^{\theta+1}}.
	\end{align*}
	Since $\theta\delta<1/2$, we first find 
	\begin{align*}
		E_j^{n+1}(\rho^{\varDelta}_{\dagger})
		=&\frac1{2{\varDelta}x}\int_{x_{j-1}}
		^{x_{j+1}}\hat{\rho}(x,T)\left\{A(x,T)\right\}^{\omega+\frac{2}{\gamma-1}}
		\left\{A(x,T)\right\}^{-\omega}dx\\
		=&\left\{\bar{A}_j(T)\right\}^{-\omega}
		\frac1{2{\varDelta}x}\int_{x_{j-1}}
		^{x_{j+1}}\hat{\rho}(x,T)\left\{A(x,T)\right\}^{\omega+\frac{2}{\gamma-1}}dx\\
		&-\omega
		\left\{\bar{A}_j(T)\right\}^{-\omega-1}\frac1{2{\varDelta}x}\int_{x_{j-1}}
		^{x_{j+1}}\hat{\rho}(x,T)
		\left\{A(x,T)\right\}^{\omega+\frac{2}{\gamma-1}}r(x,T)
		dx\\
		&+o({\varDelta}x)\frac1{2{\varDelta}x}\int_{x_{j-1}}
		^{x_{j+1}}\hat{\rho}(x,T)
		\left\{A(x,T)\right\}^{\frac{2}{\gamma-1}}dx\\
		:=&I_0-I_1+I_2.
	\end{align*}

	We next estimate $I_1$ as follows:
	\begin{align*}
		I_1&=\omega
		\left\{\bar{A}_j(T)\right\}^{-1}\frac1{2{\varDelta}x}\int_{x_{j-1}}
		^{x_{j+1}}\hat{\rho}(x,T)
		\left\{A(x,T)\right\}^{\frac{2}{\gamma-1}}r(x,T)dx
		\nonumber\\
		&\quad+o({\varDelta}x)\frac1{2{\varDelta}x}\int_{x_{j-1}}
		^{x_{j+1}}\hat{\rho}(x,T)\left\{A(x,T)\right\}^{\frac{2}{\gamma-1}}
		dx.
	\end{align*}
	Therefore, we have
	\begin{align*}
		\begin{split}
			E_j^{n+1}(\rho^{\varDelta}_{\dagger})
			=&\frac1{2{\varDelta}x}\int_{x_{j-1}}
			^{x_{j+1}}\hat{\rho}(x,T)\left\{A(x,T)\right\}^{\omega+\frac{2}{\gamma-1}}
			\left\{A(x,T)\right\}^{-\omega}dx\\
			=&\left\{\bar{A}_j(T)\right\}^{-\omega}
			\frac1{2{\varDelta}x}\int_{x_{j-1}}
			^{x_{j+1}}\hat{\rho}(x,T)\left\{A(x,T)\right\}^{\omega+\frac{2}{\gamma-1}}dx\\
			&-\omega
			\left\{\bar{A}_j(T)\right\}^{-1}\frac1{2{\varDelta}x}\int_{x_{j-1}}
			^{x_{j+1}}\hat{\rho}(x,T)
			\left\{A(x,T)\right\}^{\frac{2}{\gamma-1}}r(x,T)dx\\
			&+o({\varDelta}x)\frac1{2{\varDelta}x}\int_{x_{j-1}}
			^{x_{j+1}}\hat{\rho}(x,T)
			\left\{A(x,T)\right\}^{\frac{2}{\gamma-1}}dx.	
		\end{split}
	\end{align*}
	
	From the above, 
	we deduce that 
	\begin{align}
		(E_j^{n+1}(\rho^{\varDelta}_{\dagger}))^{\theta+1}
		&=\left(\left\{\bar{A}_j(T)\right\}^{-\omega}
		\frac1{2{\varDelta}x}\int_{x_{j-1}}
		^{x_{j+1}}\hat{\rho}(x,T)\left\{A(x,T)\right\}^{\omega+\frac{2}{\gamma-1}}dx\right.\nonumber\\
		&\quad\left.
		-\omega
		\left\{\bar{A}_j(T)\right\}^{-1}\frac1{2{\varDelta}x}\int_{x_{j-1}}
		^{x_{j+1}}\hat{\rho}(x,T)
		\left\{A(x,T)\right\}^{\frac{2}{\gamma-1}}r(x,T)dx
		\right)^{\theta+1}\nonumber\\
		&\quad+o({\varDelta}x)\frac1{2{\varDelta}x}\int_{x_{j-1}}
		^{x_{j+1}}\hat{\rho}(x,T)
		\left\{A(x,T)\right\}^{\frac{2}{\gamma-1}}dx
		\nonumber
	\end{align}\begin{align}
		&=\left(
		\left\{\bar{A}_j(T)\right\}^{-\omega}
		\frac1{2{\varDelta}x}\int_{x_{j-1}}
		^{x_{j+1}}\hat{\rho}(x,T)\left\{A(x,T)\right\}^{\omega+\frac{2}{\gamma-1}}dx
		\right)
		^{\theta+1}\nonumber\\
		&\quad+(\theta+1)\left(
		\left\{\bar{A}_j(T)\right\}^{-\omega}
		\frac1{2{\varDelta}x}\int_{x_{j-1}}
		^{x_{j+1}}\hat{\rho}(x,T)\left\{A(x,T)\right\}^{\omega+\frac{2}{\gamma-1}}dx       \right)^{\theta}\nonumber\\
		&\quad\times-\omega
		\left\{\bar{A}_j(T)\right\}^{-1}\frac1{2{\varDelta}x}\int_{x_{j-1}}
		^{x_{j+1}}\hat{\rho}(x,T)
		\left\{A(x,T)\right\}^{\frac{2}{\gamma-1}}r(x,T)dx
		\nonumber\\
		&\quad+o({\varDelta}x)\frac1{2{\varDelta}x}\int_{x_{j-1}}
		^{x_{j+1}}\hat{\rho}(x,T)
		\left\{A(x,T)\right\}^{\frac{2}{\gamma-1}}dx
		\nonumber
		\\
		&=\left(
		\left\{\bar{A}_j(T)\right\}^{-\omega}
		\frac1{2{\varDelta}x}\int_{x_{j-1}}
		^{x_{j+1}}\hat{\rho}(x,T)\left\{A(x,T)\right\}^{\omega+\frac{2}{\gamma-1}}dx
		\right)
		^{\theta+1}\nonumber\\
		&\quad-\frac{\gamma+1}{2}\omega
		\left\{\bar{A}_j(T)\right\}^{-1}\left(
		\frac1{2{\varDelta}x}\int_{x_{j-1}}
		^{x_{j+1}}\hat{\rho}(x,T)\left\{A(x,T)\right\}^{\frac{2}{\gamma-1}}dx       \right)^{\theta}\nonumber\\
		&\quad\times\frac1{2{\varDelta}x}\int_{x_{j-1}}
		^{x_{j+1}}\hat{\rho}(x,T)
		\left\{A(x,T)\right\}^{\frac{2}{\gamma-1}}r(x,T)dx
		\nonumber\\
		&\quad+o({\varDelta}x)\frac1{2{\varDelta}x}\int_{x_{j-1}}
		^{x_{j+1}}\hat{\rho}(x,T)
		\left\{A(x,T)\right\}^{\frac{2}{\gamma-1}}dx
		.
		\label{lemma3.1-9}
	\end{align}

	{\it Step 3}\\
	Applying the Jensen inequality to the first term of the right-hand of (\ref{lemma3.1-9}), we have
	\begin{align}
		&\left(
		\left\{\bar{A}_j(T)\right\}^{-\omega}
		\frac1{2{\varDelta}x}\int_{x_{j-1}}
		^{x_{j+1}}\hat{\rho}(x,T)\left\{A(x,T)\right\}^{\omega+\frac{2}{\gamma-1}}dx
		\right)
		^{\theta+1}\nonumber
		\\
		&=\left(\frac{\displaystyle{	\left\{\bar{A}_j(T)\right\}^{-\omega}
				\frac1{2{\varDelta}x}\int_{x_{j-1}}
				^{x_{j+1}}\hat{\rho}(x,T)\left\{A(x,T)\right\}^
				{\omega+\frac{2}{\gamma-1}}
				dx}}
		{\displaystyle{\frac1{2{\varDelta}x}\int_{x_{j-1}}
				^{x_{j+1}}\left\{A(x,T)\right\}^{\frac{\gamma+1}{\gamma-1}\omega}dx}}\right)^{\theta+1}
		\nonumber
		\\
		&\quad\times\left(\frac1{2{\varDelta}x}\int_{x_{j-1}}
		^{x_{j+1}}\left\{A(x,T)\right\}^{\frac{\gamma+1}{\gamma-1}\omega}dx\right)^{\theta+1}
		\nonumber
		\\
		&=\left(\frac{\displaystyle{	\left\{\bar{A}_j(T)\right\}^{-\omega}
				\frac1{2{\varDelta}x}\int_{x_{j-1}}
				^{x_{j+1}}\hat{\rho}(x,T)\left\{A(x,T)\right\}^
				{\omega+\frac{2}{\gamma-1}}
				dx}}
		{\displaystyle{\frac1{2{\varDelta}x}\int_{x_{j-1}}
				^{x_{j+1}}\left\{A(x,T)\right\}^{\frac{\gamma+1}{\gamma-1}\omega}dx}}\right)^{\theta+1}
		\nonumber\\
		&\quad\times\left(\frac1{2{\varDelta}x}\int_{x_{j-1}}
		^{x_{j+1}}\left\{A(x,T)\right\}^{\frac{\gamma+1}{\gamma-1}\omega}dx\right)
		\nonumber\\
		&\quad\times\left(\left\{\bar{A}_j(T)\right\}^{\frac{\gamma+1}{2}\omega}+
		\frac{\gamma+1}{2}\omega\left\{\bar{A}_j(T)\right\}^{\frac{\gamma+1}{2}\omega-1}\frac1{2{\varDelta}x}\int_{x_{j-1}}
		^{x_{j+1}}r(x,T)dx+o({\varDelta}x)\right)
		\nonumber
	\end{align}\begin{align}
		&=\left(\frac{\displaystyle{	\left\{\bar{A}_j(T)\right\}^{-\omega}
				\frac1{2{\varDelta}x}\int_{x_{j-1}}
				^{x_{j+1}}\hat{\rho}(x,T)\left\{A(x,T)\right\}^
				{\omega-\frac{\gamma+1}{\gamma-1}\omega+\frac{2}{\gamma-1}}
				\left\{A(x,T)\right\}^{\frac{\gamma+1}{\gamma-1}\omega}dx}}
		{\displaystyle{\frac1{2{\varDelta}x}\int_{x_{j-1}}
				^{x_{j+1}}\left\{A(x,T)\right\}^{\frac{\gamma+1}{\gamma-1}\omega}dx}}\right)^{\theta+1}
		\nonumber\\
		&\quad\times\left(\frac1{2{\varDelta}x}\int_{x_{j-1}}
		^{x_{j+1}}\left\{A(x,T)\right\}^{\frac{\gamma+1}{\gamma-1}\omega}dx\right)\left\{\bar{A}_j(T)\right\}^{\frac{\gamma+1}{2}\omega}
		\nonumber\\
		&\quad
		+o({\varDelta}x){\displaystyle{	
				\frac1{2{\varDelta}x}\int_{x_{j-1}}
				^{x_{j+1}}\hat{\rho}(x,T)\left\{A(x,T)\right\}^
				{\frac{2}{\gamma-1}}
				dx}}
		\nonumber
		\\
		&\leq
		\frac1{2{\varDelta}x}\int_{x_{j-1}}
		^{x_{j+1}}(\hat{\rho}(x,T))^{\theta+1}
		\left\{A(x,T)\right\}^{\frac{\gamma+1}{\gamma-1}}dx\nonumber\\&\quad
		+o({\varDelta}x){\displaystyle{	
				\frac1{2{\varDelta}x}\int_{x_{j-1}}
				^{x_{j+1}}\hat{\rho}(x,T)\left\{A(x,T)\right\}^
				{\frac{2}{\gamma-1}}
				dx}}.
		\label{lemma3.1-10}
	\end{align}

	From (\ref{lemma3.1-9}) and (\ref{lemma3.1-10}), we obtain 
	(\ref{lemma3.1-13}) and complete the proof of lemma \ref{lem:average1}.
\end{proof}

\vspace*{-2.0ex}
\section{Construction and $L^{\infty}$ estimates of approximate solutions near the vacuum in Case 1}\label{app:vacuum}

In this step, we consider the case where $\rho_{\rm M}\leq({\varDelta}x)^{\beta}$,
which means that $u_{\rm M}$ is near the vacuum. Since we cannot use the implicit 
function theorem, we must construct $u^{\varDelta}(x,t)$ in a different way.


\vspace*{5pt}
{\bf Case 1} A 1-rarefaction wave and a 2-shock arise.

In this case, we notice that $\rho_{\rm R}\leq ({\varDelta}x)^{\beta},\;
z_{\rm R}\geq -M_n-L_n+I^n_j$ and $w_{\rm R}\leq  M_n+L_n+I^n_j$.
\vspace*{5pt}

\vspace*{5pt}
{\bf Case 1.1}
$\rho_{\rm L}>({\varDelta}x)^{\beta}$

We denote $u^{(1)}_{\rm L}$ a state satisfying $ w(u_{\rm L}^{(1)})=w(u_{\rm L})$ and 
$\rho^{(1)}_{\rm L}=({\varDelta}x)^{\beta}$. 
Let $u^{(2)}_{\rm L}$ be a state connected to ${u}^{\varDelta}_1(x_{j-1},t_{n+1-})$ on the right by 
$R_1^{\varDelta}(u_{\rm L},z^{(1)}_{\rm L},x,t_{n+1-})$. We set 
\begin{align*}
	(z^{(3)}_{\rm L},w^{(3)}_{\rm L})=
	\begin{cases}
		(z^{(2)}_{\rm L},w^{(2)}_{\rm L}),\quad\text{if $z^{(2)}_{\rm L}\geq D^n_j$},\\
		(D^n_j,w^{(2)}_{\rm L}),\quad\text{if $z^{(2)}_{\rm L}< D^n_j$},
	\end{cases}
\end{align*}
where 
\begin{align*}
	D^n_j=&-M_{n+1}-L_n+\int^{x_{j-1}}_{-\infty}J({u}^{\varDelta}_{n,0}(x))dx+V(u_{\rm L}){\varDelta}t+
	\int^{x_{j+1}}_{x_{j-1}}\dfrac{\left(\bar{\rho}\right)^{\gamma}}{\gamma}dx\\
	&+
	\int^{x_{j}+\lambda_1(u^{(2)}_{\rm L}){\varDelta}t}_{x_{j-1}}\eta(R_1^{\varDelta}(u_{\rm L},z^{(1)}_{\rm L},x,t_{n+1-}))dx.
\end{align*}



Then, we define ${u}^{\varDelta}(x,t)$ as follows.
\begin{align*}{u}^{\varDelta}(x,t)=
	\begin{cases}
		R_1^{\varDelta}(u_{\rm L},z^{(1)}_{\rm L},x,t),\hspace*{2ex}\text{if $x_{j-1}
			\leq{x}\leq x_{j}+\lambda_1(u^{(2)}_{\rm L})(t-{t}_{n})$}\\
		\hspace*{19ex}\text{ and ${t}_{n}\leq{t}<{t}_{n+1}$,}\vspace*{1ex}\\
		u_{\rm Rw}(x,t),\hspace*{2ex}\text{if $ x_{j}+\lambda_1(u^{(2)}_{\rm L})(t-{t}_{n})$$<x
			\leq x_{j}+\lambda_2(u_{\rm M},u_{\rm R})(t-{t}_{n})$}\\\hspace*{11ex}\text{ and ${t}_{n}\leq{t}<{t}_{n+1}$,}\vspace*{1ex}\\
		{u}^{\varDelta}_{\rm R}(x,t) \text{ defined in \eqref{appr-R}},\hspace*{2ex}\text{if $x_{j}+\lambda_2(u_{\rm M},u_{\rm R})(t-{t}_{n})$$<x
			\leq x_{j+1}$ }\\\hspace*{28ex}\text{and ${t}_{n}\leq{t}<{t}_{n+1}$,}
	\end{cases}
\end{align*}
where (a) $\lambda_2(u_{\rm M},u_{\rm R})$ is a propagation speed of 2-shock wave; 
(b) $u_{\rm Rw}(x,t)$ is a\linebreak rarefaction wave connecting $u^{(3)}_{\rm L}$ and $u^{(4)}_{\rm L}$; (c) $u^{(4)}_{\rm L}$ is defined by $z^{(4)}_{\rm L}=\max\{z^{(3)}_{\rm L},z_{\rm M}\},$\linebreak$w^{(4)}_{\rm L}=w^{(3)}_{\rm L}$.

\begin{figure}[htbp]
	\begin{center}
		\vspace{-1ex}
		\hspace{2ex}
		\includegraphics[scale=0.3]{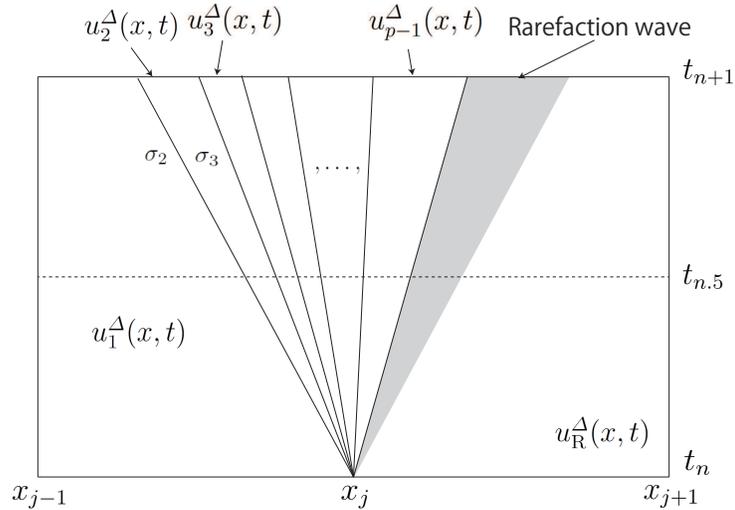}
	\end{center}\vspace*{-2ex}
	\caption{{\bf Case 1.1}: The approximate solution ${u}^{\varDelta}$ in the cell.}
	\label{Fig:case1.1}
\end{figure}

\vspace*{2ex}
{\bf Case 1.2} $\rho_{\rm L}\leq({\varDelta}x)^{\beta}$

We set $(z^{(5)}_{\rm L},w^{(5)}_{\rm L})=(\max\{z_{\rm L},D^n_j\},
\min\{w_{\rm L},U^n_j\})$,
where 
\begin{align*}
	U^n_j=&M_{n+1}+L_n+\int^{x_{j-1}}_{-\infty}J({u}^{\varDelta}_{n,0}(x))dx+V(u_{\rm L}){\varDelta}t.
\end{align*}

Then, we define ${u}^{\varDelta}(x,t)$ as follows.
\begin{align*}{u}^{\varDelta}(x,t)=
	\begin{cases}
		{u}^{\varDelta}_1(x,t)\text{ defined in \eqref{appro1}},\hspace*{2ex}\text{if $x_{j-1}
			\leq{x}\leq x_{j}+\lambda_1(u_{\rm L})(t-{t}_{n})$}\\
		\hspace*{28ex}\text{ and ${t}_{n}\leq{t}<{t}_{n+1}$,}\vspace*{1ex}\\
		u_{\rm Rw}(x,t),\hspace*{2ex}\text{if $ x_{j}+\lambda_1(u_{\rm L})(t-{t}_{n})$$<x
			\leq x_{j}+\lambda_2(u_{\rm M},u_{\rm R})(t-{t}_{n})$}\\\hspace*{11ex}\text{ and ${t}_{n}\leq{t}<{t}_{n+1}$,}\vspace*{1ex}\\
		{u}^{\varDelta}_{\rm R}(x,t) \text{ defined in \eqref{appr-R}},\hspace*{2ex}\text{if $x_{j}+\lambda_2(u_{\rm M},u_{\rm R})(t-{t}_{n})$$<x
			\leq x_{j+1}$ }\\\hspace*{28ex}\text{and ${t}_{n}\leq{t}<{t}_{n+1}$,}
	\end{cases}
\end{align*}
where (a) $u_{\rm Rw}(x,t)$ is a rarefaction wave connecting $u^{(5)}_{\rm L}$ and $u^{(6)}_{\rm L}$; (b) $u^{(6)}_{\rm L}$ is defined by $z^{(6)}_{\rm L}=\max\{z^{(5)}_{\rm L},z_{\rm M}\},\;
w^{(6)}_{\rm L}=w^{(5)}_{\rm L}$.

\begin{remark} 
	We notice that	
	${\rho}^{\varDelta}(x,t)=O(({\varDelta}x)^{\beta})$ in (1.ii), (1.iii) and (2.i)--(2.iii). 
	Therefore, the followings hold in these areas.

	Although (1.ii) and (2.ii) are solutions of homogeneous isentropic gas dynamics (i.e., 
	$g(x,t,u))=0$), they is also a solution of \eqref{IP} approximately
	\begin{align*}
		(u^{\varDelta})_t+f(u^{\varDelta})_x-g(x,u^{\varDelta})=-g(x,u^{\varDelta})=O(({\varDelta}x)^{\beta}).
	\end{align*}

	In addition, discontinuities separating (1.i)--(1.iii) and (2.i)--(2.iii) satisfy \cite[Lemma 5.3]{T1}. 
	
\end{remark}

\subsection{$L^{\infty}$ estimates of approximate solutions}

We consider {\bf Case 1.1} in particular. It suffices to treat with $u_{\rm Rw}(x,t)$ 
in the region where $ x_{j}+\lambda_1(u^{(2)}_{\rm L})(t-{t}_{n})<x
\leq x_{j}+\lambda_2(u_{\rm M},u_{\rm R})(t-{t}_{n})$ and ${t}_{n}\leq{t}<{t}_{n+1}$. 
The other cases are similar to Theorem \ref{thm:bound}.

In this case, since ${\rho}^{\varDelta}(x,t)=O(({\varDelta}x)^{\beta})$, we have 
\begin{align}
	\eta_{\ast}({u}^{\varDelta}(x,t))=O(({\varDelta}x)^{\beta}).
	\label{vacuum-eta}
\end{align}
Moreover, we notice that \begin{align*}{w}^{\varDelta}(x,t_{n+1-})=w^{(2)}_{\rm L}=w(R_1^{\varDelta}(u_{\rm L},z^{(1)}_{\rm L},x_{j}+\lambda_1(u^{(2)}_{\rm L}){\varDelta}t,t_{n+1-})).\end{align*}
Applying Theorem \ref{thm:bound} to $R_1^{\varDelta}(u_{\rm L},z^{(1)}_{\rm L},x,t_{n+1-})$, we drive 
\begin{align*}
	\begin{alignedat}{2}
		&\displaystyle {w}^{\varDelta}(x,t_{n+1-})
		&\leq& M_{n+1}+L_n+\int^{x_{j}+\lambda_1(u^{(2)}_{\rm L}){\varDelta}t}_{-\infty}J({u}^{\varDelta}(y,t_{n+1-}))dy\\&&&+\int^{t_{n+1}}_{t_n}\sum_{y<x_{j-1}}(\sigma[\eta_{\ast}]-[q_{\ast}])dt+{\it o}({\varDelta}x)\\
		&&\leq& M_{n+1}+L_n+\int^{x}_{-\infty}J({u}^{\varDelta}(y,t_{n+1-}))dy+\int^{t_{n+1}}_{t_n}\sum_{y<x_{j-1}}(\sigma[\eta_{\ast}]-[q_{\ast}])dt\\&&&+{\it o}({\varDelta}x),
	\end{alignedat}
\end{align*}
which means $\eqref{goal}_2$.

Next, we notice that ${z}^{\varDelta}(x,t)\geq D^n_j$. In view of \eqref{mass-conservation} and \eqref{vacuum-eta}, we obtain $\eqref{goal}_1$.


\section*{Acknowledgements.}
N. Tsuge's research is partially supported by Grant-in-Aid for Scientific Research (C) 17K05315, Japan. 
  
\end{document}